\documentclass[reqno,10pt]{amsart}
\usepackage[T1]{fontenc}
\usepackage{graphicx}
\usepackage{mathtools}
\usepackage{amssymb}
\usepackage{amsthm}
\usepackage{thmtools}
\usepackage{mathrsfs}
\usepackage[mathscr]{euscript}
\usepackage{bigints}
\usepackage[dvipsnames]{xcolor}
\usepackage{tikz-cd}
\usepackage{nameref}
\usepackage{ dsfont }
\usepackage{hyperref}
\usepackage {amssymb,latexsym,amsthm,amsmath,amsfonts, mathtools, multirow, longtable}
\usepackage{enumitem,color}
\usepackage{amsmath,tikz-cd}

\usepackage[left=2cm, right=2cm, top=2cm, bottom=2cm]{geometry}

\usepackage{amsmath,amssymb,latexsym}

\usepackage{hyperref}
\usepackage{color}
\usepackage[all]{xy}
\pagestyle{plain}

\hypersetup{
	colorlinks=true, 
	linktoc=all,     
	linkcolor=blue,  
}

\newcommand{\zp}{\mathbb{Z}_{p}}
\newcommand{\qp}{\mathbb{Q}_{p}}

\newcommand{\q}{\mathbb{Q}}
\newcommand{\z}{\mathbb{Z}}

\newcommand{\A}{\mathbb{A}}
\newcommand{\h}{\mathbf{h}}
\newcommand{\I}{\mathbb{I}}
\newcommand{\f}{\mathbf{f}}
\newcommand{\T}{\mathbf{T}}
\newcommand{\mcL}{\mathcal{L}}
\newcommand{\mcO}{\mathcal{O}}
\newcommand{\mcA}{\mathcal{A}}
\newcommand{\mck}{\mathcal{K}}
\newcommand{\ma}{\mathfrak{a}}
\newcommand{\p}{\mathfrak{p}}

\newcommand{\n}{\mathfrak{n}}
\newcommand{\ml}{\mathfrak{l}}
\newcommand{\ip}{\I_P}

\newcommand{\kinf}{K_\infty}
\newcommand{\kip}{K_\infty^+}
\newcommand{\ikinf}{\I_{K_\infty}}
\newcommand{\ikip}{\I_{K_\infty^+}}
\newcommand{\vp}{\varpi}
\newcommand{\ox}{\overline{x}}
\newcommand{\oy}{\overline{y}}
\newcommand{\orhof}{\overline{\rho}_\f}
\newcommand{\wx}{\widetilde{x}}
\newcommand{\wy}{\widetilde{y}}

\newcommand{\tp}{\widetilde{P}}
\newcommand{\wq}{\widetilde{Q}}
\newcommand{\tfp}{\mathbf{T}_{\mathbf{f},P}}
\newcommand{\Varpi}{\rotatebox[origin=c]{180}{$\Pi\kern-0.361em\Pi$}}

\theoremstyle{plain}
\newtheorem{thm}{Theorem}[section]
\newtheorem{lem}[thm]{Lemma}
\newtheorem{prop}[thm]{Proposition}
\newtheorem{cor}[thm]{Corollary}

\newtheorem{rem}[thm]{Remark}

\theoremstyle{definition}
\newtheorem{defn}{Definition}

\newtheorem*{acknowledgements}{Acknowledgments}

\title{A $p$-Converse theorem for Real Quadratic Fields}
\begin{document}

\author{Muskan Bansal$^{1,2}$} 
  \address{$^1$ Harish-Chandra Research Institute, Chhatnag Road, Jhunsi, Prayagraj - 211019, India}
  \address{$^2$Homi Bhabha National Institute, Training School Complex, Anushakti Nagar, Mumbai 400094, India}
\email{}

    \author{Somnath Jha$^3$} 
    \address{$^3$ Depratment of Mathematics, Indian Institute of Technology Kanpur,  Kanpur - 208016, India}
	\email{}
    
\author{Aprameyo Pal$^{1,2}$} 
   
	\email{}

    \author{Guhan Venkat$^4$} 
    \address{$^4$ Department of Mathematics, Ashoka University, Rajiv Gandhi Education City, Haryana - 131029,
    India.}
	\email{muskanbansal@hri.res.in, jhasom@iitk.ac.in, aprameyopal@hri.res.in, guhan.venkat@ashoka.edu.in}

\keywords{Elliptic curves, Selmer complexes, $p$-adic $L$-functions, $p$-converse to Gross-Zagier \& Kolyvagin theorem}
	\subjclass[2020]{Primary: 11G40, 11G05, 11R23}

\begin{abstract}
    Let $E$ be an elliptic curve defined over a real quadratic field $F$. Let $p > 5$ be a rational prime that is inert in $F$ and assume that $E$ has split multiplicative reduction at the prime $\p$ of $F$ dividing $p$. Let $\Varpi(E/F)$ denote the Tate-Shafarevich group of $E$ over $F$ and $ L(E/F,s) $ be the Hasse-Weil complex $L$-function of $E$ over $F$. In this setting, we establish a rank 1 $p$-converse theorem to Zhang's generalisation of Gross-Zagier and Kolyvagin theorem. More precisely, under some technical assumptions, we show that 
    \begin{align*}
        \mathrm{rank}_\z \hspace{0.01mm} \hspace{1mm} E(F) = 1 \text{ and } \hspace{1mm} \#(\Varpi(E/F)_{p^\infty}) < \infty \implies \mathrm{ord}_{s=1} \ L(E/F,s) = 1. 
    \end{align*}
We also indicate an application to a  $p$-converse to Gross-Zagier and Kolyvagin theorem in a parallel setting over $\q$. 
\end{abstract}

\maketitle
\tableofcontents

\section{Introduction}

Let $L$ be a number field and $E$ be an elliptic curve defined over $L$. Denote by $L(E/L,s)$ the Hasse-Weil complex $L$-function of $E$ over $L$ and by $\Varpi(E/L)$ the Tate-Shafarevich group of $E$ over $L$. It is expected that $L(E/L,s)$ can be analytically continued to all of $\mathbb C$  and it will satisfy a functional equation relating its values at $s$ and $2-s$. Further, a part of the Birch and Swinnerton-Dyer conjecture states that $\mathrm{rank}_\z \hspace{0.01mm} \hspace{1mm} E(L) = \mathrm{ord}_{s=1} L(E/L,s)$ and that $\Varpi(E/L)$ is finite. For $L=\q$, combining the celebrated works of Gross-Zagier   \cite{GrossZagier} and Kolyvagin \cite{Kolyvagin}, one has the following result.   
\begin{thm}\label{GZK}
   Let $E$ be an elliptic curve defined over $\q$. Then, for $r\in \{0,1\},$ 
   \begin{align*} \mathrm{ord}_{s=1} L(E/\q,s) =r \implies \mathrm{rank}_\z \hspace{0.01mm} \hspace{1mm} E(\q) = r \text{ and } \Varpi(E/\q) \text{ is finite.}\end{align*}
\end{thm}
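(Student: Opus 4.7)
The plan is to pass through an auxiliary imaginary quadratic field $K/\q$ satisfying the \emph{Heegner hypothesis} with respect to the conductor $N$ of $E$, namely that every rational prime dividing $N$ splits in $K$. The modularity theorem for elliptic curves over $\q$ provides a modular parametrization $\pi \colon X_0(N) \to E$, along which one transports CM points to obtain the basic Heegner point $y_K \in E(K)$. Under the Heegner hypothesis the root number of $L(E/K,s)$ equals $-1$, so this $L$-function vanishes to odd order at $s=1$.

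For the case $r=1$: the factorization $L(E/K,s) = L(E/\q,s) \cdot L(E^{\chi_K}/\q,s)$ shows that to achieve $\operatorname{ord}_{s=1} L(E/K,s) = 1$ one needs $L(E^{\chi_K}/\q,1) \neq 0$. The analytic non-vanishing theorems of Waldspurger, Bump--Friedberg--Hoffstein and Murty--Murty guarantee the existence of such a $K$ among those satisfying the Heegner hypothesis. The Gross--Zagier height formula then yields
\[
L'(E/K,1) \;=\; c \cdot \hat{h}_{E/K}(y_K)
\]
with a nonzero explicit constant $c$, so $y_K$ is non-torsion. Kolyvagin's Euler system of derived Heegner cohomology classes then forces $\operatorname{rank}_\z E(K) = 1$ and $\#\Varpi(E/K) < \infty$. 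Decomposing $E(K) \otimes \q$ under $\operatorname{Gal}(K/\q)$ into its $+$ and $-$ eigenspaces, which are $E(\q) \otimes \q$ and $E^{\chi_K}(\q) \otimes \q$ respectively, and noting that $L(E^{\chi_K},1) \neq 0$ forces $\operatorname{rank}_\z E^{\chi_K}(\q) = 0$, one concludes $\operatorname{rank}_\z E(\q) = 1$. Finiteness of $\Varpi(E/\q)$ then follows from that of $\Varpi(E/K)$ by a restriction-corestriction argument on the Selmer groups, controlled via the $\operatorname{Gal}(K/\q)$-eigenspace decomposition.

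For the case $r=0$: the strategy is symmetric. Here $L(E/\q,1) \neq 0$ by hypothesis, so the root number of $L(E/\q,s)$ is $+1$ and that of $L(E^{\chi_K}/\q,s)$ is $-1$; one selects $K$ so that $\operatorname{ord}_{s=1} L(E^{\chi_K}/\q,s) = 1$, again using the analytic non-vanishing results applied to the twist. The same Gross--Zagier--Kolyvagin package yields $\operatorname{rank}_\z E(K) = 1$, with the rank landing entirely in the $-$ eigenspace $E^{\chi_K}(\q) \otimes \q$, whence $\operatorname{rank}_\z E(\q) = 0$; finiteness of $\Varpi(E/\q)$ again descends from the corresponding statement over $K$.

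The principal obstacles are, first, the Gross--Zagier formula itself, whose proof requires a delicate comparison of arithmetic intersection numbers of Heegner divisors on the integral model of $X_0(N)$ with the Rankin--Selberg integral representation of $L'(E/K,s)$; and second, the construction and Euler system properties of Kolyvagin's derived cohomology classes, which require intricate local duality and ramification analyses at the Kolyvagin auxiliary primes. The analytic non-vanishing input for twisted $L$-values, while classical, is also a non-trivial ingredient. Modularity of $E$ (Wiles, Taylor--Wiles, Breuil--Conrad--Diamond--Taylor) is used throughout as a foundational black box.
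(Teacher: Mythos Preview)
Your outline is a correct sketch of the classical argument, and it is essentially the same approach as what the paper has in mind: the paper does not give its own proof of this statement but simply attributes it to Gross--Zagier and Kolyvagin, and your proposal is precisely the standard route through an auxiliary imaginary quadratic field satisfying the Heegner hypothesis, the Gross--Zagier height formula, Kolyvagin's Euler system, and the non-vanishing results for twisted $L$-values.
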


For a totally real number field $L$, under suitable hypotheses, an analogue of Theorem \ref{GZK} has been established by  Zhang \cite{Zhang} (see \cite[Theorem A]{Zhang} for a precise statement).

Now fix once and for all, a real quadratic field $F$ of discriminant $D_F$. As $F$ is a real quadratic field, it is known due to \cite{FLHS} that an elliptic curve $E$ defined over $F$ is modular and hence $L(E/F,s)$ can be analytically continued to the entire complex plane. Further, in the above setting, the following result is a special case of \cite[Theorem A]{Zhang}:

\begin{thm}\label{ZhangMainResult}
  Let $E$ be an elliptic curve defined over a real quadratic field $F$. Let $\mathfrak q$ be a (finite) prime of $F$ and  assume that the conductor of $E/F$, $\n_E = \n\mathfrak q$ with $\mathfrak q \nmid \n$. Then, for $r\in \{0,1\},$ 
   \begin{align*} \mathrm{ord}_{s=1} L(E/F,s) =r \implies \mathrm{rank}_\z \hspace{0.01mm} \hspace{1mm} E(F) = r \text{ and } \Varpi(E/F) \text{ is finite.}\end{align*}
\end{thm}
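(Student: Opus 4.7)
The plan is to follow the Gross--Zagier--Kolyvagin paradigm in its totally real version, using the modularity of $E/F$ established by Freitas--Le Hung--Siksek and the assumption on the conductor $\n_E=\n\mathfrak q$ to set up a Shimura curve parametrisation. Since $\mathfrak q \| \n_E$, the associated Hilbert cuspidal newform $f$ on $\mathrm{GL}_2/F$ is Steinberg at $\mathfrak q$, so $f$ admits a Jacquet--Langlands transfer to an automorphic form $f^B$ on the quaternion algebra $B/F$ of our choice, provided the ramification set of $B$ consists of places at which $f$ is discrete series. The parity of $F$ being real quadratic allows us to pick $B$ ramified either at $\{\infty_1,\infty_2\}$ (totally definite, for the rank $0$ case) or at $\{\infty_1,\mathfrak q\}$ (indefinite with one split archimedean place, giving a Shimura curve $X^B/F$ for the rank $1$ case). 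In the latter case, the Jacobian of $X^B$ admits a surjection onto $E$ after Jacquet--Langlands, and we obtain a modular parametrisation through which CM points of $X^B$ yield Heegner points on $E$.

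\textbf{Case $r=0$.} Assuming $L(E/F,1)\neq 0$, I would first invoke non-vanishing of central values for quadratic twists (as in Friedberg--Hoffstein, extended to totally real fields) to produce an imaginary quadratic extension $K/F$ satisfying the Heegner hypothesis relative to $B=B_\infty$ totally definite and such that $L(E^K/F,1)\neq 0$; then $L(E/K,1)=L(E/F,1)L(E^K/F,1)\neq 0$. Waldspurger's formula in the explicit form of Gross (and its generalisation by Zhang to totally real $F$) expresses $L(E/K,1)$ as the square of a toric period of $f^B$ over the zero-dimensional Shimura set attached to $B$, so non-vanishing of this $L$-value produces a cohomology class (via the Heegner-type construction over ring class fields of $K$) whose non-triviality, fed into Kolyvagin's Euler system machinery for the anticyclotomic $\z_p$-extension, bounds both $\mathrm{Sel}_{p^\infty}(E/F)$ and $\Varpi(E/F)_{p^\infty}$, forcing $\mathrm{rank}_\z E(F)=0$ and finiteness of $\Varpi(E/F)$.

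\textbf{Case $r=1$.} Assuming $\mathrm{ord}_{s=1}L(E/F,s)=1$, I would this time apply non-vanishing of $L'$ in families of twists (Bump--Friedberg--Hoffstein, suitably upgraded) to pick $K/F$ imaginary quadratic with $L(E^K/F,1)\neq 0$ and all primes in $\n$ other than $\mathfrak q$ split in $K$, while $\mathfrak q$ is inert (the Heegner hypothesis for the indefinite $B$ ramified at $\{\infty_1,\mathfrak q\}$). The factorisation $L(E/K,s)=L(E/F,s)L(E^K/F,s)$ then gives $\mathrm{ord}_{s=1}L(E/K,s)=1$, and the Gross--Zagier--Zhang formula on $X^B$ equates $L'(E/K,1)$ to a Néron--Tate height of the Heegner point $y_K\in E(K)$ (up to non-zero constants). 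Hence $y_K$ is of infinite order, and Kolyvagin's descent --- in Nekov\'a\v{r}'s/Howard's/Zhang's formulation for Shimura curves over totally real fields --- produces an Euler system of Heegner points over ring class field extensions of $K$ whose derived classes control $\mathrm{Sel}_{p^\infty}(E/K)$, pinning $\mathrm{rank}_\z E(K)=1$ and $\Varpi(E/K)<\infty$, from which one descends to $F$ via the sign of the functional equation.

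The main obstacle is the simultaneous realisation of the auxiliary imaginary quadratic field $K/F$: one needs $K$ to satisfy the appropriate Heegner hypothesis adapted to the ramification of $B$ \emph{and} the analytic non-vanishing of the complementary twisted $L$-factor at $s=1$. Over $F=\q$ this is classical, but over a real quadratic base the relevant non-vanishing statements in quadratic families require more delicate inputs (relative trace formula or analytic methods à la Friedberg--Hoffstein--Waldspurger over totally real fields). Once such a $K$ is secured, the remainder is the substantial but largely machine-like task of implementing Kolyvagin's argument in the Hilbert modular setting, for which the ramification hypothesis $\mathfrak q\|\n_E$ conveniently ensures both the existence of the correct quaternionic Shimura curve and the applicability of the Euler system estimates to the prime $p$.
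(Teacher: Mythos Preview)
The paper does not prove this theorem: it is simply quoted as a special case of \cite[Theorem A]{Zhang} and used as an input for the converse direction. So there is no ``paper's own proof'' to compare against beyond the citation.

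Your sketch is essentially the Gross--Zagier--Kolyvagin strategy that underlies Zhang's result, so in spirit you are reconstructing the cited argument. One point of divergence worth flagging: in the $r=0$ case, Zhang does \emph{not} switch to a totally definite quaternion algebra and a Waldspurger/theta-element Euler system. Instead, one still works on the indefinite Shimura curve: choose $K/F$ so that the sign of $L(E/K,s)$ is $-1$ (i.e., $L(E^K/F,1)=0$ with a simple zero, via non-vanishing of $L'$ in quadratic families), produce a non-torsion Heegner point $y_K\in E(K)$ by Gross--Zagier--Zhang, run Kolyvagin over $K$ to get $\mathrm{rank}_\z E(K)=1$ and $\Varpi(E/K)$ finite, and then observe that the complex conjugation eigenvalue forces $y_K$ into the $\epsilon_K$-eigenspace, so $\mathrm{rank}_\z E(F)=0$. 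Your totally definite route (``Heegner-type construction over ring class fields'' feeding Kolyvagin) conflates two different machines: the definite setting produces theta elements rather than points, and the corresponding Euler system is the bipartite one of Bertolini--Darmon, not Kolyvagin's. That alternative can be made to work, but it is not the argument being cited here.
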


Throughout, we also fix a rational prime $p>5$ which is inert in $F$ and let $\p$ be the prime in $F$ dividing $p$. Let $E$ be an elliptic curve defined over $F$ of conductor $\n_E$. We assume that $\n_E = \n\p, \p \nmid \n$ and $E$ has split multiplicative reduction at the prime $\p$. We aim to prove for the rank $1$ i.e. $r=1$ case, a "$p$-converse" of Theorem \ref{ZhangMainResult} when $E$ has split multiplicative at the inert prime $\p \mid p$ in $F$. More precisely, let \textbf{(irred)} and \textbf{(MML)} be as in Section \ref{BigGaloisRepresentation} and Theorem \ref{WanMainResult}, respectively. We prove the following:
\begin{thm} \label{Our Main Thm} (Theorem \ref{main theorem in text})
     Let $p > 5$ be a prime and $F$ be a real quadratic field such that $p$ is inert in $F$. Denote by $\p$, the prime of $F$ lying over $p$. Let $E$ be an elliptic curve defined over $F$ having split multiplicative reduction at $\p$. Suppose \textbf{(irred)} and \textbf{(MML)} hold. Furthermore, assume that 
    \begin{enumerate}
        \item $\mathrm{rank}_\z \hspace{0.01mm} \hspace{1mm} E(F) = 1$,

        \item $\Varpi(E/F)_{p^\infty}$ is finite.
    \end{enumerate}
    Then $\mathrm{ord}_{s=1} L(E/F,s) = 1$.
\end{thm}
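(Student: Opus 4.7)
The plan is to convert the algebraic hypotheses (rank one together with finiteness of $\Varpi(E/F)_{p^\infty}$) into the analytic conclusion $\mathrm{ord}_{s=1}L(E/F,s)=1$ by routing through the cyclotomic $p$-adic $L$-function $L_p(E/F,s)$, leaning on two pillars: the Iwasawa main conjecture \textbf{(MML)} which relates $L_p(E/F)$ to a dual Selmer module, and an exceptional-zero (derivative) formula at $s=1$ dictated by the split multiplicative reduction of $E$ at the inert prime $\p$. The eventual target is $L'(E/F,1)\neq 0$; combined with the parity conjecture for modular elliptic curves over totally real fields (which under our hypotheses forces the global root number to be $-1$ and hence $\mathrm{ord}_{s=1}L(E/F,s)$ to be odd) together with the upper bound $\mathrm{ord}_{s=1}L(E/F,s)\le 1$ coming from nonvanishing of $L'$, this yields the desired equality.

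\textbf{Main steps.} First, by the Kummer sequence and the finiteness of $\Varpi(E/F)_{p^\infty}$, the rank one assumption translates into $\mathrm{corank}_{\z_p}\mathrm{Sel}_{p^\infty}(E/F)=1$. Working in the Selmer-complex formalism of Nekov\'a\v{r} (as the keywords suggest), this lifts to a $\Lambda$-rank computation for the cyclotomic Iwasawa Selmer complex attached to $T_pE$ along the cyclotomic $\z_p$-extension $F_\infty/F$, where $\Lambda=\z_p[[\mathrm{Gal}(F_\infty/F)]]$. Invoking \textbf{(MML)}, the characteristic ideal of the dual Iwasawa Selmer is generated by $L_p(E/F)$, pinning down the order of vanishing of $L_p(E/F,s)$ at $s=1$. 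Next, the split multiplicative reduction at $\p$ forces an automatic trivial zero $L_p(E/F,1)=0$ (vanishing of an interpolation factor); and since $L(E/F,1)=0$ in the rank one regime, the first-derivative formula of Greenberg--Stevens / Mazur--Tate--Teitelbaum type, $L_p'(E/F,1)\doteq \mcL_p(E_\p)\cdot L(E/F,1)/\Omega$, also vanishes. The last step is to apply a second-derivative (or Hida-family) exceptional zero formula in the spirit of Bertolini--Darmon--Iovita, Mok, or Venerucci, relating $L_p''(E/F,1)$ to $\mcL_p(E_\p)\cdot L'(E/F,1)/\Omega$, or equivalently to the square of the $p$-adic logarithm of a Stark--Heegner/Darmon point $P_{\mathrm{SH}}\in E(F)\otimes\q_p$. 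Since the Iwasawa-theoretic input forces the order of vanishing of $L_p$ at $s=1$ to be exactly two (the trivial zero plus one genuine extra from the corank-one Selmer), the right-hand side of this formula is nonzero, forcing $L'(E/F,1)\neq 0$.

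\textbf{Main obstacle.} The hardest step will be the last: calibrating the Iwasawa-theoretic order of vanishing from \textbf{(MML)} against the automatic trivial-zero contribution, and then extracting a genuine higher-order term expressible in terms of the \emph{complex} derivative $L'(E/F,1)$. This rests on a second-derivative exceptional zero formula for $L_p(E/F,s)$ in the inert real quadratic setting with split multiplicative reduction --- considerably more delicate than its cyclotomic $\q$-analogue, since it builds on the Darmon--Pollack--Bertolini--Mok circle of ideas around $p$-adic $L$-functions of Hida families of Hilbert modular forms and on nontriviality of the associated Stark--Heegner / Darmon points. The hypothesis \textbf{(irred)} will be crucial in ruling out congruence degeneracies in the Hida family, while \textbf{(MML)} must be invoked for a Selmer structure precisely tuned to the multiplicative reduction at $\p$ so that its algebraic order of vanishing can be cleanly separated from the trivial-zero contribution.
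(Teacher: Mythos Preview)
Your outline has the right flavor---combine a main-conjecture divisibility with an exceptional-zero formula---but two concrete things fail. First, you have misread \textbf{(MML)}: in this paper it is not the main conjecture but the hypothesis ``there is a minimal modular lifting of $\overline{\rho}_\f$,'' a technical input needed to apply Wan's one-divisibility theorem. That theorem, moreover, lives over an auxiliary totally imaginary quadratic extension $K/F$ in which $\p$ splits (a three-variable $\I[[\Gamma_K]]$-setting), not over the cyclotomic $\zp$-extension of $F$; so your plan to ``invoke \textbf{(MML)}'' to pin down $\mathrm{ord}_{s=1}L_p(E/F,s)$ directly does not go through. Second, the second-derivative formula you want, $L_p''(E/F,1)\doteq \mcL_p(E_\p)\cdot L'(E/F,1)/\Omega$, is not available for the one-variable cyclotomic $p$-adic $L$-function over $F$. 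What is available (Mok) is a formula in the \emph{weight} variable $k$ along the central-critical line $s=k/2$, and it is phrased over the CM field $K$: the second $k$-derivative at $k=2$ of $L_p(f_\infty,\psi,k,k/2)$ is, up to a nonzero constant, the square of $\log_E(\mathbb{P}_\psi)$ for a Heegner point $\mathbb{P}_\psi\in (E(K)\otimes\q)_\psi$, and $\mathbb{P}_\psi$ has infinite order iff $L'(E/F,\psi,1)\neq 0$. The link to the complex derivative is thus indirect, via Gross--Zagier over $K$.

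The paper's proof accordingly runs over $K$, not $F$. One chooses $K$ (via Friedberg--Hoffstein and Zhang) so that $\mathrm{ord}_{s=1}L(E^K/F,s)=1$, whence $\mathrm{rank}_\z E(K)^{\epsilon_K}=1$ and $\Varpi(E/K)_{p^\infty}$ is finite. Mok's formula applied to $\psi\in\{1,\epsilon_K\}$ gives $\mathrm{ord}_{k=2}L_p^{cc}(f_\infty/K,k)\geq 4$, with equality iff $\mathrm{ord}_{s=1}L(E/K,s)=2$. Wan's divisibility plus a control theorem down to the central-critical line yield $\mathrm{ord}_{k=2}L_p^{cc}\leq \mathrm{len}_P(X_{F_\infty}^{S,cc}(\f/K))$; a comparison of Selmer groups gives $\mathrm{len}_P(X_{F_\infty}^{cc})\leq \mathrm{len}_P(X_{Gr}^{cc})+2$; and the core computation---an explicit Nekov\'a\v{r} $p$-adic weight pairing on the extended Mordell--Weil group $E^\dagger(K)$, evaluated in terms of $\log_{N(q_E)}$---shows $\mathrm{len}_P(X_{Gr}^{cc}(\f/K))=2$ under the rank hypotheses. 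Chaining the inequalities forces $\mathrm{ord}_{k=2}L_p^{cc}=4$, hence $\mathrm{ord}_{s=1}L(E/K,s)=2$, and the factorization $L(E/K,s)=L(E/F,s)L(E^K/F,s)$ finishes. Your one-variable cyclotomic approach over $F$ alone accesses neither Wan's input nor Mok's formula, and the Stark--Heegner route you allude to would require conjectural rationality statements not assumed here.
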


As a result, we can conclude using Theorem \ref{ZhangMainResult} that

\begin{thm}
    Let $p > 5$ be a prime and $F$ be a real quadratic field such that $p$ is inert in $F$. Denote by $\p$, the prime of $F$ lying over $p$. Let $E/F$ be an elliptic curve having split multiplicative reduction at $\p$. Assume that \textbf{(irred)} and \textbf{(MML)} hold. Then
    \begin{align*}
        \mathrm{rank}_\z \hspace{0.01mm} \hspace{1mm} E(F) = 1 \text { and } \hspace{1mm} \# \ \Varpi(E/F) < \infty \iff \mathrm{ord}_{s=1} L(E/F,s) = 1.
    \end{align*}
\end{thm}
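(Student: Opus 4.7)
The plan is simply to combine the $p$-converse in rank one established in Theorem \ref{Our Main Thm} of this paper with the generalisation of the Gross--Zagier--Kolyvagin theorem to real quadratic fields due to Zhang and recorded here as Theorem \ref{ZhangMainResult}. Both implications of the equivalence reduce to checking that the hypotheses of the cited theorems are met under the standing assumptions.

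For the direction $(\Rightarrow)$, I would start from the assumption that $rank_\z E(F) = 1$ and $\Varpi(E/F)$ is finite. The finiteness of the full Tate--Shafarevich group immediately implies the finiteness of its $p$-primary part $\Varpi(E/F)_{p^\infty}$, so both numerical hypotheses (1) and (2) of Theorem \ref{Our Main Thm} are satisfied. Together with the shared assumptions on $p$ being inert in $F$, on $E$ having split multiplicative reduction at $\p$, and on the technical conditions \textbf{(irred)} and \textbf{(MML)}, Theorem \ref{Our Main Thm} directly yields $ord_{s=1} L(E/F,s) = 1$.

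For the direction $(\Leftarrow)$, I would invoke Theorem \ref{ZhangMainResult} with $r = 1$. The conductor hypothesis of that theorem asks for a prime $\mathfrak{q}$ dividing $\n_E$ to first order with $\mathfrak{q} \nmid \n$; in our situation we take $\mathfrak{q} = \p$, which is permissible since $\n_E = \n\p$ with $\p \nmid \n$ by hypothesis. Zhang's theorem then returns simultaneously $rank_\z E(F) = 1$ and the finiteness of the full Tate--Shafarevich group $\Varpi(E/F)$, which is exactly what is required.

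There is essentially no obstacle at this step: all of the substantive content is concentrated in Theorem \ref{Our Main Thm}, which presumably is established earlier in the paper via Iwasawa-theoretic input, the Heegner-point main conjecture encoded in \textbf{(MML)}, and the interpolation properties of the relevant $p$-adic $L$-function in the inert split multiplicative setting. The present statement is merely the clean packaging of that main theorem with Zhang's version of the Gross--Zagier--Kolyvagin theorem over totally real fields.
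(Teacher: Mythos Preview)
Your proposal is correct and follows exactly the approach the paper indicates: the forward implication is Theorem \ref{Our Main Thm} (after noting that finiteness of $\Varpi(E/F)$ implies finiteness of its $p$-primary part), and the reverse implication is Theorem \ref{ZhangMainResult} applied with $\mathfrak{q}=\p$, which is valid because split multiplicative reduction at $\p$ forces $\p$ to divide the conductor exactly once. One small inaccuracy in your closing commentary: \textbf{(MML)} is the existence of a minimal modular lifting of $\overline{\rho}_\f$, not a Heegner-point main conjecture, though this does not affect the argument.
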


Moreover, as a consequence of Theorem \ref{Our Main Thm}, we can prove a $p$-converse theorem for an elliptic curve defined over $\q$ by removing hypothesis 2 in \cite[Theorem A]{Rodolfo}. More precisely, we show that

\begin{thm} \label{p-converse over Q} (Theorem \ref{proof over Q})
    Let $p > 5$ be a rational prime. Let $E/\q$ be an elliptic curve of conductor $N_E := Np$. Assume that $E$ has split multiplicative reduction at $p$. Suppose \textbf{(irred)} holds. Further, assume that
    \begin{enumerate}
        \item $\mathrm{rank}_\z \hspace{0.1mm} E(\q) = 1$,
        \item $\Varpi(E/\q)_{p^\infty}$ is finite.
    \end{enumerate}
    Then $\mathrm{ord}_{s=1} \hspace{0.1mm} L(E/\q,s) = 1$.
\end{thm}




        
\begin{rem}
  We remark that, in fact, our results (Theorems \ref{Our Main Thm}, \ref{p-converse over Q}) hold for $p = 5$, if we exclude the following case: the projective image $\overline{G}$ of $\overline{\rho}$ is isomorphic to $\mathrm{PGL}_2(\mathbb{F}_p)$ and the mod-$p$ cyclotomic character factors through $G_F \rightarrow \overline{G}^{ab} \cong \z/{2\z}$. Note that for $p=5$, this exclusion is imposed due to a result of Fujiwara \cite[Theorem 8]{Wan} which was used in \cite{Wan}. In order to apply Wan's result (Theorem \ref{WanMainResult}) in our proof, we need to make this exclusion for $p=5$.
\end{rem}
\subsection{Previous work} There has been a considerable interest in establishing various   `$p$-converse' results  to the theorems of Gross-Zagier and Kolyvagin (Theorem \ref{GZK}) in the literature in the recent past. The choice of the prime $p$  and the reduction type of the  elliptic curves  at $p$ play an important role in these results. When $r=0$, i.e., in the rank $0$ case, $p$-converse theorems to Theorem \ref{GZK} are related to  one sided divisibility in the Iwasawa Main Conjecture (cf. \cite{SkinnerUrban} and \cite{SkinnerRank0}). In the rank $1$, i.e., $r=1$ case, for a non-CM elliptic curve $E/\q$, important results on $p$-converse theorems were established independently, around the same time, by \cite{SkinnerPConverse} and W. Zhang \cite{ZhangPConverse}, under some  hypotheses on the conductor of $E/\q$. The prime $p$ involved in both the cases, was an ordinary prime for the elliptic curve and  Castella-Wan extends $p$-converse theorem  for a prime $p$ of  (good) supersingular reduction of $E$ in \cite{CastellaWanPConverse}.  \cite{KimPConverse} proves a "soft" $p$-converse assuming isomorphism of a certain local restriction map at the prime $p$. 
For an Eisenstein prime $p$, a $p$-converse result appears in \cite{CGLS}. For a prime $p$ of multiplicative reduction of $E/\q$, a $p$-converse result in $r=1 $ case has been proven in \cite{skinner2014indivisibilityheegnerpointsmultiplicative} and \cite{castella2024exceptionalzerosheegnerpoints}, under certain hypotheses. Moreover, under a different set of hypotheses, Venerucci \cite{Rodolfo} has also proven a $p$-converse theorem in the rank $1$ case, for a prime $p$ of split multiplicative reduction of an elliptic curve  $E/\q$. The approach for proving a $p$-converse theorem taken for a CM elliptic curve over $\q$ is distinct from that of a non-CM elliptic curve and important results in this setting are due to Rubin,  Burungale-Tian \cite{BurungaleTian}, and many others. We refer to \cite{pConverseSurvey} for a comprehensive survey of literature on  $p$-converse theorems. More recently, a rank zero $p$-converse result  has been proven in \cite{BurungaleTian2} for an elliptic curve defined over an imaginary quadratic field. Furthermore, rank-1 $p$-converse results under different sets of  hypotheses have appeared in \cite{Skinner2025} and \cite{Skinner2026}.
\\

\noindent However, almost all of these results are valid for elliptic curves defined over $\q$. In this article, we  extend the strategy of \cite{Rodolfo} to prove for the first time a similar result beyond $\q$, or in other words, we discuss a "$p$-converse" result for a real quadratic field, of the rank $1$ case of Theorem \ref{ZhangMainResult}. Moreover, for  $p > 5$, in Section \ref{p converse over Q}, we strengthen the $p$-converse result   over $\q$ in  \cite{Rodolfo} by removing a technical assumption (hypothesis 2) in \cite[Theorem A]{Rodolfo}.  

\subsection{Outline  of the proof:}
We have employed various new ingredients to establish our $p$-converse results - for example, a two-variable $p$-adic $L$-function constructed by \cite{MokExceptionalZero}, a central derivative formula in \cite{MokHeegnerPoints}, a three-variable $p$-adic $L$-function and the main conjecture proven in \cite{Wan}, the interpolation formula for $p$-adic $L$-functions for Hilbert modular forms \cite{BergdallHansen}. We explain the usage of some of these in the rest of this subsection.
\\

Since $F$ is a  real quadratic field, there is a cuspidal Hilbert newform $f := f_E$ of parallel weight 2 over $F$, with trivial Nebentypus attached to $E/F$ by the modularity theorem in \cite{FLHS}. Let $\f$ be the Hida family that passes through $f$ (for details, see Section \ref{Hida family}).
We begin by mentioning the main result of \cite{Wan}. 
\\
Let $K$ be a totally imaginary quadratic extension of $F$ such that $\p$ splits in $K$ and let $K_\infty$ be a $\zp^3$-extension of $K$ (for details, see Section \ref{K_infty}).
\\
Under the hypotheses in Section \ref{WanMainResult}, Wan constructs a $p$-adic $L$-function $\mathcal{L}^S_K(\f)$, where $S$ is a finite set of non-archimedean primes containing all the bad primes of $E/F$. Let $ \mathrm{char}^S_{K_\infty}(\f)$ be the characteristic ideal attached to the Pontryagin dual of the Selmer group $Sel_{\kinf}^S(\f)$ as defined in Section \ref{Wan Sel Gp}. Then in \cite[Theorem 101]{Wan}, it is proven that
\begin{align*}
     \mathrm{char}^S_{K_\infty}(\f) \subset (\mathcal{L}^S_K(\f)).
\end{align*}
Using the Iwasawa main conjecture of Wan, in Corollary \ref{ineq2} we deduce the inequality
\begin{align} \label{introineq1}
    \mathrm{ord}_{k=2} L_p^{cc}(f_\infty/K,k) \leq \mathrm{len}_{P}(X_{F_\infty}^{S,cc}(\f/K)).
\end{align}
Here the notations are as follows. Let $\I$ be a local domain which is finite over $\Lambda$ (for details, see Section \ref{Hida family}). $P$ is the prime of $\I$ which specialises the Hida family $\f$ to $f$ and $L_p^{cc}(f_\infty/K,k)$ is defined using $\mathcal{L}_K^S(\f)$ (for details, see Section \ref{Central Critical Def}). As shown in (\ref{Central Critical in Terms of Mok}), $L_p^{cc}(f_\infty/K,k) = \lambda L_p(f_\infty,1,k,k/2) L_p(f_\infty,\epsilon_K,k,k/2)$ is the product (up to an element $\lambda \in \overline{\qp}^*$) of two-variable $p$-adic $L$-functions defined in \cite{MokExceptionalZero} evaluated at the trivial character 1 and at the quadratic character $\epsilon_K$, which is the quadratic character associated with $K$. $\mathrm{len}_P(\star)$ denotes the length of $\star_P$ as an $\ip$-module. $X_{F_\infty}^{S,cc}(\f/K)$ is the Pontryagin dual of the following Selmer group:
\begin{align*}
    Sel^{S,cc}_{F_\infty}(\f/K) := ker \left(H^1(G_{K,S_K},\T_\f \otimes_\I \I^*) \xrightarrow{\prod_{w|\p}p_{w_*}^-\circ \hspace{0.001 mm} res_w} \prod_{w|\p} H^1(I_w,\T_{\f,w}^- \otimes_\I \I^*) \right),
\end{align*}
where $S_K$ is the set of primes of $K$ lying above the primes in $S$.
Here $\T_\f$ is the 2-dimensional self-dual representation associated with the Hida family $\f$, $\T_{\f,w}^+$ is a rank 1 $\I$-submodule. They fit into the following exact sequence
\begin{align*}
    0 \rightarrow \T_{\f,w}^+ \rightarrow \T_\f \rightarrow \T_{\f,w}^- \rightarrow 0.
\end{align*}
For more details, see Section \ref{BigGaloisRepresentation}.
\\
Next, in \cite{MokHeegnerPoints}, Mok proved that for $\psi \in \{1, \epsilon_K\}$, the function $L_p(f_\infty,\psi,k,k/2)$ vanishes to order at least 2 at $k = 2$ and that there exists an element $\mathbb{P}_\psi \in (E(K) \otimes \q)_\psi$ and $l \in \q^*$ such that
\begin{align*}
    \frac{d^2}{dk^2} L_p(f_\infty,\psi,k,k/2) \vline_{k=2} = l \cdot (\log\mathrm{Norm}_{E/F_\p}(\mathbb{P}_\psi))^2,
\end{align*}
using which we show in Corollary \ref{ineq1} that under the assumption that the global root number $w(E/K)$ is $-1$, we have that 
\begin{align} \label{introineq2}
    \mathrm{ord}_{k=2} L_p^{cc}(f_\infty/K,k) \geq 4
\end{align}
and moreover,
\begin{align} \label{introineq3}
    \mathrm{ord}_{k=2} L_p^{cc}(f_\infty/K,k) = 4 \iff \mathrm{ord}_{s=1}L(E/K,s) = 2,
\end{align}
where $L(E/K,s) $ is the Hasse-Weil complex $L$-function of $E$ over $K$ and one has the factorisation  $L(E/K,s)=L(E/F,s)L(E/F,\epsilon_K,s)$, i.e., $L(E/K,s)$ is  the product of the complex $L$-function of $E/F$ with the complex $L$-function of $E/F$ twisted by $\epsilon_K$.
\\
Now, define the strict Selmer group of $\f/K$ as
\begin{align*}
    Sel^{cc}_{Gr}(\f/K) := ker \left(H^1(G_{K,S},\T_\f \otimes_\I \I^*) \rightarrow \prod_{w|\mathfrak{p}} H^1(K_w,\T_{\f,w}^- \otimes_\I \I^*) \right)
\end{align*}
and let $X^{cc}_{Gr}(\f/K)$ denote its Pontryagin dual. Using \cite{Nekovar}, we define an alternating, $Gal(K/F)$-equivariant, bilinear pairing $\langle -,- \rangle^{Nek}_K$ on Selmer complexes. Then using some Kummer theory and this pairing, we conclude by Theorem \ref{ineq4} that 
\begin{align} \label{introineq4}
    \mathrm{len}_P(X^{cc}_{Gr}(\f/K)) = 2.
\end{align}
The calculations in Lemma \ref{ineq3}, give us the following inequality
\begin{align} \label{introineq5}
    \mathrm{len}_P(X^{S,cc}_{F_\infty}(\f/K)) \leq \mathrm{len}_P(X^{cc}_{Gr}(\f/K)) + 2.
\end{align}
Inequalities (\ref{introineq1}),(\ref{introineq2}),(\ref{introineq4}) and (\ref{introineq5}) allow us to conclude that
\begin{align*}
     \mathrm{ord}_{k=2} L_p^{cc}(f_\infty/K,k) = 4
\end{align*}
and hence by (\ref{introineq3}), we obtain that
\begin{align*}
    \mathrm{ord}_{s=1}L(E/K,s) = 2.
\end{align*}
Now by the choice of the field $K$ that we make in Section \ref{ChoiceOfK}, we get our result that
\begin{align*}
    \mathrm{ord}_{s=1}L(E/F,s) = 1.
\end{align*}

\subsection{Future Outlook}
Assuming the modularity of the elliptic curve $E/L$ and some other technical conditions, one can hope to extend these techniques to a general totally real number field $L$ for which Leopoldt's conjecture holds true (in particular, for any abelian totally real number field). We also indicate some possible extensions of our main results in Remark \ref{possible-extensions}.

However, note that for a CM elliptic curve defined over a real quadratic field, one would expect that the techniques required to prove a $p$-converse result would be rather different.

 \begin{acknowledgements}
	A. Pal and S. Jha acknowledge ANRF/ARG/2025/005885/MS grant. S. Jha is also supported by ANRF/ARGM/2025/002127/MTR grant. M. Bansal acknowledges the support of the HRI institute fellowship for PhD students.
\\
 The authors would like to thank Haruzo Hida, C.~S.~Rajan and Sudhanshu Shekhar for their valuable discussions and encouragement. They are also grateful to Francesc Castella for suggesting an application to the corresponding  \( p \)-converse result over \( \mathbb{Q} \). Finally, the authors would also like to thank the anonymous referee for a careful reading and her/his various suggestions for the improvement of the manuscript.

    \end{acknowledgements}

\section{Hida Theory}
We start with some preliminaries on the Hida theory following \cite{Hida1} and \cite{MokExceptionalZero}. We also recall the big Galois representation associated to the Hida family following \cite{Nekovar}.
\\
Recall we have  fixed a real quadratic field $F$ and for every place $v$ of $F$, fix an embedding $i_v : \overline{F} \hookrightarrow \overline{F_v}$ and a decomposition group $i_v^* : G_{F_v} \hookrightarrow G_F$ at $v$. Denote by $I$ the set of the two embeddings of $F$ into $\mathbb{R}$. Let $t = \sum_{\sigma \in I} \sigma \in \z[I]$. Let $E/F$ be an elliptic curve as in Theorem \ref{Our Main Thm}.

\subsection{Hecke Algebra} \label{HeckeAlgebra}
Let $\mcO$ be a finite extension of $\zp$ and for every $k \geq 2$, let $h_{k}^{\mathrm{ord}}(\n\p^\infty,\mcO)$ be the $p$-ordinary Hecke algebra whose coefficients lie in $\mcO$, where $\n\p$ is the conductor of $E/F$. Let $Cl_F(\n\p^\alpha)$ be the narrow ray class group of $F$ modulo $\n\p^\alpha$ for $\alpha \geq 1$. Let $Z_F(\n) = \varprojlim_{\alpha} Cl_F(\n\p^\alpha)$. Then we can decompose $Z_F(\n) = W_F \times Z_F(\n)_{tor}$, where $W_F$ is $\zp$-free and $Z_F(\n)_{tor}$ is finite. Let $Z_\alpha$ denote the kernel of the natural projection $Z_F(\n) \rightarrow Cl_F(\n\p^\alpha)$ and let $W_\alpha := W_F \cap Z_\alpha$. Define the following Iwasawa algebras
\begin{align*}
    \Lambda := \varprojlim_{\alpha} \mcO[W_F/W_\alpha], \hspace{5mm} \mcA := \varprojlim_{\alpha} \mcO[Cl_F(\n\p^\alpha)].
\end{align*}
Let $\q_\infty$ be the cyclotomic $\zp$-extension of $\q$. Write $\Gamma$ for the image of $Gal(F\q_\infty/F) \hookrightarrow Gal(\q_\infty/\q) \rightarrow 1+p\zp$ (note that $\Gamma \cong \zp$) and fix its topological generator $\gamma$. We have $\Lambda \cong \mcO[[\Gamma]]$ and $\mcA = \Lambda[Z_F(\n)_{tor}]$. If $\ml$ is an ideal prime to $\n\p$, write $[\ml]$ for the group ring element of $\mcA$ and $\langle [\ml] \rangle$ for the corresponding element in $\Lambda$.
\\
Similarly, define $Z_\q(1), Z_\q(1)_{tor}$ and $W_\q$. Since $p > 5$, one can see that $Z_\q(1) = \zp^\times, Z_\q(1)_{tor} = \mathbb{F}_p^\times$ and $W_\q = 1+p\zp$. Denote by $\omega_\q : Z_\q(1) \rightarrow Z_\q(1)_{tor}$ the Teichmuller character and by $\langle \cdot \rangle_\q : Z_\q(1) \rightarrow W_\q$ the natural projection. Recall the norm map, $Nm : Z_F(1) \rightarrow Z_\q(1)$ and set $\omega := \omega_F := \omega_\q \circ Nm$ and $\langle \cdot \rangle := \langle \cdot \rangle_F := \langle \cdot \rangle_\q \circ Nm$. By abuse of notation, we denote by the same symbols the composition of $\omega$ and $\langle \cdot \rangle$ with the projection $Z_F(\n) \rightarrow Z_F(1)$.
\\
In \cite[Theorem 3.2]{Hida1}, Hida proves the following isomorphism for every $k,k' \geq 2$
\begin{align*}
    h_{k}^{\mathrm{ord}}(\n\p^\infty,\mcO) \xrightarrow{\sim} h_{k'}^{\mathrm{ord}}(\n\p^\infty,\mcO).
\end{align*}
So we can write $\h^{\mathrm{ord}}(\n,\mcO)$ for $h_{k}^{\mathrm{ord}}(\n\p^\infty,\mcO).$ For every $\alpha \geq 1$, consider the morphisms $Z_F(\n)/Z_{F,\alpha}(\n) \rightarrow h_k(\n \p^\alpha, \mcO)$ sending the class of $\ml$ to the diamond operator $[\ml]_k$. These morphisms are compatible with each other for every $\alpha$. So we can define the morphism on the inverse limits
\begin{align*}
    [\cdot]_{k,\infty} : Z_F(\n) \rightarrow \h^{\mathrm{ord}}(\n,\mcO).
\end{align*}
This is a continuous character and hence we can extend it to $\mcA$. This defines an action of $\mcA$ on $\h^{\mathrm{ord}}(\n,\mcO)$ and hence, in particular we get a $\Lambda$-algebra structure on $\h^{\mathrm{ord}}(\n,\mcO)$. As in \cite[Section 4.1]{MokExceptionalZero}, we consider the $\Lambda$-algebra structure on $\h^{\mathrm{ord}}(\n,\mcO)$ which is defined as the twist of this $\Lambda$-algebra structure by the character $\Lambda \rightarrow \mcO$ that takes $\langle [\ml] \rangle$ to $\langle \ml \rangle^{k-2}$ for all $\ml$ coprime to $\n\p$.

\subsection{Hida Family} \label{Hida family}
Let $\lambda : W_F \rightarrow \overline{\qp}^\times$ be a continuous character. Then we can extend $\lambda$ to an algebra homomorphism (again denoted $\lambda), \lambda : \Lambda \rightarrow \overline{\qp}$. Let $P_\lambda$ denote the point of $Spec(\Lambda)_{/\mathcal{O}}(\overline{\qp})$ corresponding to $\lambda$. Define a character 
\begin{align*}
    \chi & : Z_F(\n) \rightarrow \zp^* \\
    \chi([a]) & := N_{F/\q}(a)
\end{align*}
and for each $\xi \in \z \cdot t$ put $\chi_\xi = \chi^{[\xi]} : Z_F(\n) \rightarrow \zp^*$, where $\xi = [\xi] \cdot t$. Now for each finite order character $\epsilon : W_F \rightarrow \overline{\qp}^\times$ and $m \in \z \cdot t$, write $P_{m,\epsilon}$ for $P_{\chi_m\epsilon}$. Then we have the following 
\begin{thm}
    Let $k \geq 2$ and let $\epsilon : W_F/W_\alpha \rightarrow \mathcal{O}^*$ be a finite order character. Suppose $W_\alpha = Z_\alpha$. Write $P$ for $P_{k,\epsilon}$. Then there is a canonical isomorphism of $\Lambda$-algebras
    \begin{align*}
        \h^{\mathrm{ord}}(\n,\mathcal{O})/P \cong h_{k}^{\mathrm{ord}}(\n\p^\alpha,\epsilon,\mathcal{O}).
    \end{align*}
\end{thm} 
\begin{proof}
    See \cite[Corollary 4.21]{HidaBook}.
\end{proof}
Denote by $\mcL$ the fraction field of $\Lambda$. Fix an algebraic closure $\overline{\mcL}$ of $\mcL$ and consider $\overline{\qp}$ as a subfield of $\overline{\mcL}$. Fix a $\Lambda$-algebra homomorphism $\lambda : \h^{\mathrm{ord}}(\n,\mcO) \rightarrow \overline{\mcL}$. Denote by $\mck$ the fraction field of the image of $\lambda$ inside $\overline{\mcL}$. Then $\mck/\mcL$ is a finite extension of fields. Let $\I$ be the integral closure of $\Lambda$ in $\mck$. $\I$ is a local domain that is finite over $\Lambda$. Then the image of $\lambda$ lies inside $\I$. Let 
\begin{align*}
    \mathfrak{X} := \mathfrak{X}(\I) := \mathrm{Hom}_{\mathcal{O}-alg}(\I,\overline{\qp})
\end{align*}
and put 
\begin{align*}
    \mathfrak{X}_{alg}(\I) := \{ P \in \mathfrak{X}(\I) : P|_\Lambda = P_{n,\epsilon} \hspace{1mm} \text{for some} \hspace{1mm} n \in \z \cdot t, [n] \geq 0 \hspace{1mm} and \hspace{1mm} \epsilon : W_F \rightarrow \overline{\q}^\times \}
\end{align*}
where $\epsilon$ is some finite order character. Then for every $P \in \mathfrak{X}(\I)$ we can consider the $\mathcal{O}$-algebra homomorphism $\lambda_P = P \circ \lambda : \h^{\mathrm{ord}}(N,\mathcal{O}) \rightarrow \overline{\qp}$. Then \cite[Theorem 5.6]{Hida1} states that
\begin{thm}
    There exists a unique cuspform $f_P \in S^{\mathrm{ord}}(\n,\overline{\qp})$ such that $f_P|T_0(\mathfrak{a}) = \lambda_P(T_0(\mathfrak{a}))f_P$ and the Fourier coefficients $C(\mathfrak{a},f_P)$ are equal to $\lambda_P(T_0(\mathfrak{a}))$ for all integral ideals $\mathfrak{a}$ of $F$. Conversely, suppose there is a non-zero common eigenform $f$ in $S^{\mathrm{ord}}(\n,\mathcal{O})$ of all $T_0(\mathfrak{a})$, then there exists a $\Lambda$-algebra homomorphism $\lambda : \h^{\mathrm{ord}}(\n,\mathcal{O}) \rightarrow \overline{\mathcal{L}}$ and a point $P \in Spec(\I)(\mathcal{O})$ such that $f$ is a constant multiple of $f_P$. If $f$ is a complex cuspform, then the $P$ above belongs to $\mathfrak{X}_{alg}(\I)$.
\end{thm}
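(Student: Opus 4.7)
The plan is to use the control theorem (the previous theorem in this excerpt) relating $\h^{ord}(\n,\mcO)$ to classical ordinary Hecke algebras, together with the perfect pairing between Hilbert cusp forms and Hecke operators, to reduce the statement to standard facts about classical eigenforms and $p$-adic interpolation thereof.

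For the forward direction, fix $P \in \mathfrak{X}(\I)$ and consider $\lambda_P = P \circ \lambda$. When $P = P_{n+2v,\epsilon}$ lies in $\mathfrak{X}_{alg}(\I)$, the control theorem identifies $\h^{ord}(\n,\mcO) \otimes_\Lambda \Lambda_P/P\Lambda_P$ with the classical ordinary Hecke algebra $h_{k,w}^{ord}(\n\p^\alpha,\epsilon,Frac(\mcO))$, and $\lambda_P$ factors through this quotient as an $\overline{\qp}$-algebra character. By the duality $(T,f) \mapsto a(\mcO_F, f|T)$ between this Hecke algebra and the space of ordinary cusp forms (a consequence of the $q$-expansion principle and multiplicity one), such a character pins down a unique normalized eigenform $f_P$ satisfying $a(\mathfrak{a}, f_P) = \lambda_P(T_0(\mathfrak{a}))$. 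For non-arithmetic $P$, I would define $f_P$ directly by the formal $q$-expansion $\sum_\mathfrak{a} \lambda_P(T_0(\mathfrak{a}))\, q^\mathfrak{a}$ and argue it lies in $S^{ord}(\n, \overline{\qp})$ by $p$-adic interpolation: the coefficients form a $\Lambda$-adic family whose specializations at the Zariski-dense set $\mathfrak{X}_{alg}(\I)$ are classical ordinary forms, so the limit yields an element of Hida's space of ordinary $p$-adic cusp forms.

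For the converse, a nonzero common eigenform $f \in S^{ord}(\n,\mcO)$ determines a system of eigenvalues $\mathfrak{a} \mapsto a(\mathfrak{a}, f)$ extending to an $\mcO$-algebra character $\widetilde{\lambda}: \h^{ord}(\n,\mcO) \to \overline{\qp}$. Its kernel defines a minimal prime of $\h^{ord}(\n,\mcO)$; because this ring is finite over $\Lambda$, the normalization $\I$ of the corresponding quotient inside $\overline{\mcL}$ is again finite over $\Lambda$, and the resulting inclusion supplies the required $\Lambda$-algebra map $\lambda$ together with a point $P \in Spec(\I)(\mcO)$ lifting $\widetilde{\lambda}$. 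Matching Fourier coefficients shows $f$ is a scalar multiple of $f_P$. If $f$ is a complex cuspform of weight $(k,w)$ and nebentypus $\epsilon$, then the eigenvalues of the diamond operators pin down $\lambda_P|_\Lambda = P_{n+2v,\epsilon}$, placing $P$ in $\mathfrak{X}_{alg}(\I)$.

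The main obstacle, in my view, is the $p$-adic interpolation step for non-arithmetic $P$: one needs to verify that the formal $q$-expansion produced by $\lambda_P$ actually represents a $p$-adic Hilbert cusp form in $S^{ord}(\n, \overline{\qp})$ rather than merely a formal power series. This relies decisively on the finiteness and freeness of $\h^{ord}(\n,\mcO)$ over $\Lambda$ furnished by the previous theorem, which ensures both the density of arithmetic specializations and the uniform integrality bound needed to $p$-adically interpolate the Fourier coefficients across $\mathfrak{X}(\I)$.
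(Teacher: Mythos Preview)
The paper does not supply its own proof of this statement: it simply cites \cite[Theorem 5.6]{Hida1}. So there is no argument in the paper to compare against; the theorem is quoted as background from Hida's original work.

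Your sketch is a reasonable outline of how Hida's proof proceeds, and the ingredients you name (the control theorem, the duality pairing between the Hecke algebra and cusp forms, density of arithmetic points for the interpolation step) are indeed the main inputs. One remark: in the converse direction, the kernel of $\widetilde{\lambda}$ need not be a \emph{minimal} prime of $\h^{ord}(\n,\mcO)$; rather, $\widetilde{\lambda}$ factors through some irreducible component, and one takes $\I$ to be the normalization of that component. But this is a minor imprecision in an otherwise sound sketch, and in any case the paper's intent here is only to record the statement with a reference, not to reprove it.
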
 
We can also write the Hida family in terms of $\I$-adic forms as follows.
\begin{defn}
    Let $\f$ be the collection $\{C(\ma,\f), C_0(\ma,\f)\}$ of elements of $\I$ where $\ma$ varies over all the integral ideals of $F$. We call $\f$ an $\I$-adic modular form of tame level $\mathfrak{m}$ if there is a subset of $\mathfrak{X}_{alg}(\I)$, Zariski dense in $\mathfrak{X}(\I)$ such that for every $P$ in it there exists a classical Hilbert modular form $f_P \in M_k(\mathfrak{m}\p^\alpha,\epsilon,Frac(\mathcal{O}))$ whose coefficients satisfy $C(\mathfrak{a},f_P) = P(C(\mathfrak{a},\f))$ and $C_0(\mathfrak{a},f_P) = P(C_0(\mathfrak{a},\f))$. 
\end{defn}
$\I$-adic modular forms form an $\I$-module, denoted $M(\mathfrak{m},\I)$ and denote by $S(\mathfrak{m},\I)$ the $\I$-submodule consisting of those forms $\f$ such that $P(\f)$ is a cusp form for every $P$. Write $M^{\mathrm{ord}}(\mathfrak{m},\I) = e \cdot M(\mathfrak{m},\I)$ and $S^{\mathrm{ord}}(\mathfrak{m},\I) = e \cdot S(\mathfrak{m},\I)$ for the space of ordinary $\I$-adic modular and cusp form, respectively. Here $e$ is Hida's ordinary projection operator. 

\subsection{The Localised Hida Family}
Let $P \in \mathfrak{X}_{alg}(\I)$ be the prime of weight 2 attached to the parallel weight 2 Hilbert modular form $f=f_E$ with trivial Nebentypus and by an abuse of notation, we write $P$ again for $ker(P)$. Then $\ip$ is a discrete valuation ring with a uniformizer $\vp := \gamma - 1$. Moreover, $\ip/P\ip \cong L$, where $L$ is a finite extension of $\qp$. $L$ is a trivial representation of $G_{K',w}$ where $K'$ is either $F$ or a quadratic extension of $F$ and $w$ is a prime of $K'$ lying above $\p$. Let $\mathcal{R} \subset \overline{\qp}[[k-2]]$ be the subring consisting of formal power series in $k-2$ which have a positive radius of convergence. As in \cite[Section 8.1]{MokExceptionalZero}, define an algebra morphism
\begin{align*}
    M : \Lambda \rightarrow \mathcal{R}
\end{align*}
which sends an element of the form $\langle[\mathfrak{l}]\rangle$ to the power series that represents the analytic function $k \mapsto \langle\mathfrak{l}\rangle^{k-2}$, where $\langle \cdot \rangle$ is defined in Section \ref{HeckeAlgebra}. This map can be extended to the map (again denoted $M$)
\begin{align*}
    M : \ip \rightarrow \mathcal{R}.
\end{align*}
Now consider all the power series $M(x)$, where $x$ varies over $\ip$. Every $M(x)$ has a positive radius of convergence. Consider the discs of convergence of these and call their intersection $U$. Since $\I$ is finite over $\Lambda$, $U$ is a $p$-adic disc with center 2. Let $\mathcal{A}(U) \subset \overline{\qp}[[k-2]]$ be the subring consisting of the formal power series converging for every $x \in U$. So, in particular, we have a morphism
\begin{align} \label{def of M}
    M : \I \rightarrow \mathcal{A}(U).
\end{align}
Note that $M(x)(2) = P(x)$ for every $x \in \I$. Now applying this map to the Fourier coefficients $C(\mathfrak{a},\f)$ of the Hida family $\f$ attached to $f$, we get a formal power series $f_\infty$ in $\mathcal{A}(U)$ with coefficients $C_\mathfrak{a}(k) := M(C(\mathfrak{a},\f))$. Moreover, for every $\kappa \in U \cap \z_{\geq 2}$, the weight-$\kappa$ specialisation of $f_\infty$ is the classical eigenform $f_\kappa$ of weight $\kappa$. In particular, $f = f_2$. We will restrict our attention to $\{\kappa \in U \cap \z_{\geq 2} : k \equiv 2 \hspace{1mm} (mod \hspace{1mm} (2(p-1)))\}$.

\subsection{Big Galois Representation} \label{BigGaloisRepresentation}
Let $\rho_\f$ be the contragredient of the big Galois representation attached to $\f$ in \cite{Hida2} with the representation space $V_\f$.
We assume the following condition throughout:
\\
\textbf{(irred)}: The residual representation $\overline{\rho}_\f$ is absolutely irreducible.
\\
Note that, in this setting,  $\mathbf{(dist)_\f}$ of \cite[Section 1.2]{Wan} is being satisfied because $E$ has split multiplicative reduction.
\\
Recall that $P \in \mathfrak{X}_{alg}(\I)$ is the prime attached to $f$ by Hida's control theorem and that $\I_P$ is a discrete valuation domain with uniformizer $\varpi$. Then following \cite[Chapter 12]{Nekovar} we have an exact sequence of $Frac(\I)[G_\p]$-modules
\begin{align*}
    0 \rightarrow V_\f^+ \rightarrow V_\f \rightarrow V_\f^- \rightarrow 0
\end{align*}
where both $V_\f^\pm$ are one-dimensional over $Frac(\I)$. $V_\f$ contains a $G_{F,S}$-invariant $\I_P$-lattice $T_\f$, where $S$ is a finite set of non-archimedean primes containing all the bad primes of $E/F$. Since the big Galois representation that we are considering is the contragredient of what is being considered in \cite{Nekovar}, the determinant representation of $T_\f$ is
\begin{align*}
    det T_\f \cong \I(\chi_{cy}\chi_\Gamma)
\end{align*}
where $\chi_{cy}$ is the $p$-adic cyclotomic character and $\chi_\Gamma$ is the character $\chi_\Gamma : G_{F,S} \twoheadrightarrow Gal(F\q_\infty/F) \rightarrow \Gamma \hookrightarrow \Lambda^*$. Put
\begin{align*}
    T_\f^+ := T_\f \cap V_\f^+, \hspace{5mm} T_\f^- := T_\f \cap V_\f^-.
\end{align*}
Both $T_\f^\pm$ are free $\I$-modules of rank 1 and we have an exact sequence of $\I[G_\p]$-modules
\begin{align*}
    0 \rightarrow T_\f^+ \rightarrow T_\f \rightarrow T_\f^- \rightarrow 0
\end{align*}
on which $G_\p$ acts as follows: let $\mathfrak{a}_\p^*$ be the unramified character which takes the Frobenius $Fr(\p)$ to $\lambda(T(\p)) =: \ma_\p$, where $T(\p)$ is the Hecke operator. Then $G_\p$ acts on $T_\f^+$ via $\ma_\p^{*-1}\chi_{cy}\chi_\Gamma$ and on $T_f^-$ via $\ma^*_\p$. So 
\begin{align*}
    T_\f^+ \cong \I(\ma_\p^{*-1}\chi_{cy}\chi_\Gamma), \hspace{5mm} T_\f^- \cong \I(\ma_\p^*).
\end{align*}
Since $\Gamma$ is uniquely 2-divisible, let
\begin{align*}
    \chi_\Gamma^{1/2} : G_{F,S} \twoheadrightarrow \Gamma \xrightarrow{1/2} \Gamma \hookrightarrow \Lambda^*
\end{align*}
be the character induced by $\chi_\Gamma$. Then using the fact that our Hilbert modular form $f$ is of parallel weight 2 and trivial character, \cite[Section 12.7]{Nekovar} constructs the following $\I[G_F]$- and respectively, $\I[G_\p]$- modules
\begin{align*}
    \T_\f := T_\f \otimes \chi_\Gamma^{-1/2}, \hspace{5mm} \T_\f^\pm := T_\f^\pm \otimes \chi_\Gamma^{-1/2}
\end{align*}
which fit in the following exact sequence
\begin{align*}
    0 \rightarrow \T_\f^+ \rightarrow \T_\f \rightarrow \T_\f^- \rightarrow 0.
\end{align*}
Moreover, we have that
\begin{align*}
    \T_\f \otimes \I_P/P\I_P & \cong V_p(E) \otimes L =: V_f
\end{align*}
where $L$ is a finite extension of $\qp$ and $V_p(E) := T_p(E) \otimes \qp$ with $T_p(E) := \varprojlim_n E(\overline{\q})[p^n]$ being the Tate module of $E/F$. Furthermore, $\T_\f$ is self-dual. More precisely, \cite[Section 12.7.12]{Nekovar} shows the existence of a skew-symmetric bilinear form
\begin{align*}
    \pi : \T_\f \otimes \T_\f \rightarrow \I(1)
\end{align*}
such that
\begin{align*}
    adj(\pi) : \T_\f \xrightarrow{\sim} \T_\f^*(1) := \mathrm{Hom}_{\I}(\T_\f,\I)(1)
\end{align*}
is an isomorphism. Moreover, we have an isomorphism of exact sequences induced by $adj(\pi)$
$$
\begin{tikzcd}
	0 \arrow[r] & 
    \T_\f^+ \arrow[r] \arrow[d,"\sim"] &
	\T_\f \arrow[r,'] \arrow[d,"\sim"] &
	\T_\f^- \arrow[r] \arrow[d,"\sim"] &
	0
	\\
	0 \arrow[r] &
	(\T_\f^-)^*(1) \arrow[r] &
	\T_\f^*(1) \arrow[r] &
	(\T_\f^+)^*(1) \arrow[r] &
	0.
\end{tikzcd}
$$
Let $X_\f \in \{T_\f,\T_\f\}$. For every prime $v$ of $\overline{\q}$ dividing $\p$, we consider $X_{\f,v}^\pm := X_\f^\pm$  as an $\I[G_v]$-module using the fixed embedding $i_v : \overline{\q} \hookrightarrow \overline{\qp}$.
\\
Let $(\cdot,\cdot)_W : V_f \times V_f \rightarrow L$ be the morphism induced by the Weil pairing on $E$.  We assume that $\pi(x \otimes y) \hspace{1mm} mod \hspace{1mm} \varpi = (x \hspace{1mm} mod \hspace{1mm} \varpi, y \hspace{1mm} mod \hspace{1mm} \varpi)_W$.

\section{Iwasawa Main Conjecture of Wan} \label{Wan}
In this section, we recall the main result proven by Wan in \cite[Theorem 3]{Wan}.
\\
Let $K$ be a totally imaginary quadratic extension of $F$ such that $\p$ splits in $K$. First, we discuss the objects in the Iwasawa main conjecture of Wan.

\subsection{Wan's $p$-adic $L$-Function} \label{K_infty}
Let $F_\infty$ be the cyclotomic $\zp$-extension of $F$ and let $\Gamma_F := Gal(F_\infty/F) \cong \zp$ denote its Galois group.  Let $K_\infty^-$ be the maximal abelian anticyclotomic $\zp$-power extension of $K$ unramified outside $p$ with Galois group $\Gamma_K^- := Gal(K_\infty^-/K) \cong \zp^2$. Denote by $K_\infty^+$ the cyclotomic $\zp$-extension of $K$, $K_\infty^+ := F_\infty K$ and its Galois group by $\Gamma_K^+ := Gal(K_\infty^+/K)$ which can be identified with $\Gamma_F$. Let $K_\infty := K_\infty^- K_\infty^+$. Then $\Gamma_K := Gal(K_\infty/K) \cong \zp^3$. Let $\I_K := \I[[\Gamma_K]]$.  Then for the Hida family $\f$ passing through $f$, in \cite[Section 7.3]{Wan}, Wan constructs a $p$-adic $L$-function $\mathcal{L}^S_K(\f) \in \I_K$ whose specialization at $\psi \in \mathfrak{X}^{arith}(\I)$ is equal (up to multiplication by a unit) to the product $L_p^S(f_\psi) \cdot L_p^S(f_\psi \otimes \epsilon_K)$, where $\epsilon_K$ is the quadratic character attached to $K/F$ and $L_p^S(f_\psi)$ and $L_p^S(f_\psi \otimes \epsilon_K)$ are the $p$-adic $L$-functions in \cite{loeffler2020iwasawatheoryquadratichilbert}. More precisely, let $\psi \in \mathfrak{X}^{arith}(\I)$ and write $\psi^{cy} : \I_K = \I[[\Gamma_K^+ \times \Gamma_K^-]] \rightarrow \overline{\qp}[[\Gamma_K^+]]$ for the morphism which is equal to $\psi$ when restricted to $\I$ and which takes $\Gamma_K^-$ to 1. Then as shown in \cite[Proposition 13.3.2]{loeffler2020iwasawatheoryquadratichilbert} there exists a unit $c$ in $\mcO[[\Gamma_F]]$ such that
\begin{align} \label{FactorisationInLZ}
   \psi^{cy}(\mathcal{L}^S_K(\f)) = c \cdot L_p^S(f_\psi) \cdot L_p^S(f_\psi \otimes \epsilon_K).
\end{align}

\subsection{Selmer Groups} \label{Wan Sel Gp}
First, we define the $S$-imprimitive Selmer groups in a general setting.
\\
Let $A$ be a profinite $\zp$-algebra and let $T$ be a free module of finite rank over $A$. Suppose $G_F$ acts continuously on $T$. Suppose for every prime we are given a free rank one $G_\p$-stable $A$-submodule $T_\p^+$ of $T$, i.e., we have an exact sequence
\begin{align*}
    0 \rightarrow T_\p^+ \rightarrow T \rightarrow T_\p^- \rightarrow 0.
\end{align*}
For any $\mathfrak{a} \in Spec(A)$ define
\begin{align*}
    Sel^S_F(T,\mathfrak{a}) := ker \left(H^1(F,T \otimes_A A^*[\mathfrak{a}]) \rightarrow \prod_{v \notin S} H^1(I_v,T \otimes_A A^*[\mathfrak{a}]) \times H^1(I_\p,T_\p^- \otimes_A A^*[\mathfrak{a}]) \right)
\end{align*}
where $I_v$ is the inertia group at $v$ for every place $v$ and $A^*$ denotes the Pontryagin dual of $A$. Now assume that $S$ contains all primes at which $T$ is ramified. Put 
\begin{align*}
    X^S_F(T,\mathfrak{a}) := \mathrm{Hom}_A(Sel^S_F(T,\mathfrak{a}),A^*[\mathfrak{a}]).
\end{align*}
Define
\begin{align*}
    Sel^S_K(T,\mathfrak{a}) := Sel^{S_K}_K(T,\mathfrak{a}), \hspace{5mm} X^S_K(T,\mathfrak{a}) := X^{S_K}_K(T,\mathfrak{a}),
\end{align*}
where $S_K$ is the set of places of $K$ over those in $S$ and if $w|\p$ then $T_w = g_wT_\p$ for $g_w \in G_F$ such that $g_w^{-1}G_{K,w}g_w \subset G_{F,\p}$.
\\
If $F''/F$ is an infinite extension, set
\begin{align*}
    Sel^S_{F''}(T,\mathfrak{a}) := \varinjlim_{F \subset F' \subset F''} Sel^S_{F'}(T,\mathfrak{a}), \hspace{5mm} X^S_{F''}(T,\mathfrak{a}) := \varprojlim_{F \subset F' \subset F''} X^S_{F'}(T,\mathfrak{a}),
\end{align*}
where $F'$ runs over the finite extensions of $F$ contained in $F''$.
\\
Now suppose that $A$ is a Krull domain and that $X$ is a finitely generated $A$-module. If $X$ is a torsion module, define the characteristic ideal of $X$ by
\begin{align*}
    \mathrm{char}_A(X) := \{ x \in A | \mathrm{ord}_Q(x) \geq \mathrm{len}_Q(X) \hspace{1mm} \text{for every height 1 prime} \hspace{1mm} Q \subset A \}.
\end{align*}
Now let $\mathcal{K}/K$ be a $\zp^r$ extension for some integer $r \geq 0$. Write $\I_\mathcal{K} := \I[[Gal(\mathcal{K}/K)]]$. Recall that we have the representation space $T_\f$ attached to the Hida family $\f$ and a free rank 1 $\I$-submodule $T_\f^+$ of $T_\f$, i.e., we have the exact sequence
\begin{align} \label{FiltrationOfSelfDualRep}
    0 \rightarrow T_\f^+ \rightarrow T_\f \rightarrow T_\f^- \rightarrow 0.
\end{align}
Now define
\begin{align*}
    T_\f(\mathcal{K}) & := T_\f \otimes_\I \I_\mathcal{K}(\epsilon_\mathcal{K}^{-1}) \\
    T_\f(\mathcal{K})^\pm & := T_\f^\pm \otimes_\I \I_\mathcal{K}(\epsilon_\mathcal{K}^{-1}),
\end{align*}
where $\epsilon_\mathcal{K} : G_K \twoheadrightarrow Gal(\mathcal{K}/K) \subset \I_\mathcal{K}^*$. Then (\ref{FiltrationOfSelfDualRep}) induces an exact sequence
\begin{align*}
    0 \rightarrow T_\f(\mathcal{K})^+ \rightarrow T_\f(\mathcal{K}) \rightarrow T_\f(\mathcal{K})^- \rightarrow 0.
\end{align*}
As $\mathcal{K}/K$ is unramified outside $p, T_\f(\mathcal{K})$ is unramified for every $v \not\in S$, so that $T_\f(\mathcal{K})$ is an $\I_\mathcal{K}[G_{F,S}]$-module. Then for any $\mathfrak{a} \in Spec(\I_\mathcal{K})$, the Selmer group $Sel^S_\mathcal{K}(\f,\mathfrak{a}) := Sel^S_\mathcal{K}(T_\f(\mathcal{K}),\ma)$ is defined and we can consider the characteristic ideal $\mathrm{char}^S_\mathcal{K}(\f) \subset \I_\mathcal{K}$ of the dual Selmer group $X^S_\mathcal{K}(\f,\mathfrak{a}) := X^S_\mathcal{K}(T_\f(\mathcal{K}),\mathfrak{a})$. When $\mathfrak{a} = 0$, write $Sel^S_\mathcal{K}(\f) := Sel^S_\mathcal{K}(\f,\mathfrak{a})$ and $X^S_\mathcal{K}(\f) := X^S_\mathcal{K}(\f,\mathfrak{a})$.

\subsection{The Main Result} 
Now we are ready to state the main result of \cite{Wan}. Recall that $p > 5$ is a rational prime. Let $L/\qp$ be a finite extension. Let $\f$ be the Hida family passing through $f$. Decompose $\mathfrak{n} = \mathfrak{n}^+\mathfrak{n}^-$ in such a way that $\mathfrak{n}^+$ (resp. $\mathfrak{n}^-$) is divisible only by the primes split (resp. inert) in $K$.
Then in \cite[Theorem 101]{Wan}, Wan proves the following.

\begin{thm} \label{WanMainResult}
    Assume the following
\begin{enumerate}
    \item $\mathfrak{n}^-$ is square-free and it has an even number of prime divisors.

    \item $\overline{\rho}_\f$ is ramified at all $v|\mathfrak{n}^-$.

    \item \textbf{(irred)}: The residual representation $\overline{\rho}_\f$ is absolutely irreducible.

    \item \textbf{(MML)} : There is a minimal modular lifting of $\overline{\rho}_\f$. 

    \item  $K$ is not contained in the narrow Hilbert class field of $F$ (i.e., there is a finite prime of $F$ which ramifies in $K$).

    \item All the primes ramified in $F/\q$ are split in $K$.
\end{enumerate}
Then we have
    \begin{align*}
        \mathrm{char}^S_{K_\infty}(\f) \subset (\mathcal{L}^S_K(\f)).
    \end{align*}
\end{thm}

\section{Central Critical $p$-adic $L$-Function, Selmer Group and a Control Theorem}
In this section, we discuss the central critical $p$-adic $L$-function and the central critical Selmer group. Following \cite{Rodolfo}, we extend his three-variable central critical $p$-adic $L$-function to the four-variable case. Using a control theorem proved in Section \ref{control theorem}, we relate the order of vanishing of this $p$-adic $L$-function with the length of the Pontryagin dual of the corresponding Selmer group.
\subsection{Central Critical $p$-adic $L$-Function} \label{Central Critical Def}
Denote by $\mathcal{A}(U \times \zp \times \zp \times \zp) \subset \overline{\qp}[[k-2,s-1,r-1,t-1]]$ the subring consisting of formal power series converging for every $(k,s,r,t) \in U \times \zp \times \zp \times \zp$. Let $\chi_{cy} : \Gamma_K^+ \cong 1+p\zp$ be the $p$-adic cyclotomic character and fix an isomorphism $\chi_{acy,1} \times \chi_{acy,2} : \Gamma_K^- \cong (1+p\zp) \times (1+p\zp)$. Recall the morphism $M$ from (\ref{def of M}). We can uniquely extend $M$ to a ring homomorphism
\begin{align*}
    \widetilde{M} : \I[[\Gamma_K^+ \times \Gamma_K^-]] & \rightarrow \mathcal{A}(U \times \zp \times \zp \times \zp) \\ 
    \widetilde{M}(\sigma) & := \chi_{cy}(\sigma)^{s-1}, \text{if} \hspace{1.5mm} \sigma \in \Gamma_K^+ \\
    \widetilde{M}(\sigma) & := \chi_{acy,1}(\sigma)^{r-1}\chi_{acy,2}(\sigma)^{t-1}, \text{if} \hspace{1.5mm} \sigma \in \Gamma_K^-.
\end{align*}
Define the $S$-primitive analytic four-variable $p$-adic $L$-function of $f_\infty/K$ as
\begin{align*}
   L_p^S(f_\infty/K,k,s,r,t) := \widetilde{M}(\mathcal{L}_K^S(\f)) \in \mathcal{A}(U \times \zp \times \zp \times \zp).
\end{align*}
Let $\mathfrak{l} \neq \mathfrak{p}$ be a prime of $F$ in $S$. Recall that we denoted the weight $k$-specialisation of $f_\infty$ by $f_k$ . Define the central critical $\mathfrak{l}$-Euler factor of $f_\infty/K$ as
\begin{align*}
    E_{\mathfrak{l}}(f_\infty/K,k) := \left(1 - \frac{C(\mathfrak{l},k)}{\langle \mathfrak{l} \rangle_F^{k/2} \omega_F(\mathfrak{l})} + \frac{\mathbf{1}_\mathfrak{n}(\mathfrak{l})}{N(\mathfrak{l})} \right) \cdot \left(1 - \frac{\epsilon_K(\mathfrak{l})C(\mathfrak{l},k)}{\langle \mathfrak{l} \rangle_F^{k/2} \omega_F(\mathfrak{l})} + \frac{\mathbf{1}_{\mathfrak{n}D_K}(\mathfrak{l})}{N(\mathfrak{l})} \right) \in \mathcal{A}(\zp),
\end{align*}
where $C(\mathfrak{l},k)$ are the Fourier coefficients of the Hilbert modular form $f_k$, $N := N_{F/\q}$ denotes the field norm, $\epsilon_K$ is the quadratic idele class character attached to $K$, $D_K$ is the discriminant of $K$ and $\mathbf{1}_\mathfrak{m}(\mathfrak{a}) = \begin{cases}
    1 & \mathfrak{a} \nmid \mathfrak{m} \\
    0 & \text{otherwise}.
\end{cases}$
\\
Then for every $\kappa \in U \cap \z_{\geq 2}$ with $\kappa \equiv 2 \hspace{1mm} mod \hspace{1mm} 2(p-1)$,
\begin{align*}
    E_{\mathfrak{l}}(f_\infty/K,\kappa) = E_{\mathfrak{l}}(f_\kappa,\mathfrak{l}^{-\kappa/2})E_{\mathfrak{l}}(f_\kappa \otimes \epsilon_K,\mathfrak{l}^{-\kappa/2})
\end{align*}
where for $\psi \in \{1,\epsilon_K\}$ and for a cusp form $g$ of parallel weight $k$, we define
\begin{align*}
    E_{\mathfrak{l}}(g,\psi,\mathfrak{l}^{-s}) := \left(1 - \frac{\psi(\mathfrak{l})C(\mathfrak{l},g)}{N(\mathfrak{l})^{-s}} + \frac{\mathbf{1}_*(\mathfrak{l})}{N(\mathfrak{l})^{2s+1-k}} \right)
\end{align*}
as the $\mathfrak{l}$-th Euler factor of the complex $L$-function of $g$ twisted by $\psi$, i.e., $L(g,\psi,s) = \prod_\mathfrak{q} E_\mathfrak{q}(g,\psi,\mathfrak{q}^{-s})^{-1}$.
\\
Define the central critical $S$-Euler factors of $f_\infty/K$ by 
\begin{align*}
    E_S(f_\infty/K,k) := \prod  E_{\mathfrak{l}}(f_\infty/K,k)
\end{align*}
where the product runs over all the primes in $S$ other than $\p$. Then for all $\mathfrak{l}|\mathfrak{n}D_K$ we have that $ E_{\mathfrak{l}}(f_\infty/K,2) \neq 0$. So upto shrinking $U$, if necessary, we can assume that $ E_{\mathfrak{l}}(f_\infty/K,k) \in \mathcal{A}(U)^\times$ for every $\mathfrak{l} \neq \p$ in $S$.
\\
Finally, define the central critical $p$-adic $L$-function of $f_\infty/K$:
\begin{align*}
    L_p^{cc}(f_\infty/K,k) := E_S(f_\infty/K,k)^{-1} \cdot L_p^S(f_\infty/K,k,k/2,1,1) \in \mathcal{A}(U).
\end{align*}

\subsection{Central Critical Selmer Group} \label{cc Sel Gp}
Fix topological generators $\gamma_+ \in \Gamma_K^+$ and $\gamma \in \Gamma$ and suppose that $\gamma_{1}$ and $\gamma_2$ generate $\Gamma_K^-$ topologically. Write $\varpi_? := \gamma_? -1$ for $? \in \{+,1,2\}$. Assume that $\chi_{cy}(\gamma_+) = \gamma$ where $\chi_{cy} : \Gamma_K^+ \cong 1+p\zp \cong \Gamma$. Let
\begin{align*}
    \Theta_K^+ : Gal(K_\infty/K) = \Gamma_K^+ \times \Gamma_K^- \twoheadrightarrow \Gamma_K^+ \xrightarrow[\chi_{cy}]{\cong} \Gamma \xrightarrow{\sqrt{\cdot}} \Gamma \rightarrow \I^\times
\end{align*}
be the character induced by $\chi_{cy}$. Extend $\Theta_K^+$ uniquely to a morphism of $\I$-algebras,
\begin{align*}
    \Theta_K^+ : \I_K \rightarrow \I.
\end{align*}
Then we can see that $\mathfrak{p}_{cc} := ker(\Theta_K^+ : \I_K \rightarrow \I) = (\varpi_{cc},\varpi_1,\varpi_2) \cdot \I_K$, where $\varpi_{cc} := [\gamma] - \gamma_+^2 \in \I_K$. Moreover, this also gives the isomorphism $\ikip/\vp_{cc} \cong \I$ when we restrict to $\Theta_K := \Theta_K^+|_{\ikip}$. 
\\
We define the (cyclotomic) $S$-primitive central critical (non-strict) Greenberg Selmer group of $\f/K$ by
\begin{align*}
    Sel^{S,cc}_{F_\infty}(\f/K) := ker\left(H^1(G_{K,S_K},\T_\f \otimes_\I \I^*) \xrightarrow{\prod_{w|\p}p_{w_*}^-\circ \hspace{0.1mm} res_w} \prod_{w|\p} H^1(I_w,\T_{\f,w}^- \otimes_\I \I^*) \right).
\end{align*}
Write $X^{S,cc}_{F_\infty}(\f/K) := \mathrm{Hom}_{\zp}( Sel^{S,cc}_{F_\infty}(\f/K),\qp/\zp).$

\subsection{Control Theorem} \label{control theorem}
Now we prove a control theorem relating the Pontryagin duals of the Selmer groups of sections \ref{Wan Sel Gp} and \ref{cc Sel Gp}.

\begin{prop} \label{ControlTheorem}
    There exists a canonical isomorphism of $\I$-modules
    \begin{align*}
        X_{K_\infty}^S(\f) \otimes_{\I_{\kinf}} \I_{\kinf}/\mathfrak{p}_{cc} \cong X_{F_\infty}^{S,cc}(\f/K).
    \end{align*}
\end{prop}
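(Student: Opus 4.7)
The plan is to reduce, via Pontryagin duality, to an isomorphism of the underlying discrete Selmer groups. For a cofinitely generated discrete $\ikinf$-module $M$ one has the standard identification $(M[\mathfrak{p}_{cc}])^\vee \cong M^\vee \otimes_{\ikinf} \ikinf/\mathfrak{p}_{cc}$, and applying this to $M = Sel^S_{K_\infty}(\f)$ reduces the proposition to exhibiting a canonical isomorphism
\[
Sel^S_{K_\infty}(\f)[\mathfrak{p}_{cc}] \;\cong\; Sel^{S,cc}_{F_\infty}(\f/K)
\]
of discrete $\I$-modules, compatible with the defining local conditions.

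The first step is to identify the Galois module $(T_\f(K_\infty) \otimes_{\ikinf} \ikinf^*)[\mathfrak{p}_{cc}]$. Since $\ikinf/\mathfrak{p}_{cc} \cong \I$ via $\Theta_K^+$, dualizing gives $\ikinf^*[\mathfrak{p}_{cc}] \cong \I^*$, and the twist $T_\f(K_\infty) = T_\f \otimes_\I \ikinf(\epsilon_{K_\infty}^{-1})$ collapses to $T_\f \otimes_\I \I^*$ with $G_K$ acting via the composed character $\Theta_K^+ \circ \epsilon_{K_\infty}^{-1}$. I would then verify that this character agrees with $\chi_\Gamma^{-1/2}|_{G_K}$, using the defining relation $\chi_{cy}(\gamma_+) = \gamma$, the triviality of $\Theta_K^+$ on $\Gamma_K^-$, and the square-root map defining $\Theta_K^+|_{\Gamma_K^+}$. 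Hence $(T_\f(K_\infty) \otimes_{\ikinf} \ikinf^*)[\mathfrak{p}_{cc}] \cong \T_\f \otimes_\I \I^*$ as $G_{K,S_K}$-modules, and the parallel argument for the minus-filtration yields $(T_\f(K_\infty)^- \otimes \ikinf^*)[\mathfrak{p}_{cc}] \cong \T_{\f,w}^- \otimes \I^*$ at each prime $w \mid \p$.

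The second step is to transfer these Galois-module identifications through $H^1$. Writing $\mathfrak{p}_{cc} = (\varpi_{cc}, \varpi_1, \varpi_2)$, which is a regular sequence of length three in $\ikinf$, I would successively apply the long exact sequences in Galois cohomology associated with each generator. A Koszul-style argument then produces a canonical identification
\[
H^1(G_{K,S_K}, T_\f(K_\infty) \otimes_{\ikinf} \ikinf^*)[\mathfrak{p}_{cc}] \;\cong\; H^1(G_{K,S_K}, \T_\f \otimes_\I \I^*),
\]
provided the intermediate $H^0$-terms vanish. A Nakayama-type reduction brings this down to $H^0(G_{K,S_K}, V_p(E) \otimes L) = 0$, which follows from the (irred) hypothesis. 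The same procedure applied to the inertia group $I_w$ yields the analogous local identification $H^1(I_w, T_\f(K_\infty)^- \otimes \ikinf^*)[\mathfrak{p}_{cc}] \cong H^1(I_w, \T_{\f,w}^- \otimes \I^*)$; here the $H^0$-vanishing uses the description of $T_\f^-$ as $\I(\ma_\p^*)$ together with the split multiplicative reduction hypothesis.

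A snake-lemma diagram chase, applied to the short exact sequences defining each Selmer group and assembled with the isomorphisms on global and local cohomology, then produces the desired isomorphism $Sel^S_{K_\infty}(\f)[\mathfrak{p}_{cc}] \cong Sel^{S,cc}_{F_\infty}(\f/K)$. Pontryagin dualizing concludes the proof. The principal obstacle I anticipate lies in the $H^0$-vanishing at the inertia level at primes above $\p$: the Greenberg filtration interacts with the twist by $\epsilon_{K_\infty}^{-1}$ in a delicate way, and the conditions (irred), (dist)$_\f$, and the split multiplicative reduction must all combine to rule out spurious invariant classes arising at the intermediate quotients along the three generators of $\mathfrak{p}_{cc}$.
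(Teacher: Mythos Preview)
Your overall strategy matches the paper's: reduce via Pontryagin duality to an isomorphism of Selmer groups, identify the coefficient modules after killing $\mathfrak{p}_{cc}$, and push this through $H^1$ via the long exact sequences. The paper breaks $\mathfrak{p}_{cc}$ in two stages (first $(\varpi_1,\varpi_2)$, quoting \cite[Corollary~17]{Wan} for the passage $K_\infty \to K_\infty^+$, then $(\varpi_{cc})$), whereas you propose to handle all three generators at once; this is a cosmetic difference.

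The genuine gap is in your local step at $w\mid\p$. The module $H^0(I_w, T_\f(K_\infty^+)^-_w \otimes \ikip^*)$ does \emph{not} vanish. Since $\ma_\p^*$ is unramified, inertia acts on $\ikip^*(\ma_\p^*\epsilon_{K_\infty^+}^{-1})$ only through $\epsilon_{K_\infty^+}^{-1}$, and one computes the invariants to be $\ikip^*(\ma_\p^*)[\gamma_+-1] \cong \I^*(\ma_\p^*)$, which is large. Split multiplicative reduction makes this worse, not better: it forces $P(\ma_\p)=1$, so the residual action is trivial and neither \textbf{(irred)} nor \textbf{(dist)$_\f$} can kill these invariants. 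What the paper actually uses is that $\varpi_{cc}=[\gamma]-\gamma_+^2$ acts on $\I^*(\ma_\p^*)$ (where $\gamma_+$ already acts trivially) as $[\gamma]-1$, and $\I^*$ is $([\gamma]-1)$-divisible; hence the \emph{quotient} $H^0/\varpi_{cc}$ vanishes even though $H^0$ itself does not, and this is exactly what is needed for $\ker(\alpha_3)=0$ in the snake-lemma diagram. Once you replace ``$H^0$-vanishing'' by ``$\varpi_{cc}$-divisibility of $H^0$'', the rest of your argument goes through.
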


\begin{proof}
    Consider the following ideals
    \begin{align*}
        \ma_1 := (\vp_1,\vp_2) \in Spec(\I_{\kinf}), \hspace{5mm} \ma_2 := (\vp_{cc}) \in Spec(\ikip).
    \end{align*}
    Now
    \begin{align*}
         T_\f(\kinf) \otimes_{\ikinf} \ikinf^*[\ma_1] & \cong T_\f(\kinf)/\ma_1 \otimes_{\ikinf/\ma_1} \ikip^* \\ 
        & \cong T_\f(\kip) \otimes_{\ikip} \ikip^*, \\ 
        T_\f(\kinf)^-_w \otimes_{\ikinf} \ikinf^*[\ma_1] & \cong T_\f(\kinf)_w^-/\ma_1 \otimes_{\ikinf/\ma_1} \ikip^* \\ 
        & \cong T_\f(\kip)^-_w \otimes_{\ikip} \ikip^*,
    \end{align*}
    for every $w|\p$ because $\I_{\kinf}/\ma_1 \cong \I_{\kip}$ and $T_\f(\kinf)/\ma_1 \cong T_\f(\kip)$. This gives us
    \begin{align*}
        Sel^S_{\kip}(\f) \cong Sel^S_{\kinf}(\f,\ma_1)
    \end{align*}
    which, in turn, gives us the isomorphism of the Pontryagin duals :
    \begin{align*}
       X^S_{\kip}(\f) \cong X_{\kinf}^S(\f) \otimes_{\ikinf} \ikinf/\ma_1.
    \end{align*}
    Since $\Theta_K \circ \epsilon_{\kip}^{-1} = \chi_\Gamma^{-1/2}$ on $G_{K,S_K}$, we have
    \begin{align*}
        T_\f(\kip)/\ma_2 & \cong T_\f(\kip) \otimes_{\ikip} \ikip/\ma_2 \\
        & \cong T_\f(\kip) \otimes_{\ikip,\Theta_K} \I = T_\f \otimes_\I \ikip(\epsilon_{\kip}^{-1}) \otimes_{\ikip,\Theta_K} \I \cong \T_\f.
    \end{align*}
    Similarly, we also have for every $w|\p$
    \begin{align*}
        T_\f(\kip)^\pm_w/\ma_2 \cong \T_{\f,w}^\pm.
    \end{align*}
    This gives us that 
    \begin{align*}
        T_\f(\kip) \otimes_{\ikip} \ikip^*[\ma_2] & \cong T_\f(\kip)/\ma_2 \otimes_{\ikip/\ma_2} \I^* \\ 
        & \cong \T_\f \otimes_\I \I^*
    \end{align*}
    and similarly, for all $w|\p$,
    \begin{align*}
        T_\f(\kip)^\pm_w \otimes_{\ikip} \ikip^*[\ma_2] \cong \T_{\f,w}^\pm \otimes_\I \I^*
    \end{align*}
    and in particular, that
    \begin{align*}
        Sel_{\kip}^S(\f,\ma_2) \cong Sel_{F_\infty}^{S,cc}(\f/K).
    \end{align*}
    Consider the following commutative diagram

    $$
    \begin{tikzcd}[sep=1.8em, font=\small]
	0 \arrow[r] & 
    Sel_{\kip}^S(\f,\ma_2) \arrow[r] \arrow[d,"\alpha_1"] &
	  H^1(G_{K,S_K},T_\f(\kip) \otimes_{\ikip} \ikip^*[\ma_2]) \arrow[r,'] \arrow[d,"\alpha_2"] &
	\prod_{w|\p} H^1(I_w,T_\f(\kip)^-_w \otimes_{\ikip} \ikip^*[\ma_2]) \arrow[d,"\alpha_3"] 
	\\
	0 \arrow[r] &
	(Sel^S_{\kip}(\f))[\ma_2] \arrow[r] &
	(H^1(G_{K,S_K},T_\f(\kip) \otimes_{\ikip} \ikip^*))[\ma_2] \arrow[r] &
	(\prod_{w|\p} H^1(I_w,T_\f(\kip)^-_w \otimes_{\ikip} \ikip^*)[\ma_2].
    \end{tikzcd}
    $$

    We claim that if $\alpha_1$ is an isomorphism, we are done. For, we then get $Sel_{F_\infty}^{S,cc}(\f/K) \cong Sel_{\kip}^S(\f)[\ma_2]$ which gives us
    \begin{align*}
        X_{F_\infty}^{S,cc}(\f/K) & \cong X_{\kip}^S(\f) \otimes_{\ikinf} \ikinf/\ma_2 \\
        & \cong X_{\kinf}^S(\f) \otimes_{\ikinf} \ikinf/\ma_1 \otimes_{\ikinf} \ikinf/\ma_2 \\
        & \cong X_{\kinf}^S(\f) \otimes_{\ikinf} \ikinf/\mathfrak{p}^{cc}.
    \end{align*}
    The second isomorphism follows from \cite[Corollary 17]{Wan}. So now it only remains to show that $\alpha_1$ is an isomorphism. We have an exact sequence
    \begin{align*}
        0 \rightarrow \ikip^*[\ma_2] \rightarrow \ikip^* \xrightarrow{\varpi_{cc}} \ikip^* \rightarrow 0
    \end{align*}
    which induces a short exact sequence
    \begin{align*}
        0 \rightarrow H^0(G_{K,S_K},T_\f(\kip) \otimes_{\ikip} \ikip^*)/\varpi_{cc} & \rightarrow H^1(G_{K,S_K},T_\f(\kip) \otimes_{\ikip} \ikip^*[\ma_2]) \\
        & \xrightarrow{\alpha_2} H^1(G_{K,S_K},T_\f(\kip) \otimes_{\ikip} \ikip^*)[\ma_2] \rightarrow 0.
    \end{align*}
    The hypothesis \textbf{(irred)} implies that $\overline{\rho_\f}|_{G_K}$ is irreducible. So $H^0(G_{K,S_K},T_\f(\kip) \otimes_{\ikip} \ikip^*) = 0$. Hence, $\alpha_2$ is an isomorphism. So in order to prove that $\alpha_1$ is an isomorphism it is enough to show that $ker(\alpha_3) = 0$. As earlier, we have a short exact sequence
    \begin{align*}
        0 \rightarrow \prod_{w|\p} H^0(I_w,T_\f(\kip)^-_w \otimes_{\ikip} \ikip^*)/\varpi_{cc} & \rightarrow \prod_{w|\p} H^1(I_w,T_\f(\kip)^-_w \otimes_{\ikip} \ikip^*[\ma_2]) \\
        & \xrightarrow{\alpha_3} \left(\prod_{w|\p} H^1(I_w,T_\f(\kip)^-_w \otimes_{\ikip} \ikip^*) \right)[\ma_2] \rightarrow 0
    \end{align*}
    using which we get that $ker(\alpha_3) = \prod_{w|\p} H^0(I_w,T_\f(\kip)^-_w \otimes_{\ikip} \ikip^*) \otimes_{\ikip} \ikip/\varpi_{cc}$. We show that each module in the product in $ker(\alpha_3)$ is trivial. For this it suffices to show that each $H^0(I_w,T_\f(\kip)^-_w \otimes_{\ikip} \ikip^*)$ is $\vp_{cc}$-divisible.  Since
    \begin{align*}
        T_\f(\kip)^-_w \otimes_{\ikip} \ikip^* & = T_{\f,w}^- \otimes_\I \ikip(\epsilon_{\kip}^{-1}) \otimes_{\ikip} \ikip^* \\
        & \cong \ikip^*(\ma_\p^*\epsilon_{\kip}^{-1}),
    \end{align*}
    $H^0(I_w,T_\f(\kip)^-_w \otimes_{\ikip} \ikip^*) = \ikip^*(\ma_\p^*)[\gamma_+ - 1] = \I^*(\ma_\p^*)$. \\
    Now $\varpi_{cc} = [\gamma] - \gamma^2_+$ acts as $[\gamma] - 1$ on $\I^*$. So $\I^*$ and hence each $H^0(I_w,T_\f(\kip)^-_w \otimes_{\ikip} \ikip^*)$ is $\varpi_{cc}$-divisible. As a result we get that $ker(\alpha_3) = 0$.
\end{proof}

\begin{cor} \label{ineq2}
    Assume \textbf{(irred)} and the hypotheses in Theorem \ref{WanMainResult}. Then
    \begin{align*}
        \mathrm{ord}_{k=2} L_p^{cc}(f_\infty/K,k) \leq \mathrm{len}_{P}(X_{F_\infty}^{S,cc}(\f/K)).
    \end{align*}
\end{cor}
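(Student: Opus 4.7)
The plan is to combine Wan's Iwasawa main conjecture divisibility $char^S_{K_\infty}(\f) \subset (\mcL^S_K(\f))$ in $\I_{K_\infty}$ with the Control Theorem (Proposition~\ref{ControlTheorem}), transferring the statement down to $\I$ via the specialization $\Theta_K^+ : \I_{K_\infty} \twoheadrightarrow \I$ and the good behaviour of Fitting ideals under base change.

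First, I would use the standard inclusion $Fitt_{\I_{K_\infty}}(X^S_{K_\infty}(\f)) \subset char^S_{K_\infty}(\f)$, valid for any finitely generated torsion module over a Krull domain, together with Wan's theorem, to obtain
\[
Fitt_{\I_{K_\infty}}\bigl(X^S_{K_\infty}(\f)\bigr) \subset (\mcL^S_K(\f)).
\]
Applying the surjection $\Theta_K^+$, whose kernel is $\mathfrak{p}_{cc}$, and invoking both the commutativity of Fitting ideals with arbitrary base change and the isomorphism $X^S_{K_\infty}(\f) \otimes_{\I_{K_\infty}} \I \cong X^{S,cc}_{F_\infty}(\f/K)$ provided by Proposition~\ref{ControlTheorem}, this becomes
\[
Fitt_\I\bigl(X^{S,cc}_{F_\infty}(\f/K)\bigr) \subset \bigl(\Theta_K^+(\mcL^S_K(\f))\bigr) \quad \text{in } \I.
\]

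Next, localize at the height-one prime $P$. Since $\ip$ is a DVR, both sides become principal ideals there, and the containment is equivalent to $ord_P(\Theta_K^+(\mcL^S_K(\f))) \leq ord_P(Fitt_\I(X^{S,cc}_{F_\infty}(\f/K)))$. For a finitely generated torsion module over a DVR the Fitting ideal is generated by the product of its invariant factors, so the right-hand side equals $len_P(X^{S,cc}_{F_\infty}(\f/K))$. Finally, a direct unwinding of the definitions shows that $M \circ \Theta_K^+ : \I_{K_\infty} \to \mathcal{A}(U)$ coincides with the specialization $\widetilde{M}\rvert_{(s,r,t) = (k/2,\, 1,\, 1)}$ --- the square-root appearing in $\Theta_K^+$ precisely matches the exponent $s-1 = k/2 - 1$ after applying $M$ --- so $M(\Theta_K^+(\mcL^S_K(\f))) = L_p^S(f_\infty/K, k, k/2, 1, 1)$. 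Combined with the unitality of the Euler factor $E_S(f_\infty/K, k) \in \mathcal{A}(U)^\times$, this yields $ord_{k=2}\, L_p^{cc}(f_\infty/K, k) = ord_P(\Theta_K^+(\mcL^S_K(\f)))$, which completes the proof.

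The main subtlety I anticipate is ensuring that $X^{S,cc}_{F_\infty}(\f/K)$ is of finite length as an $\ip$-module, so that $len_P$ is meaningful and the Fitting-length identity over the DVR genuinely applies (otherwise the inequality is vacuous). A secondary piece of bookkeeping is tracking the half-twist $\chi_\Gamma^{-1/2}$ built into the self-dual representation $\T_\f$ through the chain of specializations, since this is precisely the feature that makes $\Theta_K^+$ compatible with the central critical substitution $s = k/2$.
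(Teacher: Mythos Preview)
Your proof is correct and follows essentially the same architecture as the paper's: Wan's divisibility at the $K_\infty$-level, descent to $\I$ via $\Theta_K^+$ and the Control Theorem, localization at $P$, and the identification $M \circ \Theta_K^+ = \widetilde{M}|_{(s,r,t)=(k/2,1,1)}$ to convert $ord_P$ into $ord_{k=2}$. The one substantive variation is in the descent step: the paper stays with characteristic ideals and invokes \cite[Corollary~3.8]{SkinnerUrban} to justify $char(X^S_{K_\infty}(\f)/\mathfrak{p}_{cc}) \subset char(X^S_{K_\infty}(\f)) \bmod \mathfrak{p}_{cc}$, whereas you pass through Fitting ideals, whose compatibility with arbitrary base change is purely formal, and then recover lengths after localizing at the DVR $\ip$. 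Your route is slightly cleaner in that it avoids the external citation; the paper's route keeps everything phrased in terms of characteristic ideals, matching the language of Wan's theorem. Both your caveats are handled correctly: the finite-length concern is harmless (if $len_P = \infty$ the inequality is vacuous), and your verification that the square root in $\Theta_K^+$ produces the exponent $k/2 - 1$ on $\Gamma_K^+$ is exactly the computation the paper carries out via $\widetilde{M}(\varpi_{cc})$.
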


\begin{proof}
     Write $\mcL_{F_\infty}^{S,cc}(\f/K) := \mcL_K^S(\f) \hspace{1mm} mod \hspace{1mm} \mathfrak{p}_{cc}$. First, we show that 
     \begin{align*}
         \mathrm{ord}_P(\mcL_{F_\infty}^{S,cc}(\f/K)) = \mathrm{ord}_{k=2} L_p^S(f_\infty/K,k,k/2,1,1). 
     \end{align*}
      By the definition of $\widetilde{M}$ we have
    \begin{align*}
        \widetilde{M}(\varpi_{cc})(k,s,r,t) = \gamma^{k-2} - \gamma^{2(s-1)} & = \gamma^{2(s-1)} (\gamma^{2(k/2-s)}-1) \\
        & \equiv 0 \hspace{1mm} mod \hspace{1mm} (s-k/2) \hspace{1mm} \mathcal{A}(U \times \zp \times \zp \times \zp).
    \end{align*}
    Similarly, writing $l_{wt} := \log_p(\gamma), l_1 := \log_p(\chi_{acy}(\gamma_1))$ and $l_2 := \log_p(\chi_{acy}(\gamma_2))$, we have
    \begin{align} \label{value of M at varpi wt}
        M(\varpi)(k) & \equiv l_{wt}(k-2) \hspace{1mm} mod \hspace{1mm} (k-2)^2,
    \end{align}
    \begin{align*} 
        \widetilde{M}(\varpi_1)(k,s,r,t) & \equiv l_1(r-1) \hspace{1mm} mod \hspace{1mm} (r-1)^2 \\
        \widetilde{M}(\varpi_2)(k,s,r,t) & \equiv l_2(t-1) \hspace{1mm} mod \hspace{1mm} (t-1)^2. 
    \end{align*}
    In particular, 
    \begin{align} \label{value of M at varpi cc}
         \widetilde{M}(\varpi_{cc})(k,k/2,1,1) & = 0,  
    \end{align}
    \begin{align} \label{value of M at varpi 1}
        \widetilde{M}(\varpi_1)(k,k/2,1,1) & = 0,
    \end{align}
    \begin{align} \label{value of M at varpi 2}
        \widetilde{M}(\varpi_2)(k,k/2,1,1) & = 0.
    \end{align}
    Now assume that for some integer $m \geq 0, \mathrm{ord}_P(\mcL_{F_\infty}^{S,cc}(\f/K)) = m$, i.e., $\mcL_{F_\infty}^{S,cc}(\f/K) \in P^m\I_P - P^{m+1}\I_P$. Since $P\I_P = \varpi \I_P$, then $\mcL_{F_\infty}^{S,cc}(\f/K) \in (\varpi^m)$ and so using (\ref{value of M at varpi wt}),
    \begin{align*}
        \mathrm{ord}_{k=2}M(\mcL_{F_\infty}^{S,cc}(\f/K))(k) = m = \mathrm{ord}_P(\mcL_{F_\infty}^{S,cc}(\f/K)).
    \end{align*}
    Since $\mcL_K^S(\f) - \mcL_{F_\infty}^{S,cc}(\f/K) \in \mathfrak{p}_{cc} = (\varpi_{cc},\varpi_1,\varpi_2)$, we get that
    \begin{align*}
        L_p^S(f_\infty/K,k,k/2,1,1) = M(\mcL_{F_\infty}^{S,cc}(\f/K))(k).
    \end{align*}
    So in this case, we get the equality. Now assume that $\mathrm{ord}_P(\mcL_{F_\infty}^{S,cc}(\f/K)) = \infty$, i.e., $\mcL_K^S(\f) \in \mathfrak{p}_{cc} = (\varpi_{cc},\varpi_1,\varpi_2)$. Then using (\ref{value of M at varpi cc}),(\ref{value of M at varpi 1}) and (\ref{value of M at varpi 2}), we get that 
    \begin{align*}
        L_p^S(f_\infty/K,k,k/2,1,1) = 0.
    \end{align*}
    So in both cases, we get that $\mathrm{ord}_P(\mcL_{F_\infty}^{S,cc}(\f/K)) = \mathrm{ord}_{k=2} L_p^S(f_\infty/K,k,k/2,1,1)$. Now by Proposition \ref{ControlTheorem}, \cite[Corollary 3.8]{SkinnerUrban} and \cite[Theorem 101]{Wan}, we have
    \begin{align*}
        \mathrm{char}(X_{F_\infty}^{S,cc}(\f/K)) = \mathrm{char}(X_{K_\infty}^S(\f)/\mathfrak{p}_{cc}) \subset \mathrm{char}(X_{K_\infty}^S(\f)) \hspace{1mm} mod \hspace{1mm} \mathfrak{p}_{cc} \subset (\mcL_K^S(\f)) \hspace{1mm} mod \hspace{1mm} \mathfrak{p}_{cc}.
    \end{align*}
    In particular,
    \begin{align*}
        \mathrm{ord}_P(\mcL_{F_\infty}^{S,cc}(\f/K)) \leq \mathrm{len}_P(X_{F_\infty}^{S,cc}(\f/K))
    \end{align*}
    which gives us the desired result.
\end{proof}

\section{Vanishing of the Central Critical $p$-adic $L$-Function}
In this section, we discuss the order of vanishing of the central critical $p$-adic $L$-function $L_p^{cc}(f_\infty/K,k)$ using the main result of \cite{MokHeegnerPoints}. \\
Consider the two-variable $p$-adic $L$-function $L_p(f_\infty,\psi,k,k/2)$ defined in \cite[Section 8]{MokExceptionalZero} for $\psi \in \{1,\epsilon_K\}$.
\\
Now, let $\kappa \in U \cap \z_{\geq 2}$ such that $\kappa \equiv 2 \hspace{1mm} mod \hspace{1mm} 2(p-1)$. Let $\phi_\kappa$ be the associated arithmetic point in $\mathfrak{X}_{alg}(\I)$ and let $f_\kappa$ be the associated newform. Write
\begin{align*}
    \phi_\kappa^\dagger : \I[[\Gamma_K^+ \times \Gamma_K^-]] \rightarrow \overline{\qp}
\end{align*}
for the morphism for which
\begin{align*}
    \phi_\kappa^\dagger|_\I = \phi_\kappa, \hspace{5mm} \phi_\kappa^\dagger(\Gamma_K^-) = 1, \hspace{5mm} \phi_\kappa^\dagger(\sigma) = \chi_{cy}(\sigma)^{\kappa/2 - 1}
\end{align*}
for every $\sigma \in \Gamma_K^+$. Note that $\phi_\kappa^\dagger = \widetilde{M}$ on $(k,k/2,1,1)$. Then using the interpolation property given in \cite[Theorem 1.1.2]{BergdallHansen} and (\ref{FactorisationInLZ}), we get that
\begin{align*}
    \phi_\kappa^\dagger(\mathcal{L}^S_K(\f)) & = \chi_{cy}^{\kappa/2 - 1} \circ \phi_\kappa^{cy}(\mathcal{L}^S_K(\f)) \\
    & = \chi_{cy}^{\kappa/2 - 1}(c \cdot L_p^S(f_\kappa) \cdot L_p^S(f_\kappa \otimes \epsilon_K)) \\
    & = \lambda \left(1 - \frac{N\p^{\kappa/2 - 1}}{\alpha(\p,f_\kappa)} \right)^2 \frac{D_F^{\kappa/2 - 1}(\kappa/2 - 1)!^2}{(-2\pi i)^{2(\kappa/2-1)}} \frac{L^S(f_\kappa,\kappa/2)}{\Omega_{f_{\kappa}}^{(-1)^{\kappa/2-1}}} \\
    & \times \left(1 - \frac{N\p^{\kappa/2 - 1}}{\alpha(\p,f_\kappa \otimes \epsilon_K)} \right)^2 \frac{D_F^{\kappa/2 - 1}(\kappa/2 - 1)!^2 Nc_K^{\kappa/2}}{(-2\pi i)^{2(\kappa/2-1)} \tau(\epsilon_K^{-1})} \frac{L^S(f_\kappa \otimes \epsilon_K,\kappa/2)}{\Omega_{f_{\kappa} \otimes \epsilon_K}^{(-1)^{(\kappa/2-1)}sig(\epsilon_K)}}
\end{align*}
where $\lambda \in \overline{\qp}^*$, $D_F$ is the discriminant of the field $F$, $L^S(h,-)$ is the complex $L$-function of the Hilbert modular form $h$ with Euler factors at points in $S$ removed, $\Omega_{h}$ is a choice of period for $h$, $c_K$ is the conductor of the quadratic idele class character $\epsilon_K$ attached to the field $K$, $\tau(\epsilon_K^{-1})$ is the Gauss sum of $\epsilon_K^{-1}$ and $sig(\epsilon_K)$ is the signature of $\epsilon_K$ as defined in \cite[Section 5.1]{MokHeegnerPoints}. \\
Now for every $\kappa \in U \cap \z_{\geq 2}$ such that $\kappa \equiv 2 \hspace{1mm} mod \hspace{1mm} 2(p-1)$, recall we had
\begin{align*}
    L_p^{cc}(f_\infty/K,\kappa) & = \prod_{\mathfrak{l} \in S} E_\mathfrak{l}(f_\kappa, \mathfrak{l}^{-\kappa/2})^{-1} E_\mathfrak{l}(f_\kappa \otimes \epsilon_K, \mathfrak{l}^{-\kappa/2})^{-1} \widetilde{M}(\mathcal{L}^S_K(\f)) \\
    & = \prod_{\mathfrak{l} \in S} E_\mathfrak{l}(f_\kappa, \mathfrak{l}^{-\kappa/2})^{-1} E_\mathfrak{l}(f_\kappa \otimes \epsilon_K, \mathfrak{l}^{-\kappa/2})^{-1} \phi_\kappa^\dagger(\mathcal{L}^S_K(\f)) \\
    & = \prod_{\mathfrak{l} \in S} E_\mathfrak{l}(f_\kappa, \mathfrak{l}^{-\kappa/2})^{-1} E_\mathfrak{l}(f_\kappa \otimes \epsilon_K, \mathfrak{l}^{-\kappa/2})^{-1} \\
    & \lambda \left(1 - \frac{N\p^{\kappa/2 - 1}}{\alpha(\p,f_\kappa)} \right)^2 \frac{D_F^{\kappa/2 - 1}(\kappa/2 - 1)!^2}{(-2\pi i)^{2(\kappa/2-1)}} \frac{L^S(f_\kappa,\kappa/2)}{\Omega_{f_{\kappa}}^{(-1)^{\kappa/2-1}}} \\
    & \times \left(1 - \frac{N\p^{\kappa/2 - 1}}{\alpha(\p,f_\kappa \otimes \epsilon_K)} \right)^2 \frac{D_F^{\kappa/2 - 1}(\kappa/2 - 1)!^2 Nc_K^{\kappa/2}}{(-2\pi i)^{2(\kappa/2-1)} \tau(\epsilon_K^{-1})} \frac{L^S(f_\kappa \otimes \epsilon_K,\kappa/2)}{\Omega_{f_{\kappa} \otimes \epsilon_K}^{(-1)^{(\kappa/2-1)}sig(\epsilon_K)}} \\
    & = \lambda \left(1 - \frac{N\p^{\kappa/2 - 1}}{\alpha(\p,f_\kappa)} \right)^2 \frac{D_F^{\kappa/2 - 1}(\kappa/2 - 1)!^2}{(-2\pi i)^{2(\kappa/2-1)}} \frac{L(f_\kappa,\kappa/2)}{\Omega_{f_{\kappa}}^{(-1)^{\kappa/2-1}}} \\
    & \times \left(1 - \frac{N\p^{\kappa/2 - 1}}{\alpha(\p,f_\kappa \otimes \epsilon_K)} \right)^2 \frac{D_F^{\kappa/2 - 1}(\kappa/2 - 1)!^2 Nc_K^{\kappa/2}}{(-2\pi i)^{2(\kappa/2-1)} \tau(\epsilon_K^{-1})} \frac{L(f_\kappa \otimes \epsilon_K,\kappa/2)}{\Omega_{f_{\kappa} \otimes \epsilon_K}^{(-1)^{(\kappa/2-1)}sig(\epsilon_K)}}.
\end{align*}
So by the interpolation property given in \cite{MokHeegnerPoints} and using the fact that $\{\kappa \in U \cap \z_{\geq 2} | \kappa \equiv 2 \hspace{1mm} mod \hspace{1mm} 2(p-1) \}$ is a dense subset of $U$, we get that for every $k \in U$,
\begin{align} \label{Central Critical in Terms of Mok}
    L_p^{cc}(f_\infty/K,k) = \lambda L_p(f_\infty,1,k,k/2) L_p(f_\infty,\epsilon_K,k,k/2).
\end{align}
The following is \cite[Theorem 5.4]{MokHeegnerPoints}.
\begin{thm}  \label{MokMainResult}
    Let $\psi$ be a quadratic idele class character of $F$, of conductor prime to $\n_E = \mathfrak{np}$. Assume that $\psi(\p) = 1$ and $sgn(\psi)\psi(\mathfrak{n}) = -w(E/F)$, where $sgn(\psi) := \prod_{w|\infty} \psi_w(-1)$ and $w(E/F) \in \{\pm 1\}$ is the global root number of $E/F$, i.e., the sign appearing in the functional equation of Hasse-Weil $L$-function of the elliptic curve $E$ over $F$. Let $K'$ be the field attached to $\psi$. Then
    \begin{enumerate}
        \item The function $L_p(f_\infty,\psi,k,k/2)$ vanishes to order atleast 2 at $k = 2$.
        \item There exists an element $\mathbb{P}_\psi \in (E(K') \otimes \q)_\psi$ and $l \in \q^*$ such that
        \begin{align*}
            \frac{d^2}{dk^2} L_p(f_\infty,\psi,k,k/2) \vline_{k=2} = l \cdot (\log\mathrm{Norm}_{E/F_\p}(\mathbb{P}_\psi))^2,
        \end{align*}
        where $\log\mathrm{Norm}_{E/F_\p}$ is the map defined in \cite[Section 4.5]{MokHeegnerPoints}.
        \item $\mathbb{P}_\psi$ is of infinite order if and only if $L'(E/F,\psi,1) \neq 0$.
    \end{enumerate}
\end{thm}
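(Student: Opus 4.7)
The plan is to combine the interpolation property of $L_p(f_\infty,\psi,k,k/2)$ at arithmetic points with the Bertolini-Darmon-Greenberg-Stevens strategy for exceptional zeros, adapted to the Hilbert modular setting. I would address the three claims in turn.

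For part (1), I would work with the interpolation formula for $L_p(f_\infty,\psi,\kappa,\kappa/2)$ at arithmetic weights $\kappa \equiv 2 \pmod{2(p-1)}$, analogous to the formula already computed in the excerpt for $\mathcal{L}_K^S(\f)$. That formula contains a modified Euler factor of the shape $\bigl(1 - N\p^{\kappa/2-1}/\alpha(\p,f_\kappa\otimes\psi)\bigr)$, and this factor appears \emph{squared} because we evaluate at the central critical line $s=\kappa/2$: the two Euler factors coming from the Hecke eigenvalues $(\alpha,\beta)$ at $\p$ collapse to the same expression via the relation $\alpha\beta=N\p^{\kappa-1}$. Since $E$ has split multiplicative reduction at $\p$ and $\psi(\p)=1$, the unit eigenvalue $\alpha(\p,f\otimes\psi)=1$, so this Euler factor vanishes at $\kappa=2$. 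By Zariski density of the arithmetic locus in $U$, the analytic function $L_p(f_\infty,\psi,k,k/2)$ inherits a zero of order at least $2$ at $k=2$.

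For part (2), the key input is a Heegner-type element attached to $\f$ and $\psi$. The sign condition $sgn(\psi)\psi(\n) = -\mathrm{sign}(E/F)$ together with $\psi(\p)=1$ arranges that the local sign at $\p$ is $+1$ while the global sign is $-1$, so by Saito-Tunnell the automorphic representation of $f$ transfers to a definite quaternion algebra $B/F$ ramified at a suitable set of places. Gross points on the corresponding Shimura set, twisted by $\psi$, yield a distinguished element $\mathbb{P}_\psi \in (E(K)\otimes\mathbb{Q})_\psi$; because $E$ has split multiplicative reduction at $\p$, the Cerednik-Drinfeld rigid uniformization realizes this point $p$-adically via the Tate curve attached to $E/F_\p$. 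Interpolating these Gross points over the Hida family $\f$ produces a theta-element $\Theta_\psi(\f)$, which one would compare with $L_p(f_\infty,\psi,k,k/2)$ along the central critical line. A Greenberg-Stevens style computation, exploiting the Hida-theoretic formula for $\tfrac{d}{dk}\alpha(\p,f_\kappa)|_{k=2}$ in terms of the Tate period of $E$ (the $\mathcal{L}$-invariant), then converts $\tfrac{d^2}{dk^2}$ at $k=2$ into the square of $\log\mathrm{Norm}_{E/F_\p}(\mathbb{P}_\psi)$, up to the rational constant $l$.

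For part (3), I would invoke the Gross-Zagier-Zhang formula for Heegner/Gross points on quaternionic Shimura varieties over totally real fields, which expresses the Neron-Tate height of $\mathbb{P}_\psi$ as a nonzero rational multiple of $L'(f,\psi,1)$. The equivalence follows since $\mathbb{P}_\psi \in E(K)\otimes\mathbb{Q}$ has infinite order if and only if its Neron-Tate height is nonzero. The hard part of this plan is the construction and analysis of the theta-element in part (2) and the resulting second-derivative formula: it demands a careful interplay of quaternionic Hida theory over the real quadratic field $F$, the Cerednik-Drinfeld rigid uniformization at the split multiplicative prime $\p$, and a Hilbert-modular generalization of the Greenberg-Stevens $\mathcal{L}$-invariant argument.
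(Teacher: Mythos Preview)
The paper does not prove this statement at all: it is quoted verbatim as \cite[Theorem 5.4]{MokHeegnerPoints} and used as a black box. Your proposal is therefore not comparable to anything in the paper itself, but it is a faithful outline of the strategy Mok carries out in that reference---the Bertolini--Darmon construction of theta-elements on a definite quaternion algebra (selected via the sign hypothesis), Cerednik--Drinfeld $\p$-adic uniformization at the split multiplicative prime, a Greenberg--Stevens computation of the $\mathcal{L}$-invariant to extract the second derivative, and finally the Gross--Zagier--Zhang height formula for part (3). For the purposes of this paper you may simply cite Mok; if you wish to retain your sketch, it accurately summarizes the structure of his argument.
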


\begin{cor} \label{ineq1}
    Assume that $w(E/F) = -1$ and that the hypotheses in Theorem \ref{WanMainResult} are satisfied. Let $\epsilon_K(\p) = 1$. Then $L_p^{cc}(f_\infty/K,k)$ vanishes to order at least 4 at $k = 2$. Further, $\mathrm{ord}_{k=2} L_p^{cc}(f_\infty/K,k) = 4$ if and only if $\mathrm{ord}_{s=1}L(E/K,s) = 2$.
\end{cor}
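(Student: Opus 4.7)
The plan hinges on the factorisation
\[
L_p^{cc}(f_\infty/K, k) \;=\; \lambda \cdot L_p(f_\infty, 1, k, k/2) \cdot L_p(f_\infty, \epsilon_K, k, k/2)
\]
established immediately above the corollary. This reduces the problem to analysing each of the two factors separately via Theorem \ref{MokMainResult} applied with $\psi = 1$ and $\psi = \epsilon_K$.

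First, I would verify the hypotheses of Theorem \ref{MokMainResult} for both characters. For $\psi = 1$: $\psi(\p) = 1$ is trivial, and $sgn(\psi)\psi(\n) = 1 = -sign(E/F)$ using $sign(E/F) = -1$. For $\psi = \epsilon_K$: $\epsilon_K(\p) = 1$ since $\p$ splits in $K$, while the parity constraint $sgn(\epsilon_K)\epsilon_K(\n) = 1$ is built into the hypotheses of Section \ref{WanMainResult}, specifically into the factorisation $\n = \n^+\n^-$ with $\n^-$ square-free of even length, together with the splitting condition on primes ramifying in $F/\q$.

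Granting this, part (1) of Theorem \ref{MokMainResult} yields $ord_{k=2} L_p(f_\infty, \psi, k, k/2) \geq 2$ for both $\psi$, so the factorisation gives $ord_{k=2} L_p^{cc}(f_\infty/K, k) \geq 4$, the first assertion. For the equivalence, $ord_{k=2} L_p^{cc}(f_\infty/K, k) = 4$ holds iff each of the two factors has order exactly $2$ at $k = 2$, i.e.\ iff $\tfrac{d^2}{dk^2} L_p(f_\infty, \psi, k, k/2)|_{k=2} \neq 0$ for both $\psi$. By part (2) of Theorem \ref{MokMainResult} this second derivative equals $l \cdot (logNorm_{E/F_\p}(\mathbb{P}_\psi))^2$, so its non-vanishing is equivalent to $\mathbb{P}_\psi$ being of infinite order (using that $\log \circ \mathrm{Norm}_{E/F_\p}$ is injective on the non-torsion part of $(E(K) \otimes \q)_\psi$, a consequence of the Tate uniformisation at the split multiplicative prime $\p$). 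By part (3) of the same theorem, this is equivalent to $L'(E/F, \psi, 1) \neq 0$; since the sign of the functional equation forces $L(f, \psi, 1) = 0$ for both characters, this is the same as $ord_{s=1} L(f, \psi, s) = 1$.

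Finally, $L(E/K, s) = L(f, s) L(f, \epsilon_K, s)$ gives
\[
ord_{s=1} L(E/K, s) \;=\; ord_{s=1} L(f, s) + ord_{s=1} L(f, \epsilon_K, s),
\]
and the two sign conditions force each summand to be odd, hence $\geq 1$. Thus $ord_{s=1} L(E/K, s) \geq 2$, with equality precisely when both summands equal $1$, which matches the characterisation of order $4$ on the $p$-adic side obtained above. The main technical obstacle is the bookkeeping of the sign conditions — confirming $sgn(\epsilon_K)\epsilon_K(\n) = 1$ and the sign $-1$ in the functional equation of $L(f, \epsilon_K, s)$ from the axiomatic setup of Section \ref{WanMainResult} — together with the aforementioned injectivity of $\log \circ \mathrm{Norm}_{E/F_\p}$, both ultimately consequences of the split multiplicative reduction at $\p$ and the splitting of $\p$ in $K$.
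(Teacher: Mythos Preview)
Your proposal is correct and follows essentially the same route as the paper: verify the hypotheses of Theorem~\ref{MokMainResult} for $\psi=1$ and $\psi=\epsilon_K$, then read off both claims from the factorisation and parts (1)--(3) of that theorem. Your argument is in fact more explicit than the paper's own proof, which simply says ``apply Theorem~\ref{MokMainResult} to both characters to obtain the result'' without spelling out the equivalence.

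One small correction on the bookkeeping: the condition $sgn(\epsilon_K)=1$ does not come from assumption~(5) of Section~\ref{WanMainResult} (the splitting in $K$ of primes ramified in $F/\q$), which is irrelevant here. It holds because $K$ is a \emph{totally imaginary} quadratic extension of the real quadratic field $F$, so $\epsilon_{K,w}(-1)=-1$ at each of the two real places of $F$ and $sgn(\epsilon_K)=(-1)^2=1$; combined with $\epsilon_K(\n)=\epsilon_K(\n^-)=1$ (even number of inert primes) this gives $sgn(\epsilon_K)\epsilon_K(\n)=1=-sign(E/F)$, exactly as in the paper's proof.
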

\begin{proof}
    First, let $\psi$ be the trivial idele class character. Since $w(E/F) = -1$, clearly the hypothesis of Theorem \ref{MokMainResult} are satisfied by $\psi$. Now let $\psi = \epsilon_K$. Then by our hypothesis $\epsilon_K(\p) = 1$ and since $F$ only has real embeddings, $sgn(\epsilon_K) = 1$. So again by our hypothesis we get $\psi(\mathfrak{n}) = \psi(\mathfrak{n}^+ \mathfrak{n}^-) = \psi(\mathfrak{n}^-) = 1$. Now note that $L(E/K,s) = L(E/F,s)L(E/F,\epsilon_K,s)$ and apply Theorem \ref{MokMainResult} to both the trivial character and the quadratic character $\epsilon_K$ to obtain the result.
\end{proof}

\section{Nekov\'ar's Selmer Complexes and the Extended Mordell-Weil Group} \label{sec 6}
In this section, we recall Selmer complexes as defined in \cite{Nekovar}. Further, we introduce the extended Mordell-Weil group, and relate it with the first cohomology group of the Selmer complex (see (\ref{defn of iedagger})).
\\
Let $\chi \in \{1,\epsilon_K\}$ be a quadratic character of $F$ of conductor coprime with $\mathfrak{np}$. If $\chi$ is trivial, set $K'=F$, otherwise let $K' = K$. Also, set $S' \in \{S,S_K\}$ accordingly. For all $w \in S'$, fix an embedding $i_w : \overline{K'} \hookrightarrow \overline{K'_w}$. This induces a morphism $i_w^* : G_w \hookrightarrow G_{K'}$. 

\subsection{Kummer Theory} \label{kummer}
We begin by recalling Kummer theory.
\\
Recall that $T_p(E) := \varprojlim_n E(\overline{\q})[p^n]$ is the Tate module of $E/F$. As $S'$ contains all the bad primes, $T_p(E)$ is a continuous $G_{K',S'}$-module. For every $w \in S'$, $i_w$ induces an isomorphism of $G_w$-modules, (also denoted $i_w$), $i_w : (E(\overline{\q})[p^n])_w \xrightarrow{\sim} E(\overline{K'_w})[p^n]$. (Here, for any $\z[G_{K',S'}]$-module $M$, we denote by $M_w$ the $\z[G_w]$-module $M$ with the $G_w$-action induced by $i_w^*$). Taking the inverse limit over $n$ and base changing to $\qp$ gives an isomorphism of $\qp[G_w]$-modules
\begin{align*}
    i_w : V_p(E)_w \xrightarrow{\sim} V_w(E) 
\end{align*}
where $V_w(E) := \varprojlim E(\overline{K'_w})[p^n] \otimes \qp$. Consider the global Kummer map
\begin{align*}
   \kappa : E(K') \otimes \qp = \varprojlim E(K')/p^nE(K') \otimes \qp \xrightarrow{\kappa_n} \varprojlim_n H^1(G_{K',S'},E[p^n]) \otimes \qp \xrightarrow{\sim} H^1(G_{K',S'},V_p(E))
\end{align*}
where $\kappa_n$ is the usual Kummer map. Denote by $\gamma_x$ the image of $x \otimes 1 \in E(K') \otimes \qp$ under this map. Similarly define the local Kummer map for every $w \in S'$
\begin{align*}
    \kappa_w : E(K'_w) \otimes \qp = \varprojlim E(K'_w)/p^nE(K'_w) \otimes \qp \xrightarrow{\kappa_{w,n}} \varprojlim_n H^1(K'_w,E[p^n]) \otimes \qp \xrightarrow{\sim} H^1(K'_w,V_w(E))
\end{align*}
and the element $\gamma_{x_w} \in H^1(K'_w,V_w(E))$ for every $x_w \in E(K'_w)$. The isomorphism $i_w$ induces a map in cohomology $i_w^{-1} : H^1(K'_w,V_w(E)) \rightarrow H^1(K'_w,V_p(E))$. We also have the restriction map $res_w : H^1(G_{K',S'},V_p(E)) \rightarrow H^1(K'_w,V_p(E))$. Then for every $w \in S'$, the following diagram commutes
$$ 
\begin{tikzcd} 
	E(K') \otimes \qp \arrow[r,"\kappa"] \arrow[d,"i_w \otimes id"] &
	H^1(G_{K',S'},V_p(E)) \arrow[d,"res_w"] 
	\\
	E(K'_w) \otimes \qp \arrow[r,"i_w^{-1} \circ \kappa_w"] &
	H^1(K'_w,V_p(E))
\end{tikzcd}
$$
so that we have, 
\begin{align} \label{comm diag kummer theory}
    res_w(\gamma_x) = i_w^{-1}(\gamma_{i_w(x)}). 
\end{align}
\\
The local conditions $E(K'_w) \otimes \qp$ define the Selmer group $Sel_E(K') \subset H^1(G_{K',S'},V_p(E))$ which fits in the following exact sequence
\begin{align} \label{ses for MW group}
    0 \rightarrow E(K') \otimes L \rightarrow Sel_E(K') \otimes L \rightarrow T_p(\Varpi(E/K')) \otimes L \rightarrow 0,
\end{align}
where $T_p(\Varpi(E/K')) := \varprojlim_{n \geq 1} \Varpi(E/K')[p^n]$ and $L$ is a finite extension of $\qp$ as in Section \ref{BigGaloisRepresentation}.
\\
We give another description of the Selmer group.
\\
Since $E/F_\mathfrak{p}$ has split multiplicative reduction, Tate's parametrization gives us a group isomorphism,
\begin{align*}
    \Phi_{Tate} : \overline{\qp}^*/q_E^\z \xrightarrow{\sim} E(\overline{\qp})
 \end{align*}
where $q_E \in F_\mathfrak{p}$ is the Tate period and $\mathrm{ord}_p(q_E)>0$. Restricting to $p^n$-torsion points for every $n \in \z_{>0}$, this isomorphism induces a short exact sequence
\begin{align*} 
    0 \rightarrow \mu_{p^n} \xrightarrow{\Phi_{Tate}} E(\overline{\qp})_{p^n} \xrightarrow{P_{Tate}} \z/p^n\z \rightarrow 0.
\end{align*}
Taking the inverse limit over $n$ and base changing to $L$ (a finite extension of $\qp$ as in Section \ref{BigGaloisRepresentation}) we get
\begin{align} \label{ses for Vf}
    0 \rightarrow L(1) \xrightarrow{p_w^+} V_f \xrightarrow{p_w^-} L \rightarrow 0.
\end{align}
Here $p_w^+$ is induced by $i_w^{-1} \circ \Phi_{Tate}$ and $p_w^-$ is induced by $P_{Tate} \circ i_w$. Moreover, if $\Phi_{Tate}(\widetilde{P}) = P \in E(K'_w)$ for some $\widetilde{P} \in (K'_w)^*$ then we have that $(\Phi_{Tate})_*(\gamma_{\widetilde{P}}) = \gamma_P$, where $(\Phi_{Tate})_*$ is the map induced by $\Phi_{Tate} : L(1) \rightarrow V_f$ in cohomology. For every $w|\mathfrak{p}$ in $S'$, define $H^1_f(K'_w,V_f) \subset H^1(K'_w,V_f)$ as the image of $H^1(K'_w,L(1))$ under the map induced in cohomology by $p_w^+$ and for every $w \nmid \mathfrak{p}$ in $S'$, define $H^1_f(K'_w,V_f)=0$. These local conditions define a Selmer group $H^1_f(K',V_f) \subset H^1(K',V_f)$. Then as in \cite[Section 2]{greenberg1998iwasawatheoryellipticcurves}, we get
\begin{lem}
    $Sel_E(K') \otimes_{\qp} L = H^1_f(K',V_f)$.
\end{lem}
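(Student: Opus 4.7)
The plan is to compare the two subgroups of $H^1(G_{K',S'},V_f)$ locally. Using $V_f = V_p(E)\otimes_{\qp} L$ from Section \ref{BigGaloisRepresentation}, we have $H^1(G_{K',S'},V_f) \cong H^1(G_{K',S'}, V_p(E)) \otimes_\qp L$, so both $Sel_E(K') \otimes_\qp L$ and $H^1_f(K', V_f)$ sit inside this common ambient space cut out by local conditions at the primes $w \in S'$; it therefore suffices to match the local conditions place by place.

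At any $w \in S'$ with $w \nmid \p$, the group $E(K'_w)$ is a finitely generated $\z_\ell$-module with $\ell \neq p$ the residue characteristic, so $E(K'_w) \otimes_\z \qp = 0$ and the Kummer condition is trivial, matching $H^1_f(K'_w, V_f) = 0$ by definition. At $w \mid \p$, Tate's parametrization gives a surjection $\Phi_{Tate} \colon (K'_w)^* \twoheadrightarrow E(K'_w)$ with kernel $q_E^\z$; lifting any $P \in E(K'_w)$ to some $\widetilde{P} \in (K'_w)^*$ and combining the identity $(\Phi_{Tate})_*(\gamma_{\widetilde{P}}) = \gamma_P$ with (\ref{comm diag kummer theory}), the class $\mathrm{res}_w(\gamma_P) = i_w^{-1}(\gamma_{i_w(P)})$ manifestly lies in the image of $(p_w^+)_*$, since $p_w^+$ is induced by $i_w^{-1}\circ \Phi_{Tate}$ on $p$-power torsion. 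This gives the inclusion $Sel_E(K')\otimes_\qp L \subseteq H^1_f(K', V_f)$.

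For the reverse inclusion I match dimensions of the two local subspaces at $w \mid \p$. The split multiplicative reduction of $E$ at $\p$ makes $E(K'_w)$ a $p$-adic Lie group of dimension $[K'_w:\qp]$; since $E(K'_w)[p^\infty]$ is finite, the Kummer map is injective on $E(K'_w) \otimes_\z L$, whose $L$-dimension is $[K'_w:\qp]$. On the other side, the long exact sequence attached to (\ref{ses for Vf}), together with $L(1)^{G_{K'_w}}=0$ and the non-splitting of (\ref{ses for Vf}) over $G_{K'_w}$---its extension class in $H^1(K'_w, L(1))$ is represented by the Tate period $q_E$, which has positive $p$-adic valuation---forces $V_f^{G_{K'_w}} = 0$, so $\ker((p_w^+)_*) = H^0(K'_w, L) = L$. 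Combined with $\dim_L H^1(K'_w, L(1)) = [K'_w:\qp] + 1$ from the local Euler characteristic formula and Tate local duality, we obtain $\dim_L \mathrm{Im}((p_w^+)_*) = [K'_w:\qp]$. Since both local subspaces of $H^1(K'_w,V_f)$ have the same $L$-dimension and one contains the other, they coincide.

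The bulk of the work lies in the bookkeeping at $w \mid \p$: extracting the non-splitting of (\ref{ses for Vf}) from the positive valuation of the Tate period $q_E$, pinning down $\ker((p_w^+)_*)$, and assembling the local Euler characteristic/Tate duality computation. Once this is in place, the rest is formal, given the Tate parametrization compatibility $(\Phi_{Tate})_*(\gamma_{\widetilde{P}}) = \gamma_P$ already recorded above.
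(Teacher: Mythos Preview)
Your argument is correct. The paper does not give its own proof here but simply cites \cite[Section 2]{greenberg1998iwasawatheoryellipticcurves}; what you have written is precisely a spelled-out version of that argument, matching the local conditions place by place via the Tate parametrization and a dimension count at $w\mid\p$. One minor notational point: when you write ``$\ker((p_w^+)_*) = H^0(K'_w, L) = L$'', you of course mean that the connecting homomorphism $\partial_w:H^0(K'_w,L)\to H^1(K'_w,L(1))$ is injective (because $V_f^{G_{K'_w}}=0$) and hence identifies $H^0(K'_w,L)$ with $\ker((p_w^+)_*)$; it would be cleaner to say this explicitly, but the mathematics is fine.
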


\subsection{Selmer Complexes}
Now we give some preliminaries on Selmer complexes following \cite{Nekovar}.
\\
Write
\begin{align*}
    \mathds{A}_\f & := \mathrm{hom}_{cont}(\T_\f,\mu_{p^\infty}), \hspace{5mm} \mathds{A}_\f^\pm := \mathrm{hom}_{cont}(\T_\f^\mp,\mu_{p^\infty})
\end{align*}
and $\T_{\f,P}$ (resp., $\T_{\f,P}^\pm$) for the localisation of $\T_\f$ (resp., $\T_\f^\pm$) at the prime $P$.
Recall from Section \ref{BigGaloisRepresentation} we have the following short exact sequences
\begin{align*}
    0 \rightarrow V_f^+ \rightarrow V_f \rightarrow V_f^- \rightarrow 0, \\
    0 \rightarrow \T_\f^+ \rightarrow \T_\f \rightarrow \T_\f^- \rightarrow 0,
\end{align*}
where $V_f^+ \cong L(1)$ and $V_f^- \cong L$. The following short exact sequences are induced by the exact sequence above:
\begin{align*}
    0 \rightarrow \T_{\f,P}^+ \rightarrow \T_{\f,P} \rightarrow \T_{\f,P}^- \rightarrow 0, \\
    0 \rightarrow \mathds{A}_\f^+ \rightarrow \mathds{A}_\f \rightarrow \mathds{A}_\f^- \rightarrow 0.
\end{align*}
Let $w \in S'$ be a prime of $K'$ lying above a prime $v$ of $F$. For $X \in \{V_f,\T_\f,\T_{\f,P}, \mathds{A}_\f \}$, put $X_v^\pm := X^\pm$ and then set the local conditions as 
\begin{align*}
    U_w^+(X) := \begin{cases}
        C^\bullet_{cont}(K'_w,X_v^+) & v|p \\
        0 & v \nmid p
    \end{cases}
\end{align*}
and set $i_w^+ : U_w^+(X) \rightarrow C^\bullet_{cont}(K'_w,X)$ for the map induced by the inclusion $X_v^+ \hookrightarrow X$. Following \cite[Section 6.1]{Nekovar} we define the Selmer complex attached to $X$ as
\begin{align*}
    \widetilde{C}^\bullet_f(G_{K',S'},X) = \text{Cone} \left(C^\bullet_{cont}(G_{K',S'},X) \oplus \bigoplus_{w \in S'} U_w^+(X) \xrightarrow{res_{S'} - i_{S'}^+} \bigoplus_{w \in S'} C^\bullet_{cont}(K'_w,X)\right)[-1],
\end{align*}
where $res_{S'} := \oplus_{w \in S'} res_w$ and $i_{S'}^+ := \oplus_{w \in S'} i_w^+$. Write $\widetilde{\textbf{R}\Gamma}_f(G_{K',S'},X)$ for the image of $\widetilde{C}^\bullet_f(G_{K',S'},X)$ in the derived category of $R$-modules where $R = \I$ if $X \in \{\T_\f, \mathbb{A}_\f\}$, $R = \mathcal{O}_{L,P}$ if $X = V_f$ and $R = \ip$ if $X = \tfp$. Also write $\widetilde{H}^*_f(G_{K',S'},X)$ for the cohomology groups $H^*(\widetilde{C}^\bullet_f(G_{K',S'},X))$. Recall that $S'$ contains all the bad primes.

\begin{prop} \label{SesForSelmerComplex}
    \begin{enumerate}
        \item Upto a canonical isomorphism, the Selmer complex $\widetilde{\textbf{R}\Gamma}_f(G_{K',S'},X)$ does not depend on the choice of $S'$ and hence we denote it by $\widetilde{\textbf{R}\Gamma}_f(K',X)$ and cohomology groups by $\widetilde{H}^i_f(K',X)$.

        \item There is an exact triangle
        \begin{align*}
             \widetilde{\textbf{R}\Gamma_f}(K',\T_{\f,P}) \xrightarrow{\varpi} \widetilde{\textbf{R}\Gamma_f}(K',\T_{\f,P}) \rightarrow\widetilde{\textbf{R}\Gamma_f}(K',V_f)
        \end{align*}
        inducing a short exact sequence
        \begin{align*}
            0 \rightarrow \widetilde{H}^q_f(K',\T_{\f,P})/(\varpi) \rightarrow \widetilde{H}^q_f(K',V_f) \xrightarrow{i_P} \widetilde{H}^{q+1}_f(K',\T_{\f,P})[\varpi] \rightarrow 0.
        \end{align*}

        \item  $\widetilde{H}^1_f(K',\T_{\f,P})$ is a free $\ip$-module.

        \item There is an exact sequence 
        \begin{align}
            0 \rightarrow \bigoplus_{w|\p} L \rightarrow \widetilde{H}^1_f(K',V_f) \rightarrow H^1_f(K',V_f) \rightarrow 0.
        \end{align}
    \end{enumerate}
\end{prop}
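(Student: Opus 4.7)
The plan is to derive all four parts from the general theory of Nekov\'ar's Selmer complexes, exploiting the fact that at primes of $K'$ above $\p$ the split multiplicative reduction of $E$ furnishes the filtration $V_f^+ \cong L(1)$ and $V_f^- \cong L$ (with trivial Galois action on $V_f^-$).

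For (1), I would invoke the standard fact (cf.\ \cite[Lemma 7.8.8]{Nekovar}) that enlarging $S'$ by a prime $w \nmid p$ with the zero local condition yields a quasi-isomorphism of Selmer complexes: the added factor $C^\bullet_{\mathrm{cont}}(K'_w,X)$ appearing in the target of the cone is cancelled against the change $C^\bullet_{\mathrm{cont}}(G_{K',S'},X) \to C^\bullet_{\mathrm{cont}}(G_{K',S'\cup\{w\}},X)$ via the Greenberg--Poitou--Tate exact sequence, since $V_f$ (resp.\ $\T_{\f,P}$, $\T_\f$) is unramified outside the bad primes and $p$. For (2), the exact sequence $0 \to \T_{\f,P} \xrightarrow{\varpi} \T_{\f,P} \to V_f \to 0$, valid because $\T_{\f,P}/\varpi \cong \T_\f \otimes_\I \I_P/P\I_P \cong V_f$, is a sequence of flat $\ip$-modules, so it induces a distinguished triangle of Selmer complexes; the associated long exact sequence in cohomology then breaks into the claimed short exact sequence in each degree.

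For (3), specialising (2) at $q = 0$ gives
$$0 \to \widetilde{H}^0_f(K',\T_{\f,P})/\varpi \to \widetilde{H}^0_f(K',V_f) \to \widetilde{H}^1_f(K',\T_{\f,P})[\varpi] \to 0,$$
so it suffices to show $\widetilde{H}^0_f(K',V_f) = 0$. The mapping-cone long exact sequence embeds $\widetilde{H}^0_f(K',V_f)$ into $H^0(G_{K',S'},V_f) \oplus \bigoplus_{w\mid p} H^0(K'_w,V_f^+)$; the first summand vanishes by \textbf{(irred)}, and the second because $V_f^+ \cong L(1)$ has non-trivial cyclotomic action on $G_{K'_w}$. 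Thus $\widetilde{H}^1_f(K',\T_{\f,P})$ is $\varpi$-torsion free, and being finitely generated over the DVR $\ip$, is free.

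For (4), I would run the Cone long exact sequence at $q = 1$ to obtain
$$0 \to \mathrm{coker}(H^0(A) \to H^0(B)) \to \widetilde{H}^1_f(K',V_f) \to \ker(H^1(A) \to H^1(B)) \to 0,$$
with $A = C^\bullet_{\mathrm{cont}}(G_{K',S'},V_f) \oplus \bigoplus_{w\mid p}C^\bullet_{\mathrm{cont}}(K'_w,V_f^+)$ and $B = \bigoplus_{w \in S'}C^\bullet_{\mathrm{cont}}(K'_w,V_f)$, noting $H^0(A) = 0$ from the proof of (3). The right-hand kernel in turn fits in
$$0 \to \bigoplus_{w\mid p} \ker\!\bigl(H^1(K'_w,V_f^+) \to H^1(K'_w,V_f)\bigr) \to \ker \to H^1_f(K',V_f) \to 0$$
by the definition of $H^1_f(K',V_f)$ from Section~\ref{kummer} and by lifting classes along $V_f^+ \hookrightarrow V_f$; the connecting map $\delta: L = H^0(K'_w,V_f^-) \to H^1(K'_w,V_f^+)$ is injective because its image is generated by the Tate period $q_E$, which has positive $w$-adic valuation. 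Hence each $\ker\!\bigl(H^1(K'_w,V_f^+) \to H^1(K'_w,V_f)\bigr) \cong L$. It remains to verify $H^0(B) = \bigoplus_{w\in S'} H^0(K'_w,V_f) = 0$: at $w \mid p$ this is exactly the non-splitness of the Tate extension just noted, while at finite primes $w \in S'$ not above $p$ (bad primes of $E$), the vanishing follows from \textbf{(irred)} together with the explicit local structure of $V_f$ at $w$. The main obstacle is this last vanishing and a clean accounting of the cokernel/kernel in the Cone long exact sequence of (4); parts (1)--(3) are essentially formal consequences of the filtration on $V_f$ and (irred).
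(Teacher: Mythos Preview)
The paper's own proof is a bare citation to \cite[Propositions 12.7.13.3 and 12.7.13.4]{Nekovar}, so you have supplied considerably more detail than the authors do; your outlines for (2) and (3) are correct and are precisely what one finds upon unpacking those propositions in Nekov\'ar.

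Two small corrections to your justifications. For (1), the mechanism is not really Poitou--Tate but local acyclicity: at a good prime $w\nmid p$ the Frobenius eigenvalues on $V_p(E)$ are Weil numbers of absolute value $\sqrt{N(w)}\neq 1$, so $H^0(K'_w,V_f)=0$, whence $H^2=0$ by local duality and self-duality of $V_f$, and $H^1=0$ by the local Euler characteristic formula. The same argument works for $\tfp$ after observing that the Frobenius eigenvalues reduce mod $\varpi$ to those of $V_f$, hence $\mathrm{Frob}_w-1$ is a unit in $\ip$. Thus adding $w$ to $S'$ with the zero local condition changes nothing up to quasi-isomorphism. (In fact Nekov\'ar shows more: the local complex $\textbf{R}\Gamma_{cont}(K'_w,\tfp)$ is acyclic even at \emph{bad} primes $w\nmid p$; the paper uses this later in the proof of Lemma~\ref{lem7.5}.)

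For (4), the vanishing $H^0(K'_w,V_f)=0$ at primes $w\nmid p$ in $S'$ that you flag as the main obstacle is elementary and does not require \textbf{(irred)}: since the residue characteristic $\ell$ of $w$ is prime to $p$, the group $E(K'_w)[p^\infty]$ is finite (the formal group is pro-$\ell$), hence $T_p(E)^{G_{K'_w}}=0$ and so $V_f^{G_{K'_w}}=0$. With this in hand your cone long-exact-sequence derivation of the exact sequence in (4) is correct; the copy of $L$ you produce as $\ker\bigl(H^1(K'_w,V_f^+)\to H^1(K'_w,V_f)\bigr)$ is exactly $H^0(K'_w,V_f^-)\cong L$ via the connecting map, matching the description in \cite[12.7.13.4]{Nekovar} and the identification $Q_w=i_E^\dagger(q_w)$ used later in the paper.
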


\begin{proof}
    See propositions 12.7.13.3 and 12.7.13.4 of \cite{Nekovar}.
\end{proof}

\subsubsection{Global Cup-Product Pairing}
Recall we had a perfect bilinear pairing 
\begin{align*}
    \pi : \T_\f \otimes \T_\f \rightarrow \I(1)
\end{align*}
in Section \ref{BigGaloisRepresentation} which gives rise to the pairing
\begin{align*}
    \pi_P : \tfp \otimes \tfp \rightarrow \ip(1).
\end{align*} 
\cite[Section 6.3]{Nekovar} attached to $\pi_P$ the following global cup-product pairing :
\begin{align*}
    \cup_{Nek} : \widetilde{H}^q_f(K',\T_{\f,P}) \otimes_{\ip} \widetilde{H}^{3-q}_f(K',\T_{\f,P}) \rightarrow \ip
\end{align*}
which gives rise to the following adjunction isomorphisms (see \cite[Proposition 6.7.7]{Nekovar})
\begin{align} \label{H^1-H^2 relation}
    adj(\cup_{Nek}) : \widetilde{H}^q_f(K',\T_{\f,P}) \otimes_{\ip} Frac(\ip) \cong \mathrm{hom}_{Frac(\ip)}(\widetilde{H}^{3-q}_f(K',\T_{\f,P}) \otimes_{\ip} Frac(\ip), Frac(\ip)).
\end{align}

\subsubsection{Generalised Cassels-Tate Pairing} \label{CT Pairing}
Following \cite[Section 10.2]{Nekovar} we define generalised Cassels-Tate cup-product pairing. Let 
\begin{align*}
    \mathfrak{C} := Cone \left(\tau_{\geq 2}C^\bullet_{cont}(G_{K',S'},\I(1)) \xrightarrow{res_{S'}} \oplus_{w \in S'} \tau_{\geq 2} C^\bullet_{cont}(K'_w,\I(1))\right)[-1]. 
\end{align*}
For $G \in \{G_{K',S'},G_w\}$, this induces the usual cup product pairing
\begin{align*}
    \cup_\pi : C^\bullet_{cont}(G,\tfp) \otimes_{\ip} C^\bullet_{cont}(G,\tfp) \rightarrow C^\bullet_{cont}(G,\ip(1)).
\end{align*}
By \cite[Proposition 1.3.2]{Nekovar}, for any $r \in \I$ we have a morphism
\begin{align*}
    \cup_{\pi,r} : \widetilde{C}^\bullet_f(G_{K',S'},\tfp) \otimes_{\ip} \widetilde{C}^\bullet_f(G_{K',S'},\tfp) \rightarrow \mathfrak{C}
\end{align*}
defined as
\begin{align*}
    (x_n,x_n^+,x_{n-1}) \cup_{\pi,r} (y_m,y_m^+,y_{m-1}) := (x_n \cup_\pi y_m, \left( x_{n-1} \cup_\pi(r \cdot res_{S'}(y_m) + (1-r) \cdot i_{S'}^+(y_m^+)) \right) + \\
    \left( (-1)^n((1-r) \cdot res_{S'}(x_n) + r \cdot i_{S'}^+(x_n^+)) \cup_\pi y_{m-1}) \right).
\end{align*}
This morphism is independent of $r$ upto homotopy, so we omit $r$ in the notation and write $\cup_\pi$ for $\cup_{\pi,r}$. Let $\mathscr{I} := Frac(\ip)$ and $\mathcal{C} := [\ip \xrightarrow{-i} \mathscr{I}]$ be the complex that is concentrated in degrees 0 and 1. By definition
\begin{align*}
    \mathcal{C} \otimes_{\ip} \mathcal{C} = [\ip \xrightarrow{(-i,-i)} \mathscr{I} \oplus \mathscr{I} \xrightarrow{(-id,id)} \mathscr{I}]
\end{align*}
is the complex concentrated in degrees 0, 1 and 2. It is easy to see that the morphism
\begin{align*}
    v_{\ip} : \mathcal{C} \otimes \mathcal{C} \rightarrow \mathcal{C}
\end{align*}
defined by the identity in degree 0 and by projection on the first component in degree 1 is a quasi-isomorphism. In order to ease the notation, write $\widetilde{C}^\bullet_f(\tfp)$ for $\widetilde{C}^\bullet_f(G_{K',S'},\tfp)$. Define a morphism
\begin{align*}
    \left(\widetilde{C}^\bullet_f(\tfp) \otimes_{\ip} \mathcal{C} \right) \otimes_{\ip} \left(\widetilde{C}^\bullet_f(\tfp) \otimes_{\ip} \mathcal{C} \right) \xrightarrow{s_{23}} \left(\widetilde{C}^\bullet_f(\tfp) \otimes_{\ip} \widetilde{C}^\bullet_f(\tfp) \right) \otimes_{\ip} \left(\mathcal{C} \otimes_{\ip} \mathcal{C} \right) \\ \xrightarrow{id \otimes v_{\ip}}\left(\widetilde{C}^\bullet_f(\tfp) \otimes_{\ip} \widetilde{C}^\bullet_f(\tfp) \right) \otimes_{\ip} \mathcal{C} \xrightarrow{\cup_\pi \otimes id} \mathfrak{C} \otimes_{\ip} \mathcal{C}
\end{align*}
where $s_{23}((a \otimes b) \otimes (c \otimes d)) := (-1)^{deg(b)deg(c)}((a \otimes c) \otimes (b \otimes d))$ and $\cup_\pi := \cup_{\pi,r}$ for some $r \in \I$. This induces in cohomology a morphism which is independent of the choice of $r$,
\begin{align*}
    H^2(\widetilde{C}^\bullet_f(\tfp) \otimes_{\ip} \mathcal{C}) \otimes_{\ip} H^2(\widetilde{C}^\bullet_f(\tfp) \otimes_{\ip} \mathcal{C}) \xrightarrow{\cup_{\pi,2,2}} H^4(\mathfrak{C} \otimes_{\ip} \mathcal{C}). 
\end{align*}
$\cup_{\pi,2,2}$ factorises through the projection $H^2(\widetilde{C}^\bullet_f(\tfp) \otimes_{\ip} \mathcal{C}) \rightarrow H^2(\widetilde{C}^\bullet_f(\tfp))_{\ip-tor}$. The exact triangle
\begin{align*}
    \mathfrak{C} \rightarrow \mathfrak{C} \otimes_{\ip} \mathscr{I} \rightarrow (\mathfrak{C} \otimes_{\ip} \mathcal{C})[1]
\end{align*}
induces a short exact sequence
    \begin{align} \label{ses}
        0 \rightarrow H^{q-1}(\mathfrak{C}) \otimes_{\I_P} \mathscr{I}/\I_P \rightarrow H^q(\mathfrak{C} \otimes_{\I_P} \mathcal{C}) \rightarrow H^q(\mathfrak{C})_{\I_P-tor} \rightarrow 0.
    \end{align}
Moreover, we have that $H^4(\mathfrak{C}) = 0$ (see \cite[Section 5.4.1]{Nekovar}). So, in particular for $q = 4$, we obtain the following isomorphims
\begin{align*}
    H^4(\mathfrak{C} \otimes_{\ip} \mathcal{C}) \xrightarrow{\sim} H^3(\mathfrak{C}) \otimes_{\ip} \mathscr{I}/\ip \xrightarrow{\sim} \mathscr{I}/\ip.
\end{align*}
So we get an $\ip$-bilinear form
\begin{align} \label{Cassels Tate Pairing}
    \cup_{CT} : \widetilde{H}^2_f(K',\tfp)_{\ip-tor} \times \widetilde{H}^2_f(K',\tfp)_{\ip-tor} \rightarrow \mathscr{I}/\ip.
\end{align}

\begin{thm} \label{cup is alternating}
    $\cup_{CT}$ is non-degenerate and alternating.
\end{thm}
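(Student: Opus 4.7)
The plan is to follow the general framework developed in \cite[Chapter 10]{Nekovar}, which produces a non-degenerate alternating Cassels-Tate pairing from any skew-symmetric self-dual pairing on the coefficient module of a sufficiently well-behaved Selmer complex. Both hypotheses hold in our setup: the pairing $\pi : \T_\f \otimes \T_\f \to \I(1)$ is skew-symmetric and induces the self-duality $\T_\f \cong \T_\f^*(1)$ compatibly with the filtration $\T_\f^+ \subset \T_\f$ as shown in Section \ref{BigGaloisRepresentation}; and $\ip$ is a discrete valuation ring with $\widetilde{H}^1_f(K',\tfp)$ free over $\ip$ by Proposition \ref{SesForSelmerComplex}(3). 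The two properties (alternation and non-degeneracy) are then proved independently.

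For the alternating property, the essential input is the graded-commutativity of $\cup_\pi$ at the chain level combined with $\pi^T = -\pi$. Concretely, for $x,y \in \widetilde{H}^2_f(K',\tfp)_{\ip-tor}$ one computes
\[
    y \cup_{CT} x \;=\; (-1)^{2\cdot 2+1}\,(x \cup_{CT} y) \;=\; -(x \cup_{CT} y),
\]
where the factor $(-1)^{2\cdot 2}$ records the swap of two degree-$2$ classes through $s_{23}$ and the extra sign $-1$ comes from the skew-symmetry of $\pi$. The independence of $\cup_{\pi,r}$ from the homotopy parameter $r$ ensures that this identity descends to cohomology. Skew-symmetry then upgrades to alternation: since $\mathscr{I}/\ip$ is a $\zp$-module with $p>5$, the relation $2\cdot(x \cup_{CT} x) = 0$ forces $x \cup_{CT} x = 0$.

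For non-degeneracy, the strategy is to compare $\cup_{CT}$ with the perfect pairing $\cup_{Nek}$ after extending scalars to $\mathscr{I}$. The short exact sequence of complexes $0 \to (\mathscr{I}/\ip)[-1] \to \mathcal{C} \to \ip \to 0$ induces in cohomology an identification of $\widetilde{H}^2_f(K',\tfp)_{\ip-tor}$ with a suitable subquotient of $H^2(\widetilde{C}^\bullet_f(\tfp)\otimes_{\ip}\mathcal{C})$ through which $\cup_{\pi,2,2}$ factors. The adjunction isomorphism (\ref{H^1-H^2 relation}) from $\cup_{Nek}$, combined with the freeness of $\widetilde{H}^1_f(K',\tfp)$ and the universal coefficient sequence attached to $0 \to \ip \to \mathscr{I} \to \mathscr{I}/\ip \to 0$, identifies $\widetilde{H}^2_f(K',\tfp)_{\ip-tor}$ with $\mathrm{Hom}_{\ip}\bigl(\widetilde{H}^2_f(K',\tfp)_{\ip-tor},\, \mathscr{I}/\ip\bigr)$, and tracing through the construction shows that this tautological identification coincides with the map induced by $\cup_{CT}$. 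The main obstacle I anticipate is the chain-level bookkeeping needed to match the connecting map produced by the complex $\mathcal{C}$ with the abstract duality pairing: one must carefully verify that the twist $s_{23}$ and the quasi-isomorphism $v_{\ip}$ in the definition of $\cup_{\pi,2,2}$ produce exactly this connecting map. Once this comparison is in place, non-degeneracy follows formally from the perfectness of $\cup_{Nek}$ over $\mathscr{I}$.
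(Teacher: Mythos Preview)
Your proposal is correct and follows the same route as the paper: both defer to Nekov\'a\v{r}'s general theory of Cassels--Tate pairings on Selmer complexes, with the paper simply citing \cite[Proposition 12.7.13.4]{Nekovar} while you sketch the underlying mechanism (skew-symmetry of $\pi$ for alternation, comparison with the perfect pairing $\cup_{Nek}$ over $\mathscr{I}$ for non-degeneracy). Your sign bookkeeping is a bit informal---the full chain-level verification in Nekov\'a\v{r}'s Chapter 10 is more delicate than the single $(-1)^{2\cdot 2+1}$ you write---but the strategy is exactly the one Nekov\'a\v{r} carries out, and your reduction from skew-symmetry to alternation via invertibility of $2$ in $\mathscr{I}/\ip$ is valid.
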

\begin{proof}
    This is a special case of \cite[Proposition 12.7.13.4]{Nekovar}.
\end{proof}

\subsection{Extended Mordell-Weil Group} 
Let $K'_\p := \prod_{w|\p} K'_w$. For every such $w$, $E/K'_w$ has split multiplicative reduction and hence we have Tate parametrization
\begin{align*}
    \Phi_{Tate} : \overline{\qp}^*/q_E^\z \rightarrow E(\overline{\qp})
\end{align*}
where $q_E \in F_\mathfrak{p}$ with $\mathrm{ord}_p(q_E)>0$. We write by an abuse of notation the direct sum
\begin{align*}
    \Phi_{Tate} : (K'_\mathfrak{p})^* \rightarrow \bigoplus_{w|\p} E(K'_w).
\end{align*}
Define the extended Mordell-Weil group as follows
\begin{align*}
    E^\dagger(K') := \{(P,\widetilde{P}) : P \in E(K'), \widetilde{P} = (\widetilde{P_w})_{w|\p} \in (K'_\mathfrak{p})^*, \Phi_{Tate}(\widetilde{P_w}) = (i_w(P))_{w|\p} \}.
\end{align*}
For $w|\p$, write $q_w$ for the element of $E^\dagger(K')$, which has 0 in the component of $E(K')$, $q_E$ in the $w$ component and 1 in the other. We have a short exact sequence
\begin{align*}
    0 \rightarrow \oplus_{w|\p} \z \rightarrow E^\dagger(K') \rightarrow E(K') \rightarrow 0
\end{align*}
where the first map sends the element with 1 in the $w$-component and 0 in the other to $q_w$ and the second is the projection map. Note that $K'_\p \cong K' \otimes_F F_\p$. When $Gal(K'/F)$ is non trivial, we let $Gal(K'/F)$ act on $K'_\p$ by its action on the first component and let it act on $E^\dagger(K')$ diagonally. Define a map 
\begin{align} \label{defn of iedagger}
    i_E^\dagger : E^\dagger(K') \rightarrow \widetilde{H}^1_f(K',V_f)
\end{align}
as follows. Let $(P,\widetilde{P}) \in E^\dagger(K'), \widetilde{P} = (\widetilde{P_w}) \in (K'_\mathfrak{p})^*$. Since $\Phi_{Tate}(\widetilde{P_w}) = i_w(P)$, we obtain $(\Phi_{Tate})_*(\gamma_{\widetilde{P_w}}) = \gamma_{i_w(P)}$. Recall from (\ref{comm diag kummer theory}) we also have that $res_w(\gamma_P) = i_w^{-1}(\gamma_{i_w(P)})$. So for all $w|\p$, we have 
\begin{align*}
   p_w^+(\gamma_{\widetilde{P_w}}) = res_w(\gamma_P). 
\end{align*}
So for every representative $\gamma_P^0 \in C^1_{cont}(G_{K',S'},V_f)$ and $\gamma_{\widetilde{P_w}}^0 \in C^1_{cont}(K'_w,L(1))$ of $\gamma_P$ and $\gamma_{\widetilde{P_w}}$, respectively there exists a unique $\epsilon_w^0 \in C^0_{cont}(K'_w,V_f)$ such that
\begin{align*}
    res_w(\gamma_P^0) = p_w^+(\gamma_{\widetilde{P_w}}^0) - \delta(\epsilon_w^0)
\end{align*}
where $\delta$ is the differential of $C^\bullet(K'_w,V_f).$ This $\epsilon_w^0$ is unique because $H^0(K'_w,V_f) = 0$. Now for all $w \nmid \p$ in $S'$, put $\gamma_{\widetilde{P_w}}^0 = 0$ and $\epsilon_w^0 = 0$. 
Now write 
\begin{align*}
    (P,\widetilde{P})^0 := \left( \gamma_P^0,(\gamma_{\widetilde{P_w}^0})_{w \in S'}, (\epsilon_w^0)_{w \in S'} \right) \in \widetilde{C}^1_f(G_{K',S'},V_f).
\end{align*}
Finally define $i_E^\dagger((P,\widetilde{P}))$ as the cohomology class of $(P,\widetilde{P})^0$. It can be checked easily that this map is well-defined using the definitions.

\begin{lem} \label{iedagger is an iso}
    Consider the map $i_E^\dagger : E^\dagger(K') \otimes L \rightarrow \widetilde{H}^1_f(K',V_f)$, induced by $i_E^\dagger$. Then $i_E^\dagger$ is injective. Moreover, if we assume that $\Varpi(E/K')_{p^\infty}$ is finite, then $i_E^\dagger$ is an isomorphism.
\end{lem}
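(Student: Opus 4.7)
The plan is to realize $i_E^\dagger$ as the middle arrow in a morphism of short exact sequences and then apply the snake lemma. Tensoring the defining sequence of $E^\dagger(K')$ with $L$ (which is flat over $\zp$) and combining with Proposition \ref{SesForSelmerComplex}(4) together with the classical Kummer sequence (\ref{ses for MW group}) yields a diagram with exact rows
\[
\begin{tikzcd}
0 \arrow[r] & \bigoplus_{w|\p} L \arrow[r] \arrow[d, "\alpha"] & E^\dagger(K') \otimes L \arrow[r] \arrow[d, "i_E^\dagger"] & E(K') \otimes L \arrow[r] \arrow[d, "\kappa"] & 0 \\
0 \arrow[r] & \bigoplus_{w|\p} L \arrow[r] & \widetilde{H}^1_f(K', V_f) \arrow[r, "\pi"] & H^1_f(K', V_f) \arrow[r] & 0
\end{tikzcd}
\]
where $\kappa$ is the global Kummer map (identifying $Sel_E(K') \otimes L$ with $H^1_f(K', V_f)$ as in Section \ref{kummer}), injective with cokernel $T_p(\Varpi(E/K')) \otimes L$, and $\alpha$ is the induced map on kernels.

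First I would check commutativity of the right square: by construction, $i_E^\dagger((P,\widetilde{P}))$ is represented by the cocycle $(\gamma_P^0, (\gamma_{\widetilde{P}_w}^0)_{w\in S'}, (\epsilon_w^0)_{w\in S'})$, so its image under $\pi$ is the class of $\gamma_P^0 \in H^1(G_{K',S'}, V_f)$, which is precisely $\kappa(P)$, lying in $H^1_f$ by the local conditions. For the left square, I would unwind $i_E^\dagger$ at the generator $q_w$: here $P = 0$ forces $\gamma_P^0 = 0$, so $i_E^\dagger(q_w)$ is represented by $(0, \gamma_{q_E}^0, \epsilon_w^0)$ for an appropriate 0-cochain $\epsilon_w^0$. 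Under Nekov\'ar's identification of $\bigoplus_{w|\p} H^0(K'_w, V_f^-) = \bigoplus_{w|\p} L$ with the kernel of $\pi$ arising from the mapping-cone long exact sequence underlying Proposition \ref{SesForSelmerComplex}(4), the Tate period $q_E$ corresponds (via $p_w^-$ and the Tate parametrization) to a canonical generator of $V_f^- \cong L$; tracing this identification shows that $\alpha$ is the identity, up to a nonzero scalar on each $w$-summand.

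Granting the diagram, the snake lemma gives $\ker(i_E^\dagger) = 0$ (since $\alpha$ and $\kappa$ are injective) and $\coker(i_E^\dagger) \cong \coker(\kappa) \cong T_p(\Varpi(E/K')) \otimes L$. This proves injectivity unconditionally. When $\Varpi(E/K')_{p^\infty}$ is finite, $T_p(\Varpi(E/K'))$ vanishes, so $\coker(i_E^\dagger) = 0$ and $i_E^\dagger$ is an isomorphism.

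The main obstacle is the explicit identification of $\alpha$ in Step 3, namely verifying that the Nekov\'ar class of $q_w$ is sent to the canonical generator of the $w$-th copy of $L = H^0(K'_w, V_f^-)$ that produces the splitting in Proposition \ref{SesForSelmerComplex}(4). This is precisely where the split multiplicative reduction hypothesis at $\p$ intervenes, via the fact that the Tate parametrization identifies $V_f^+$ with $L(1)$ and $V_f^-$ with the trivial representation $L$, and the Kummer class of the Tate period $q_E$ pairs nontrivially with this decomposition. Once this bookkeeping is carried out carefully, the snake lemma argument is entirely routine.
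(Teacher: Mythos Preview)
Your proposal is correct and follows essentially the same approach as the paper: set up the morphism of short exact sequences with $i_E^\dagger$ in the middle and apply the snake lemma together with $\mathrm{coker}(\kappa) \cong T_p(\Varpi(E/K')) \otimes L$. The only difference is that the paper simply asserts the left vertical arrow is the identity (drawing an equals sign), whereas you flag the explicit identification of $\alpha$ as the main point needing care; your unwinding of $i_E^\dagger(q_w)$ is exactly the right way to justify this, and indeed the paper later carries out precisely this computation (showing $i_E^\dagger(q_w) = Q_w$) in the proof of Theorem \ref{PairingInTermsOfReciprocityMap}.
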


\begin{proof}
    We have the following commutative diagram with exact rows
    $$
    \begin{tikzcd}
	   0 \arrow[r] & 
      \oplus_{v|\p} L \arrow[r] \arrow[d,equal] &
	   E^\dagger(K') \otimes L \arrow[r,'] \arrow[d,"i_E^\dagger"] &
	   E(K') \otimes L \arrow[r] \arrow[d,"\kappa"] &
	   0
	   \\
	   0 \arrow[r] &
	   \oplus_{v|\p} L \arrow[r] &
	   \widetilde{H}^1_f(K',V_f) \arrow[r] &
	   H^1_f(K',V_f) \arrow[r] &
	   0.
    \end{tikzcd}
    $$
    Since the Kummer map $\kappa$ is injective, by the Snake lemma, $i_E^\dagger$ is injective. Now assume that $\Varpi(E/K')_{p^\infty}$ is finite. In order to show that $i_E^\dagger$ is surjective, it suffices to show that $coker(\kappa) = 0$. But we know that $coker(\kappa) = Ta_p(\Varpi(E/K')) \otimes L$ which is zero because $\Varpi(E/K')_{p^\infty}$ is finite. 
\end{proof}

\section{A $p$-adic Weight Pairing} \label{p-adic weight pairing section}
In this section, we introduce a $p$-adic weight pairing, which is used to relate the first cohomology group of the Selmer complex with the strict Selmer group (Corollary \ref{NonDegeneracyEquivalence}). Moreover, we use this relation to determine the structure of the strict Selmer group in Theorem \ref{ineq4}.

Recall from Section \ref{sec 6} that $\chi \in \{1,\epsilon_K\}$ is a quadratic character of $F$ of conductor coprime with $\mathfrak{np}$ and $K' \in \{F,K\}$ is the extension of $F$ associated with $\chi$. Define the strict Selmer group of $\T_\f/K'$ as
\begin{align*}
    Sel^{cc}_{Gr}(\f/K') := ker \left(H^1(G_{K',S'},\T_\f \otimes_\I \I^*) \rightarrow \prod_{w|\mathfrak{p}} H^1(K'_w,\T_{\f,w}^- \otimes_\I \I^*) \right)
\end{align*}
and denote by $X^{cc}_{Gr}(\f/K') := \mathrm{hom}_{\zp}(Sel^{cc}_{Gr}(\f/K'),\qp/\zp)$ its Pontryagin dual.
\\
For every $\z[Gal(K'/F)]$-module $M$, write $M^\chi$ for the submodule of $M$ on which $Gal(K'/F)$ acts via $\chi$, so that if $\chi$ is trivial, $M^\chi = M$ and otherwise $M^\chi = \{m \in M | \sigma \cdot m = -m \}$, where $\sigma$ is the nontrivial element of $Gal(K/F)$. The aim of this section is to prove the following theorem.

\begin{thm} \label{ineq4}
 Let $\chi \in \{1, \epsilon_K\}$ be a quadratic character of $F$ as above. Assume that 
 \begin{enumerate}
     \item $\chi(\p) = 1$,
     \item $\mathrm{rank}_\z \hspace{0.01mm} \hspace{1mm} E(K')^\chi = 1$, and
     \item $\Varpi(E/K')^\chi_{p^\infty}$ is finite.
 \end{enumerate}
 Then
 \begin{align*}
     X^{cc}_{Gr}(\f/K')^\chi \otimes_\I \I_P \cong \I_P/P\I_P.
 \end{align*}
\end{thm}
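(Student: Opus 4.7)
The plan is to adapt Venerucci's rank-one strategy from \cite{Rodolfo} to the real quadratic setting, reducing the theorem to a length computation for a piece of Nekovář's Selmer complex cohomology.

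The first step is to use Nekovář's Poitou–Tate duality together with the self-duality $\pi$ of $\T_\f$ from Section \ref{BigGaloisRepresentation} to identify $X^{cc}_{Gr}(\f/K')^\chi \otimes_\I \I_P$ with an appropriate subquotient of $\widetilde{H}^2_f(K', \tfp)^\chi$. The local condition at $w \mid \p$ defining $Sel^{cc}_{Gr}$ (killing the image in $H^1(K'_w, \T^-_{\f,w} \otimes_\I \I^*)$) is precisely the annihilator of the Nekovář condition $U^+_w$ under local Tate duality, so this identification is natural in the Selmer complex formalism.

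Next I would pin down $\dim_L \widetilde{H}^1_f(K', V_f)^\chi = 2$. By Lemma \ref{iedagger is an iso}, the finiteness of $\Varpi(E/K')^\chi_{p^\infty}$ gives $E^\dagger(K') \otimes L \cong \widetilde{H}^1_f(K', V_f)$; the $\chi$-part of the defining exact sequence of $E^\dagger(K')$ contributes one dimension from the rank-one hypothesis on $E(K')^\chi$ and one further dimension from the $q_\p$-term, a single copy on the $\chi$-part because $\chi(\p) = 1$. Combining this with Proposition \ref{SesForSelmerComplex}(2) taken on $\chi$-components,
\begin{equation*}
0 \to \widetilde{H}^1_f(K',\tfp)^\chi/\varpi \to \widetilde{H}^1_f(K',V_f)^\chi \to \widetilde{H}^2_f(K',\tfp)^\chi[\varpi] \to 0,
\end{equation*}
with the freeness of $\widetilde{H}^1_f(K',\tfp)^\chi$ from Proposition \ref{SesForSelmerComplex}(3) and the adjunction (\ref{H^1-H^2 relation}) (so that the common generic $\ip$-rank of $\widetilde{H}^1_f$ and $\widetilde{H}^2_f$ on the $\chi$-part is a single integer $r_\chi$), one obtains the constraint $r_\chi + \dim_L \widetilde{H}^2_f(K',\tfp)^\chi[\varpi] = 2$.

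The main technical obstacle is showing $r_\chi = 1$ via an explicit $p$-adic weight pairing. Composing the Nekov\'ar cup product $\cup_{Nek}$ with the analytic morphism $M : \ip \to \mathcal{R}$ and extracting the first jet in the weight variable at $k = 2$ defines a symmetric bilinear form on $\widetilde{H}^1_f(K', V_f)^\chi$. Evaluating this pairing on the two canonical generators $i_E^\dagger(q_\p)$ and $i_E^\dagger(P, \widetilde{P})$, where $P$ generates $E(K')^\chi \otimes \q$, should yield, up to a nonzero rational factor, the Greenberg–Stevens $\mathcal{L}$-invariant $\log_p(q_E)/\mathrm{ord}_\p(q_E) \neq 0$. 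This is the analogue of Venerucci's central computation in the rational case, and it is here that the split multiplicative hypothesis at the inert prime $\p$ enters decisively. Since any class coming from the image of $\widetilde{H}^1_f(K',\tfp)^\chi/\varpi$ pairs trivially under the weight derivation, non-vanishing of the pairing forces that image to be one-dimensional, whence $r_\chi = 1$.

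Finally, combining $r_\chi = 1$ and $\dim_L \widetilde{H}^2_f(K',\tfp)^\chi[\varpi] = 1$ with the non-degenerate alternating Cassels–Tate pairing (\ref{Cassels Tate Pairing}) restricted to the $\chi$-isotypic torsion of $\widetilde{H}^2_f(K',\tfp)$ — which forces a prescribed elementary-divisor structure compatible with the alternation — and then tracking the duality identification of Step 1, one extracts precisely a single elementary divisor as the image of $X^{cc}_{Gr}$, yielding the claimed isomorphism $X^{cc}_{Gr}(\f/K')^\chi \otimes_\I \I_P \cong \ip/\varpi = \I_P/P\I_P$.
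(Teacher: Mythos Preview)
Your architecture is right --- compute $\dim_L \widetilde{H}^1_f(K',V_f)^\chi = 2$ via Lemma~\ref{iedagger is an iso}, analyse a $p$-adic weight pairing on this space, and deduce the length of $X^{cc}_{Gr}$ via duality --- but the core computation has the wrong numbers. The weight pairing $\langle\,,\,\rangle^{Nek}_{K',\chi}$ constructed from $\cup_{CT}$ and $i_P$ is \emph{alternating}, not symmetric (Proposition~\ref{NonDegeneracyOfPairing}); on a two-dimensional space an alternating form is either zero or non-degenerate, so a single nonzero off-diagonal entry settles everything. That entry is $\langle q_\chi, P^\dagger\rangle^{Nek}_{K'} \doteq \log_E(P)$ by Corollary~\ref{PairingInTermsOfLog}, nonzero because the point $P$ has infinite order --- not the Greenberg--Stevens $\mathcal{L}$-invariant you name (that quantity enters instead in the separate local computation of Lemma~\ref{lem7.5}).

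More seriously, your target $r_\chi = 1$ cannot be correct: if $\widetilde{H}^1_f(K',\tfp)^\chi$ had $\ip$-rank~$1$ then by (\ref{H^1-H^2 relation}) so would $\widetilde{H}^2_f(K',\tfp)^\chi$, and since $X^{cc}_{Gr}(\f/K')^\chi\otimes_\I\ip$ embeds in the latter with torsion cokernel (the exact sequence preceding Corollary~\ref{NonDegeneracyEquivalence}), $X^{cc}_{Gr}$ would fail to be $\ip$-torsion, contradicting the claimed isomorphism. The actual mechanism is that non-degeneracy of the alternating pairing forces $\widetilde{H}^2_f(K',\tfp)^\chi$ to be torsion \emph{and} semi-simple (Corollary~\ref{NonDegeneracyEquivalence}), hence $r_\chi = 0$ and $len_P(\widetilde{H}^2_f(K',\tfp)^\chi) = \dim_L\widetilde{H}^1_f(K',V_f)^\chi = 2$; subtracting the local contribution $(\oplus_{w\mid\p} H^2(K'_w,\T^+_{\f,w}\otimes_\I\ip))^\chi \cong \ip/P\ip$ from Lemma~\ref{lem7.5} then yields $len_P(X^{cc}_{Gr}(\f/K')^\chi) = 1$.
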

We prove this using the $p$-adic weight pairing defined in the next section.

\subsection{Definition of $p$-adic Weight Pairing} 
Define the $p$-adic weight pairing as
\begin{align*} 
    \langle -,- \rangle^{Nek}_{K'} : \widetilde{H}^1_f(K',V_f) \times \widetilde{H}^1_f(K',V_f) \rightarrow \ip/\varpi \xrightarrow[\varphi]{\sim} L \\
    \langle x,y \rangle^{Nek}_{K'} :=  l_{wt} \cdot \varphi(i_P(x) \cup_{CT} i_P(y))
\end{align*}
where $l_{wt} := \log_p(\gamma)$, $i_P$ is the map in Proposition \ref{SesForSelmerComplex} and $\cup_{CT}$ is as in (\ref{Cassels Tate Pairing}).

\begin{prop} \label{NonDegeneracyOfPairing}
    \begin{enumerate}
        \item $\langle x,x \rangle^{Nek}_{K'} = 0$ for every $x \in \widetilde{H}^1_f(K',V_f)$.

        \item $\langle -,- \rangle^{Nek}_{K'}$ is a $Gal(K'/F)$-equivariant pairing.
    \end{enumerate}
\end{prop}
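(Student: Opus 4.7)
Both parts follow almost formally from the properties of the generalised Cassels--Tate pairing $\cup_{CT}$ established in the preceding theorem, together with the functoriality of all constructions in the ground field $K'$. The plan is to unwind the definition of $\langle -,- \rangle^{Nek}_{K'}$ into its three building blocks --- the connecting map $i_P$ from Proposition~\ref{SesForSelmerComplex}(2), the pairing $\cup_{CT}$ from (\ref{Cassels Tate Pairing}), and the identification $\varphi : \ip/\varpi \xrightarrow{\sim} L$ --- and check the required properties on each building block separately.

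For part (1), the key input is that $\cup_{CT}$ is alternating as a pairing on $\widetilde{H}^2_f(K',\tfp)_{\ip-\mathrm{tor}}$. Since $i_P$ takes values in $\widetilde{H}^2_f(K',\tfp)[\varpi]$, which lies inside the $\ip$-torsion submodule where $\cup_{CT}$ is defined, we can apply the alternating property directly: $i_P(x) \cup_{CT} i_P(x) = 0$. Multiplying by the scalar $l_{wt}$ and applying $\varphi$ gives $\langle x,x \rangle^{Nek}_{K'} = 0$. This step is essentially a tautology, the real work having already been carried out in the proof of the non-degeneracy/alternating property of $\cup_{CT}$.

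For part (2), I would argue that each of the three constituent operations is functorial in the base field, hence $Gal(K'/F)$-equivariant. The action of $\sigma \in Gal(K'/F)$ on $\widetilde{H}^q_f(K',X)$ (for $X$ among $V_f, \tfp, \T_\f$) is induced by conjugation on cocycles together with relabeling of the places $w \in S_{K'}$. The exact triangle $\widetilde{\mathbf{R}\Gamma}_f(K',\tfp) \xrightarrow{\varpi} \widetilde{\mathbf{R}\Gamma}_f(K',\tfp) \to \widetilde{\mathbf{R}\Gamma}_f(K',V_f)$ is natural in $K'$, so its connecting map $i_P$ commutes with this action. The Cassels--Tate pairing $\cup_{CT}$ is constructed from the cup product paired against the $G_F$-equivariant form $\pi : \T_\f \otimes \T_\f \to \I(1)$; since $\pi$ is defined at the level of $G_F$ (not just $G_{K'}$), and the construction of $\cup_{\pi,r}$ in Section 6 only uses $\pi$ and the structure of the Selmer complex in a functorial way, the resulting pairing $\cup_{CT}$ commutes with the $Gal(K'/F)$-action (and the target $\mathscr{I}/\ip$ carries trivial $Gal(K'/F)$-action). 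Finally $\varphi$ and the scalar $l_{wt} \in L$ are insensitive to the Galois action.

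The only step that requires genuine care is the Galois-equivariance of $\cup_{CT}$, since it is built out of a chain of derived-category manoeuvres (the complex $\mathcal{C}$, the swap $s_{23}$, and the quasi-isomorphism $v_{\ip}$). I would resolve this by noting that $s_{23}$ and $v_{\ip}$ involve no choices depending on $K'$, and that $\pi$ being $G_F$-equivariant makes $\cup_\pi$ a morphism of $Gal(K'/F)$-modules after taking cohomology; equivariance then propagates through the maps $H^2(\widetilde{C}^\bullet_f(\tfp) \otimes_{\ip} \mathcal{C}) \to H^4(\mathfrak{C} \otimes_{\ip} \mathcal{C}) \cong \mathscr{I}/\ip$. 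This bookkeeping is the main (and only) obstacle in the proof.
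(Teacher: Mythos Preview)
Your proposal is correct and follows essentially the same approach as the paper. The paper's own proof consists of a single sentence --- ``Both follow by the properties of $i_P$ and $\cup_{CT}$'' --- so your expansion into the alternating property of $\cup_{CT}$ for part (1) and the functoriality/$G_F$-equivariance of $\pi$ for part (2) simply spells out what the paper leaves implicit.
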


\begin{proof}
    \begin{enumerate}
        \item By Theorem \ref{cup is alternating}, the cup-product $\cup_{CT}$ is an alternating pairing and hence, the assertion follows.
        \item This follows from the fact that both $i_P$ and $\cup_{CT}$ are $Gal(K'/F)$-equivariant.
    \end{enumerate}
\end{proof}

Write $\langle -,- \rangle^{Nek}_{K',\chi}$ for the restriction of $\langle -,- \rangle^{Nek}_{K'}$ to $\widetilde{H}^1_f(K',V_f)^\chi \times \widetilde{H}^1_f(K',V_f)^\chi$.
\\
If $M$ is an $\I$-module, we say that $M$ is semi-simple at $P$ if $M_P$ is a semi-simple $\ip$-module and recall, we write $\mathrm{len}_P(M)$ to denote the length of $M_P$ as an $\ip$-module.

\begin{prop}
    Let $\chi \in \{1,\epsilon_K\}$ be a quadratic character of conductor coprime to $\n\p$ such that $\chi(\p) = 1$. Then $\langle -,- \rangle^{Nek}_{K',\chi}$ is non-degenerate if and only if $\widetilde{H}^2_f(K',\tfp)^\chi$ is a torsion, semisimple $\ip$-module.
\end{prop}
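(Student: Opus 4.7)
The plan is to unfold the definition of $\langle-,-\rangle^{Nek}_{K',\chi}$ as the composite of the connecting map $i_P$ and the Cassels--Tate pairing $\cup_{CT}$, then to track what "non-degeneracy" means on each factor using the structure of modules over the DVR $\ip$.

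\textbf{Step 1: Taking $\chi$-eigenspaces.} Because $|\mathrm{Gal}(K'/F)|\in\{1,2\}$ is a unit in $\ip$ (recall $p>5$), the $\chi$-eigenspace functor is exact on $\ip[\mathrm{Gal}(K'/F)]$-modules and preserves direct summands. Applying it to the exact sequence of Proposition~\ref{SesForSelmerComplex}(2) for $q=1$ yields
\begin{align*}
    0\to \widetilde{H}^1_f(K',\tfp)^\chi/\varpi \to \widetilde{H}^1_f(K',V_f)^\chi \xrightarrow{i_P} \widetilde{H}^2_f(K',\tfp)[\varpi]^\chi \to 0.
\end{align*}
By Proposition~\ref{SesForSelmerComplex}(3), $\widetilde{H}^1_f(K',\tfp)^\chi$ is a direct summand of a free $\ip$-module and hence itself free.

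\textbf{Step 2: Factoring the pairing.} By construction $\langle x,y\rangle^{Nek}_{K',\chi}=l_{wt}\cdot\varphi\bigl(i_P(x)\cup_{CT}i_P(y)\bigr)$ with $l_{wt}\in\ip^\times$ and $\varphi$ an isomorphism, so $\langle-,-\rangle^{Nek}_{K',\chi}$ is non-degenerate if and only if the following two conditions both hold: (a) $\ker(i_P|_{\chi})=\widetilde{H}^1_f(K',\tfp)^\chi/\varpi$ vanishes, which by Nakayama's lemma is equivalent to $\widetilde{H}^1_f(K',\tfp)^\chi=0$; (b) the restriction of $\cup_{CT}$ to $\widetilde{H}^2_f(K',\tfp)[\varpi]^\chi$ is non-degenerate.

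\textbf{Step 3: Translating (a) via Nekov\'a\v{r} duality.} The adjunction isomorphism (\ref{H^1-H^2 relation}) is $\mathrm{Gal}(K'/F)$-equivariant, and since $\chi$ is quadratic so that $\chi=\chi^{-1}$, the $\chi$-eigenspaces are paired. Thus
\begin{align*}
    \widetilde{H}^1_f(K',\tfp)^\chi\otimes_{\ip}Frac(\ip)\cong \mathrm{Hom}_{Frac(\ip)}\bigl(\widetilde{H}^2_f(K',\tfp)^\chi\otimes_{\ip}Frac(\ip),\,Frac(\ip)\bigr).
\end{align*}
Since $\widetilde{H}^1_f(K',\tfp)^\chi$ is torsion-free, it vanishes iff the right-hand side does, iff $\widetilde{H}^2_f(K',\tfp)^\chi$ is $\ip$-torsion. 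So (a) $\iff$ $\widetilde{H}^2_f(K',\tfp)^\chi$ is torsion.

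\textbf{Step 4: Translating (b) via the DVR structure.} Assume (a), so $\widetilde{H}^2_f(K',\tfp)^\chi$ equals its torsion part $M$. By the structure theorem, decompose $M\cong\bigoplus_i \ip/\varpi^{n_i}$; then $M[\varpi]\cong \bigoplus_i \varpi^{n_i-1}\ip/\varpi^{n_i}$. The Cassels--Tate pairing restricts to a non-degenerate alternating pairing on $M$ (non-degeneracy of the $\chi$-part follows from $Gal$-equivariance and $\chi=\chi^{-1}$). A direct elementary calculation shows: a generator $\varpi^{n_i-1}$ of the $i$-th summand of $M[\varpi]$ pairs to $0$ with every $\varpi^{n_j-1}$ whenever $n_i\ge 2$ or $n_j\ge 2$, because any value of $\cup_{CT}(M[\varpi],M)$ lies in $\varpi^{-1}\ip/\ip$ and the further restriction of this value to $M[\varpi]$ requires multiplication by $\varpi^{n_j-1}$, which annihilates $\varpi^{-1}\ip/\ip$ for $n_j\ge 2$. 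Thus (b) holds iff $n_i=1$ for all $i$, i.e., iff $M$ is semi-simple.

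Combining Steps 1--4 gives the claimed equivalence. The main technical point to verify carefully is Step 4, namely that restriction of a non-degenerate pairing valued in $\mathscr{I}/\ip$ to $\varpi$-torsion loses non-degeneracy precisely on the non-elementary summands; the rest is a bookkeeping exercise combining the Selmer complex exact sequence, the freeness result of Nekov\'a\v{r}, and the self-duality (\ref{H^1-H^2 relation}).
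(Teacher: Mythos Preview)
Your proof is correct and follows essentially the same route as the paper: factor the weight pairing through $i_P$ and $\cup_{CT}$, translate injectivity of $i_P$ into the torsion condition on $\widetilde{H}^2_f(K',\tfp)^\chi$ via Nekov\'a\v{r}'s duality (\ref{H^1-H^2 relation}) together with the freeness of $\widetilde{H}^1_f$, and then reduce non-degeneracy of $\cup_{CT}$ restricted to $\varpi$-torsion to semisimplicity using the structure theorem over the DVR $\ip$. The paper records the last step as the identity $\ker(\cup'_{CT})=\varpi M^\chi\cap M^\chi[\varpi]$, which is exactly your Step~4 computation in different dress; your treatment of the $\chi$-equivariance in Steps~1 and~3--4 is in fact slightly more explicit than the paper's.
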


\begin{proof}
    Write $M := \widetilde{H}^2_f(K',\tfp)_{\ip-tors}$. Then $M$ is a finitely generated module over the discrete valuation ring $\ip$. So by the structure theorem of finitely generated modules over principal ideal domains, we have an isomorphism of $\ip$-modules
        \begin{align*}
            M \xrightarrow{\sim} \oplus_{j = 0}^{n} \ip/(\varpi)^{e_j} 
        \end{align*}
        for $1 \leq e_1 \leq \ldots \leq e_n$. Write $\cup'_{CT}$ for the restriction of $\cup_{CT}$ to ${M^\chi[\varpi] \times M^\chi[\varpi]}$.
        \\
        \textbf{Claim 1} : $\langle -,- \rangle^{Nek}_{K',\chi}$ is non-degenerate if and only if $\cup'_{CT}$ is non-degenerate and $\widetilde{H}^1_f(K',\tfp)^\chi = 0$ or equivalently, using the isomorphism (\ref{H^1-H^2 relation}), $\cup'_{CT}$ is non-degenerate and $\widetilde{H}^2_f(K',\tfp)^\chi$ is a torsion $\ip$-module.
        \\
        Suppose $\cup'_{CT}$ is non-degenerate and $\widetilde{H}^1_f(K',\tfp)^\chi = 0$. Let $\langle x,y \rangle^{Nek}_{K',\chi} = 0$ for all $y \in \widetilde{H}^1_f(K',V_f)^\chi$. So, $i_P(x) \cup'_{CT} i_P(y) = 0$ for every $y$. Since $\cup'_{CT}$ is non-degenerate, we have $x \in ker(i_P) = \widetilde{H}^1_f(K',\T_\f)/\varpi = 0$. Conversely, let $\langle -,- \rangle^{Nek}_{K',\chi}$ be non-degenerate and suppose $x' \cup'_{CT} y' = 0$ for every $y' \in M^\chi[\varpi]$. Since $i_P$ is surjective, there exist $x,y \in \widetilde{H}^1_f(K',\tfp)^\chi$ such that $i_P(x) = x'$ and $i_P(y) = y'$. So we have that $\langle x,y \rangle^{Nek}_{K',\chi} = i_P(x) \cup'_{CT} i_P(y) = 0$ for every $y$. By non-degeneracy of $\langle -,- \rangle^{Nek}_{K',\chi}$, we get that $x' = 0$. Moreover, using similar arguments we get that $i_P$ is injective. Now since $ker(i_P) = \widetilde{H}^1_f(K',\tfp)^\chi/\varpi$ and $\widetilde{H}^1_f(K',\tfp)$ is a free $\ip$-module, we get that $\widetilde{H}^1_f(K',\tfp)^\chi = 0$.
        \\
        
        \textbf{Claim 2} :  $\cup'_{CT}$ is non-degenerate if and only if $e_j = 1$ for all $j$, or, in other words, $M$ is semisimple over $\ip$.
         Note that $ker(\cup'_{CT}) = \varpi M^\chi \cap M^\chi[\varpi]$. First, suppose that $\cup'_{CT}$ is non-degenerate, i.e., $\varpi M^\chi \cap M^\chi[\varpi] = 0$ and there is a $j$ such that $e_j > 1$. Fix such a $j$. Choose $r = (0,\ldots,\varpi^{e_j-1} \hspace{1mm} mod \hspace{1mm} \varpi^{e_j},\ldots 0) \in \oplus_{i = 0}^{n} \ip/(\varpi)^{e_i}$. Then $r \neq 0$. Now clearly, $\varpi r = 0$ and $r = \varpi(0,\ldots,\varpi^{e_j-2} \hspace{1mm} mod \hspace{1mm} \varpi^{e_j},\ldots 0)$. So $0 \neq x \in \varpi M^\chi \cap M^\chi[\varpi]$, which is a contradiction. Converse follows easily.
\end{proof}

\begin{prop} \label{length dim equality}
    $\mathrm{len}_P(\widetilde{H}^2_f(K',\T_\f)^\chi) \geq \mathrm{dim}_L(\widetilde{H}^1_f(K',V_f)^\chi)$ if and only if $\widetilde{H}^2_f(K',\T_{\f,P})^\chi$ is a torsion $\I_P$-module. In addition to this, if $\widetilde{H}^2_f(K',\T_{\f,P})^\chi$ is a semi-simple $\I_P$-module then we have an equality.
\end{prop}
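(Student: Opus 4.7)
The plan is to read off both sides from the exact sequence in Proposition \ref{SesForSelmerComplex}(2) after taking $\chi$-isotypic parts, using the structure theorem for finitely generated modules over the DVR $\I_P$ and the duality provided by $\cup_{Nek}$.

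First, since $|\mathrm{Gal}(K'/F)| \leq 2$ and $p > 5$, taking $\chi$-parts is exact, so Proposition \ref{SesForSelmerComplex}(2) specialises to
\begin{align*}
0 \to \bigl(\widetilde{H}^1_f(K',\tfp)/\varpi\bigr)^\chi \to \widetilde{H}^1_f(K',V_f)^\chi \xrightarrow{i_P} \widetilde{H}^2_f(K',\tfp)[\varpi]^\chi \to 0.
\end{align*}
By Proposition \ref{SesForSelmerComplex}(3), $\widetilde{H}^1_f(K',\tfp)$ is free over $\I_P$, so its $\chi$-part is again free; set $r := \mathrm{rank}_{\I_P}\,\widetilde{H}^1_f(K',\tfp)^\chi$, so that $\dim_L \bigl(\widetilde{H}^1_f(K',\tfp)^\chi/\varpi\bigr) = r$. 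Apply the structure theorem to write
\begin{align*}
\widetilde{H}^2_f(K',\tfp)^\chi \;\cong\; \I_P^{\,s} \,\oplus\, \bigoplus_{i=1}^{n} \I_P/(\varpi^{e_i}), \qquad e_i \geq 1,
\end{align*}
so that $\dim_L \widetilde{H}^2_f(K',\tfp)^\chi[\varpi] = s + n$.

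Next, I would invoke the cup-product duality (\ref{H^1-H^2 relation}): since $\pi$ is $G_F$-equivariant, $\cup_{Nek}$ is $\mathrm{Gal}(K'/F)$-equivariant, and restricting the adjunction to $\chi$-components and tensoring with $\mathrm{Frac}(\I_P)$ gives
\begin{align*}
\widetilde{H}^1_f(K',\tfp)^\chi \otimes \mathrm{Frac}(\I_P) \;\cong\; \mathrm{Hom}_{\mathrm{Frac}(\I_P)}\bigl(\widetilde{H}^2_f(K',\tfp)^\chi \otimes \mathrm{Frac}(\I_P),\,\mathrm{Frac}(\I_P)\bigr).
\end{align*}
Comparing generic ranks yields $r = s$. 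Plugging into the exact sequence above,
\begin{align*}
\dim_L \widetilde{H}^1_f(K',V_f)^\chi \;=\; r + (s + n) \;=\; 2s + n.
\end{align*}
On the length side, $\mathrm{len}_P\bigl(\widetilde{H}^2_f(K',\T_\f)^\chi\bigr) = \mathrm{length}_{\I_P}\widetilde{H}^2_f(K',\tfp)^\chi$ equals $\sum_{i=1}^n e_i$ when $s=0$ and is infinite when $s>0$.

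Finally, I would conclude: the length is a finite integer (and so can be compared to $\dim_L \widetilde{H}^1_f(K',V_f)^\chi$) precisely when $s=0$, i.e.\ when $\widetilde{H}^2_f(K',\tfp)^\chi$ is $\I_P$-torsion. In that case $r = 0$, the inequality becomes $\sum_{i=1}^n e_i \geq n$, which holds since each $e_i \geq 1$; and equality forces every $e_i = 1$, meaning precisely that $\widetilde{H}^2_f(K',\tfp)^\chi$ is semi-simple. There is no serious obstacle here beyond ensuring exactness of taking $\chi$-parts and the $\mathrm{Gal}(K'/F)$-equivariance of $\cup_{Nek}$; the latter is where one has to be a little careful, but it is immediate from the $G_F$-equivariance of the pairing $\pi$ constructed in Section \ref{BigGaloisRepresentation}.
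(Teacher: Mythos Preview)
Your approach is essentially the paper's: both arguments combine the exact sequence of Proposition~\ref{SesForSelmerComplex}(2), the duality isomorphism~(\ref{H^1-H^2 relation}) to relate the ranks of $\widetilde{H}^1_f$ and $\widetilde{H}^2_f$, and the structure theorem over the DVR $\I_P$. The paper proceeds by assuming torsion up front (so $N_1=0$) and then computing $\mathrm{len}_P N_2 - \dim_L N_2[\varpi] = \sum_{j\geq 2} m(j)(j-1)$, while you track the free rank $s$ explicitly; the content is the same.

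There is one computational slip you should fix: with $\widetilde{H}^2_f(K',\tfp)^\chi \cong \I_P^{\,s} \oplus \bigoplus_{i=1}^n \I_P/(\varpi^{e_i})$, the $\varpi$-torsion of the free part $\I_P^{\,s}$ is zero (as $\I_P$ is a domain), so $\dim_L \widetilde{H}^2_f(K',\tfp)^\chi[\varpi] = n$, not $s+n$. Consequently $\dim_L \widetilde{H}^1_f(K',V_f)^\chi = r + n = s + n$, not $2s+n$. This does not damage your conclusion: when $s=0$ both formulas agree, and when $s>0$ the length is infinite either way, so the biconditional and the semi-simplicity criterion go through unchanged.
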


\begin{proof}
    Write $N_* := \widetilde{H}^*_f(K',\T_{\f,P})^\chi$ and $\mathcal{N}_* := \widetilde{H}^*_f(K',V_f)^\chi$. By Proposition \ref{SesForSelmerComplex}, we have a short exact sequence of $L$-modules
    \begin{align*}
        0 \rightarrow N_q/\vp \rightarrow \mathcal{N}_q \rightarrow N_{q+1}[\vp] \rightarrow 0.
    \end{align*}
    Since $N_2$ is a torsion $\I_P$-module, $N_1 = 0$ (using the isomorphism in (\ref{H^1-H^2 relation})). So $\mathcal{N}_1 \cong N_2[\vp]$ and therefore, their $L$-dimensions are equal. By the structure theorem of finitely-generated, torsion modules over a PID, 
    \begin{align*}
        N_2 = \oplus_{j=1}^{\infty} (\I_P/\vp^j)^{m(j)}
    \end{align*}
    where $m(j) = 0$ for all $j >> 0$. This gives us
    \begin{align*}
        N_2[\vp] \cong \oplus_{j=1}^{\infty} (\I_P/\vp)^{m(j)}.
    \end{align*}
    So,
    \begin{align*}
        \mathrm{len}_P N_2 = \sum_{j = 0}^{\infty} m(j) \cdot j & = \sum_{j = 1}^{\infty} m(j) + \sum_{j = 2}^{\infty} m(j)(j-1) \\
        & = \mathrm{dim}_L N_2[\vp] + \sum_{j = 2}^{\infty} m(j)(j-1) \\
        & = \mathrm{dim}_L \mathcal{N}_1 + \sum_{j = 2}^{\infty} m(j)(j-1).
    \end{align*}
    In particular, $\mathrm{len}_P N_2 \geq \mathrm{dim}_L \mathcal{N}_1$. Moreover, $\mathrm{len}_P N_2 = \mathrm{dim}_L \mathcal{N}_1$ if and only if $m(j) = 0$ for every $j \geq 2$ if and only if $N_2$ is semi-simple also. 
\end{proof}

\begin{lem} \label{lem7.5}
    $H^2(F_\p,\T_\f^+ \otimes_\I \I_P) \cong \I_P/P\I_P$.
\end{lem}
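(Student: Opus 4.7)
The plan is to apply local Tate duality for Iwasawa-theoretic coefficients (as in Nekov\'ar \cite{Nekovar}, Chapter~5): for a finitely generated compact $\ip[G_{F_\p}]$-module $T$, there is a natural $\ip$-linear isomorphism
\[
H^2(F_\p, T) \;\cong\; H^0(F_\p, T^{\vee}(1))^{\vee},
\]
where $T^{\vee} := \mathrm{Hom}_{\mathrm{cont}}(T, \qp/\zp)$ is the discrete Pontryagin dual with its natural $\ip$-action. I would apply this with $T = \T_\f^+ \otimes_\I \ip \cong \ip(\psi)$ and $\psi := \ma_\p^{*-1}\chi_{cy}\chi_\Gamma^{1/2}$ (the character of $\T_\f^+$). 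Unwinding the definitions, $T^{\vee}(1)$ becomes $\ip^{\vee}$ carrying the character $\phi := \psi^{-1}\chi_{cy} = \ma_\p^{*}\chi_\Gamma^{-1/2}$, which is precisely the character of $\T_\f^- \otimes_\I \ip$. The $G_{F_\p}$-invariants are then
\[
H^0(F_\p, \ip^{\vee}(\phi)) \;=\; \mathrm{Ann}_{\ip^{\vee}}(J) \;\cong\; (\ip/J)^{\vee}
\]
by Matlis duality for the DVR $\ip$, where $J := (\phi(g) - 1 : g \in G_{F_\p}) \subseteq \ip$. The problem thus reduces to computing $J$ and showing $J = P\ip$.

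For the inertia contribution to $J$, I would use that since $p$ is inert in $F$, the local field $F_\p/\qp$ is the unramified quadratic extension, so the cyclotomic $\zp$-extension $F\q_\infty/F$ is totally ramified at $\p$. Hence $\chi_\Gamma|_{I_{F_\p}}$ surjects onto $\Gamma$. Choosing $\tau \in I_{F_\p}$ with $\chi_\Gamma^{-1/2}(\tau) = \gamma^{-1/2}$, the identity $(\gamma^{1/2} - 1)(\gamma^{1/2} + 1) = \gamma - 1 = \varpi$, together with $\gamma^{1/2} + 1 \equiv 2 \pmod{P}$ being a unit in $\ip$ (using $p > 2$), shows that $\gamma^{-1/2} - 1$ is a unit multiple of $\varpi$. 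Varying $\tau$ over $I_{F_\p}$ only rescales this generator by a $\zp$-integer, so the inertia part of $J$ equals $\varpi \ip = P\ip$.

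For the Frobenius contribution, I would select a Frobenius lift $\mathrm{Fr}_\p \in G_{F_\p}$ with $\chi_\Gamma(\mathrm{Fr}_\p) = 1$; this is possible because inertia already surjects onto $\Gamma$, so the Frobenius coset maps onto $\Gamma$ under $\chi_\Gamma$. For such a lift, $\phi(\mathrm{Fr}_\p) - 1 = \ma_\p - 1$. The hypothesis of split multiplicative reduction at $\p$ forces $a_\p(f_E) = +1$, and by Hida's control theorem the weight-$2$ specialization of $\ma_\p$ equals $a_\p(f_E)$; hence $\ma_\p \equiv 1 \pmod{P}$ and $\ma_\p - 1 \in P\ip$. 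Combining both contributions, $J = P\ip$, so $\ip/J \cong L$, and therefore
\[
H^2(F_\p, \T_\f^+ \otimes_\I \ip) \;\cong\; L^{\vee} \;\cong\; L \;\cong\; \ip/P\ip.
\]

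The main obstacle I anticipate is the careful bookkeeping of the $\ip$-module structures through the Pontryagin and Matlis dualities, and ensuring the isomorphism tracks correctly through all the twists. Conceptually, the proof rests on two inputs conspiring: the inert prime assumption makes $\chi_\Gamma|_{I_{F_\p}}$ maximally ramified and produces the generator $\varpi$ of $P\ip$ from inertia, while the split multiplicative hypothesis is exactly what forces the Frobenius contribution $\ma_\p - 1$ to lie in $P\ip$ rather than being a unit. Had $a_\p(f_E) \neq +1$, $J$ would equal all of $\ip$ and $H^2$ would vanish; it is precisely the exceptional-zero-type condition $a_\p = +1$ that produces the nontrivial answer $\ip/P\ip$, consistent with the length-$2$ calculation for the strict Selmer group in Theorem \ref{ineq4}.
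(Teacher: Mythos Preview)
Your proof via local Tate duality is correct and genuinely different from the paper's. The paper proceeds instead by a Bockstein argument: from $0 \to \T_\f^+ \otimes \ip \xrightarrow{\vp} \T_\f^+ \otimes \ip \to L(1) \to 0$ it shows $H^1$ is $\ip$-free of rank~$2$ (via an Euler-characteristic count over $\I$), deduces $\dim_L H^2[\vp] = \dim_L H^2/\vp = 1$, hence $H^2 \cong \ip/\vp^n$ for some $n \geq 1$, and then proves $n=1$ by checking that the composite $H^1(F_\p,L(1)) \twoheadrightarrow H^2[\vp] \hookrightarrow H^2 \twoheadrightarrow H^2/\vp \cong L$ is nonzero --- an explicit calculation with $\frac{d}{d\vp}(\ma_\p^{*-1}\chi_\Gamma^{1/2})$ and the reciprocity map that brings in the $\mathscr{L}$-invariant and yields (up to constants) $\log_{N(q_E)}(q)$. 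Your argument bypasses this entirely by dualizing to $H^0$ and computing the ideal $J$ directly; the two inputs you isolate (inertia at $\p$ surjects onto $\Gamma$ because $F_\p/\qp$ is unramified, and $\ma_\p \equiv 1 \bmod P$ by split multiplicative reduction) are precisely what the paper's longer computation encodes. One technical caveat: Nekov\'ar's duality theorems in Chapter~5 are formulated for complete local rings with \emph{finite} residue field, which $\ip$ is not; the clean fix is to apply duality over $\I$ first (giving $H^2(F_\p,\T_\f^+) \cong \I/J$ as $\I$-modules) and then localize at $P$, rather than invoking duality directly over $\ip$. The paper's route has the incidental benefit that its explicit formula foreshadows the pairing computation in Corollary~\ref{PairingInTermsOfLog}, but your approach is more conceptual and considerably shorter.
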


\begin{proof}
     Since $\T_\f^+ \otimes_\I \I_P/\vp \cong L(1)$, there are short exact sequences
    \begin{align*}
        0 \rightarrow H^j(F_\p,\T_\f^+ \otimes_\I \I_P)/\vp \rightarrow H^j(F_\p,\qp(1)) \otimes L \rightarrow H^{j+1}(F_\p,\T_\f^+ \otimes_\I \I_P)[\vp] \rightarrow 0.
    \end{align*}
    Since $H^0(F_\p,\qp(1)) = 0$, the above sequence gives $H^1(F_\p,\T_\f^+ \otimes_\I \I_P)[\vp] = 0$ and hence $H^1(F_\p,\T_\f^+ \otimes_\I \I_P)$ is a free $\I_P$-module. Recall that $\T_\f^+ \cong \I(\ma_\p^{*-1}\chi_{cy}\chi_\Gamma^{1/2})$ and $\T_\f^- \cong \I(\ma_\p^*\chi_\Gamma^{-1/2})$, we have $H^0(F_\p,\T_\f^\pm) = 0$. So by the local Tate duality, $H^2(F_\p,\T_\f^+)$ is a torsion $\I$-module. By \cite[Corollary 4.6.10]{Nekovar} and \cite[5.2.11]{Nekovar}
    \begin{align} \label{SumOfRanks}
        \sum_{j=0}^{2} (-1)^j \mathrm{rank}_\I H^j(F_\p,\T_\f^+) = -2.
    \end{align}
    Since $H^0(F_\p,\T_\f^+) = 0$ and $H^2(F_\p,\T_\f^+)$ is a torsion $\I$-module and hence has $\I$-rank 0, we get using (\ref{SumOfRanks}) that $H^1(F_\p,\T_\f^+)$ has rank 2 as an $\I$-module and hence, $H^1(F_\p,\T_\f^+ \otimes_\I \I_P) \cong \I_P^2$. So we get the following exact sequences
    \begin{align*}
        0 \rightarrow (\I_P/\vp)^2 \rightarrow H^1(F_\p,\qp(1)) \otimes L \rightarrow H^2(F_\p,\T_\f^+ \otimes \I_P)[\vp] \rightarrow 0 \\
        0 \rightarrow H^2(F_\p,\T_\f^+ \otimes_\I \I_P)/\vp \rightarrow H^2(F_\p,\qp(1)) \otimes L \rightarrow 0.
    \end{align*}
    By \cite[Section 11.3]{Nekovar} $\mathrm{dim}_{\qp} H^1(F_\p,\qp(1)) = 3$ and $\I_P/\vp \cong L$, we get that $\mathrm{dim}_L H^2(F_\p,\T_\f^+ \otimes \I_P)[\vp] = 1$. Also, again by \cite[Section 11.3]{Nekovar} since $\mathrm{dim}_{\qp} H^2(F_\p,\qp(1)) = 1$, we have that $\mathrm{dim}_L H^2(F_\p,\T_\f^+ \otimes \I_P)/\vp = 1$. So by the structure theorem for finitely generated torsion modules over PID, we have that there is an $n \geq 1$ such that
    \begin{align*}
        H^2(F_\p,\T_\f^+ \otimes \I_P) \cong \I_P/\vp^n.
    \end{align*}
    In order to show that $n=1$, clearly it suffices to show that the map
    \begin{align*}
        \mathcal{F} : H^1(F_\p,L(1)) \twoheadrightarrow H^2(F_\p,\T_\f^+ \otimes_\I \I_P) [\vp] \hookrightarrow H^2(F_\p,\T_\f^+ \otimes_\I \I_P) \\
        \twoheadrightarrow H^2(F_\p,\T_\f^+ \otimes_\I \I_P)/\vp \xrightarrow{\sim} H^2(F_\p,L(1)) \xrightarrow[inv_\p]{\sim} L
    \end{align*}
    is non-zero.
    \\
    By Kummer theory, we have $H^1(F_\p,L(1)) \cong F_\p^\times \otimes L$. Let $q \in F_\p^\times$ be an arbitrary element. We show that $\mathcal{F}(q) := \mathcal{F}(q \otimes 1) \in L^\times$. Write $c_q : G_{F_\p} \rightarrow L(1)$ for a 1-cocycle representing $q \otimes 1$. Recall the surjective morphism $P : \I_P \twoheadrightarrow L$ and consider $c_q : G_{F_\p} \rightarrow \T_\f^+ \otimes \I_P$ as a 1-cochain that lifts $c_q$ under $P$. In $C^\bullet_{cont}(F_\p,\T_\f^+ \otimes \I_P)$ we have
    \begin{align*}
        d c_q(g,h) = \ma_\p^*(g)^{-1}\chi_{cy}(g)\chi_\Gamma(g)^{1/2}c_q(h) - c_q(gh) + c_q(g).
    \end{align*}
    Since $c_q(gh) = c_q(g) - \chi_{cy}(g)c_q(h)$, we get that 
    \begin{align*}
        d c_q(g,h) = \chi_{cy}(g)(\ma_\p^*(g)^{-1}\chi_\Gamma(g)^{1/2} - 1)c_q(h).
    \end{align*}
    Let $\varrho(g,h) := \chi_{cy}(g)c_q(h)P \left(\frac{\ma_\p^*(g)^{-1}\chi_\Gamma^{1/2}(g) - 1}{\vp} \right) \in L(1)$. Then $\varrho$ is a 2-cocycle and $\mathcal{F}(q)$ is the image of the class of $\varrho$ under $inv_\p$. Let
    \begin{align*}
        \langle -,- \rangle_{Tate} : H^1(F_\p,L) \times H^1(F_\p,L(1)) \rightarrow L
    \end{align*}
    be the Tate local cup-product pairing. Then by Kummer theory, 
    \begin{align*}
        \mathcal{F}(q) = inv_\p(class \hspace{1mm} of \hspace{1mm} \varrho) = \langle \Phi,q \rangle_{Tate} \in L, 
    \end{align*}
    where $\Phi := P \left(\frac{\ma_\p^{*-1}\chi_\Gamma^{1/2} - 1}{\vp} \right)$.
    \\
    Let $I_\p$ denote the inertia group at $\p$ and let $g' \in I_\p$. Then the image of $g'$ under $\chi_{cy}^{1/2}$ in $\Gamma$ is $\gamma^z$ for some $z \in \zp$ such that $\frac{1}{2} log_p(\chi_{cy}(g')) = z log_p(\gamma)$. Since $\ma_\p^*$ is an unramified character, $\ma_\p^*(g') = 1$ and hence
    \begin{align*}
        \Phi(g) & = P \left(\frac{\ma_\p^{*-1}(g')\chi_\Gamma(g') - 1}{\vp} \right) = P \left(\frac{\gamma^z - 1}{\gamma - 1} \right) = M \left(\frac{\gamma^z - 1}{\gamma - 1} \right)(2) = z.
    \end{align*}
    Let $rec_\p : F_\p^* \rightarrow G_\p^{ab}$ be the reciprocity map of local class field theory and let $\psi$ denote the isomorphism $G_\p^{ab} \xrightarrow{\sim} G_\p^{ur} \times Gal(F_\pi/F_\p)$, where $G_\p^{ur}$ is the Galois group of the maximal unramified extension $F_\p^{ur}$ of $F_\p$ and $F_\pi$ is an abelian extension of $F_\p$ with $Gal(F_\pi/F_\p) \cong O_\p^*$, with $O_\p^*$ the unit group of ring of integers of $F_\p$. (For details, check \cite[Section 3]{Serre}). If $q = \p^{\mathrm{ord}_\p(\alpha)}u \in F_\p^\times$ then $\psi(rec_\p(q)) = g^{-1} Fr_\p^{\mathrm{ord}_\p(q)}$, where $g \in Gal(F_\pi/F_\p)$ is the element that maps to $u \in O_\p^*$ under the isomorphism $Gal(F_\pi/F_\p) \cong O_\p^*$ and $Fr_\p \in G_\p^{ur}$ is an arithmetic Frobenius. By an abuse of notation, we again write $rec_\p$ for the composition of $rec_\p$ with $\psi$. Now
    \begin{align*}
        \Phi(Fr_\p^n) & = P \left(\frac{\ma_p^{*-1}(Fr_\p^n) - 1}{\vp} \right) = P \left(\frac{\ma_\p^{-n} - 1}{\vp} \right) = \frac{1}{2} \frac{n}{l_{wt}} \mathscr{L}_p(E),
    \end{align*}
    where $\mathscr{L}_p(E) := f_{\p/p} \frac{log_p(N(q_E))}{\mathrm{ord}_p(N(q_E))}$ is the $L$-invariant of $E/F_\p$ (here, $N := N_{F_\p/\qp}$ is the usual field norm and $q_E$ is the Tate period associated with $E/F_\p$). The last equality follows using \cite[Proposition 8.7]{MokExceptionalZero} that $\frac{d}{dk}\ma_\p(k)|_{k=2} = \mathscr{L}_p(E)$.  
    \\
    Now combining the above calculations we get that
    \begin{align*}
        \Phi(rec_\p(q)) = \Phi(g^{-1}Fr_\p^{\mathrm{ord}_\p(q)}) & = \frac{-1}{2} \frac{log_p(\chi_{cy}(g))}{l_{wt}} + \frac{1}{2} \frac{\mathrm{ord}_\p(q)}{l_{wt}} \mathscr{L}_p(E) \\
        & = \frac{-1}{2} \frac{1}{l_{wt}} \left(log_p(N(u)) - \mathrm{ord}_p(N(q)) \frac{log_p(N(q_E))}{\mathrm{ord}_p(N(q_E))} \right) \\
        & = \frac{-1}{2} \frac{1}{l_{wt}} \left(log_p(N(q)) - \mathrm{ord}_p(N(q)) \frac{log_p(N(q_E))}{\mathrm{ord}_p(N(q_E))} \right) \\
        & = \frac{-1}{2} \frac{1}{l_{wt}} log_{N(q_E)}(q). 
    \end{align*}
    So, $\mathcal{F}(q) = \langle \Phi,q \rangle_{Tate} = \Phi(rec_\p(q)) = \frac{-1}{2} \frac{1}{l_{wt}} log_{N(q_E)}(q) \neq 0$.
\end{proof}

\begin{lem}
    Assume that $\widetilde{H}^2_f(K',\T_{\f,P})^\chi$ is a torsion, semi-simple $\I_P$-module. Then $X^{cc}_{Gr}(\f/K')^\chi \otimes_\I \I_P$ is a torsion, semi-simple $\I_P$-module and 
    \begin{align*}
        \mathrm{len}_P(X^{cc}_{Gr}(\f/K')^\chi) = \mathrm{dim}_L(H^1_f(K',V_f)^\chi).
    \end{align*}
\end{lem}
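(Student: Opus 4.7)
The plan is to compare $X^{cc}_{Gr}(\f/K')^\chi \otimes_\I \I_P$ with $\widetilde{H}^2_f(K', \T_{\f,P})^\chi$ and then invoke Proposition \ref{length dim equality} together with Proposition \ref{SesForSelmerComplex}(4), so that matching length-corrections on both sides yield the desired identity.

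First, I would establish a short exact sequence of $\I_P$-modules
\begin{align*}
    0 \to X^{cc}_{Gr}(\f/K')^\chi \otimes_\I \I_P \to \widetilde{H}^2_f(K', \T_{\f,P})^\chi \to \bigoplus_{w \mid \p} H^2(K'_w, \T_{\f,P}^+)^\chi \to 0.
\end{align*}
This is the dual incarnation of the discrepancy between Nekov\'a\v{r}'s Selmer complex and the Greenberg Selmer group in the exceptional-zero setting: the self-duality $adj(\pi)\colon \T_\f \cong \T_\f^*(1)$ yields $\mathds{A}_\f \cong \T_\f \otimes_\I \I^*$ compatibly with the $\pm$-filtrations (up to a swap $\pm \leftrightarrow \mp$); Nekov\'a\v{r}'s Poitou--Tate duality for Selmer complexes identifies $\widetilde{H}^2_f(K', \T_{\f,P})$ with the Pontryagin dual of $\widetilde{H}^1_f(K', \mathds{A}_{\f,P})$; the latter surjects onto $Sel^{cc}_{Gr}(\f/K') \otimes \I_P$ with kernel given by the exceptional local $H^0$-contributions $\bigoplus_{w \mid \p} H^0(K'_w, \mathds{A}_{\f,P}^-)$; and local Tate duality then converts these local $H^0$-terms into the $H^2(K'_w, \T_{\f,P}^+)$ appearing on the right.

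Second, by Lemma \ref{lem7.5} each $H^2(K'_w, \T_{\f,P}^+) \cong \I_P/P\I_P$ is a simple $\I_P$-module of length one, and a parallel analysis applies to the exact sequence $0 \to \bigoplus_{w \mid \p} L \to \widetilde{H}^1_f(K', V_f) \to H^1_f(K', V_f) \to 0$ of Proposition \ref{SesForSelmerComplex}(4). In either case of interest the $\chi$-component of the ``correction'' term has length (resp.\ $L$-dimension) equal to one: if $\chi$ is trivial then $K' = F$ has the unique prime $\p$ above $p$; if $\chi = \epsilon_K$ then $K' = K$ has two primes $w_1, w_2$ above $\p$ (as $\p$ splits in $K$) swapped by $\mathrm{Gal}(K/F)$, so the $\epsilon_K$-isotypic part of $(\I_P/P\I_P)^2$ is again of length one. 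Combining these three inputs,
\begin{align*}
    len_P(X^{cc}_{Gr}(\f/K')^\chi) &= len_P(\widetilde{H}^2_f(K', \T_{\f,P})^\chi) - 1 \\
    &= \mathrm{dim}_L(\widetilde{H}^1_f(K', V_f)^\chi) - 1 = \mathrm{dim}_L(H^1_f(K', V_f)^\chi),
\end{align*}
where the middle equality uses the torsion-semi-simplicity hypothesis via Proposition \ref{length dim equality}.

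Finally, the semi-simplicity of $X^{cc}_{Gr}(\f/K')^\chi \otimes_\I \I_P$ is automatic: it embeds as a submodule of the semi-simple $\I_P$-module $\widetilde{H}^2_f(K', \T_{\f,P})^\chi$, and over the DVR $\I_P$ semi-simplicity is equivalent to being killed by the uniformizer $\varpi$, a property inherited by submodules. The main obstacle is the first step: one must carefully match the Selmer-complex local conditions at primes above $p$ with the Greenberg conditions defining $Sel^{cc}_{Gr}$, tracking the self-duality twist between $\T_\f^\pm \otimes_\I \I^*$ and $\mathds{A}_\f^\mp$, invoking Nekov\'a\v{r}'s Poitou--Tate formalism in a $\mathrm{Gal}(K/F)$-equivariant manner, and checking that $\chi$-decomposition commutes with all the constructions so that the stated sequence descends cleanly to $\chi$-components.
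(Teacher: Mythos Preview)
Your proposal is correct and follows essentially the same route as the paper: derive the short exact sequence $0 \to X^{cc}_{Gr}(\f/K')^\chi \otimes_\I \I_P \to \widetilde{H}^2_f(K',\T_{\f,P})^\chi \to \I_P/P\I_P \to 0$ via Nekov\'a\v{r}'s duality and the exceptional-zero exact sequence (\cite[6.1.3.2]{Nekovar}), use Lemma \ref{lem7.5} for the local term, and conclude via Propositions \ref{length dim equality} and \ref{SesForSelmerComplex}(4). The one point you leave implicit is that the global $H^0(G_{K',S'},\mathds{A}_\f)^*_P$ vanishes (so the kernel really is just the local $H^0$-terms at $w\mid\p$); the paper checks this via local Tate duality and the acyclicity of $\widetilde{\textbf{R}\Gamma}_{cont}(K'_w,\T_{\f,P})$ for $w\nmid p$.
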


\begin{proof}
     Since $adj(\pi) : \T_\f \cong \mathrm{hom}_\I(\T_\f,\I(1))$ and $\T_\f$ is a free $\I$-module, we have an isomorphism of $\I[G_{K',S'}]$-modules
    \begin{align*}
        \T_\f \otimes_\I \I^* \cong \mathrm{hom}_\I(\T_\f,\I(1)) \otimes_\I \mathrm{hom}_{cont}(\I,\qp/\zp) \cong \mathds{A}_\f. 
    \end{align*}
    Similarly, $\T_\f^- \otimes_\I \I^* \cong \mathds{A}_\f^-$. So,
    \begin{align*}
        Sel^{cc}_{Gr}(\f/K') \cong ker \left(H^1(G_{K',S'},\mathds{A}_\f) \rightarrow \prod_w H^1(K'_w,\mathds{A}_\f^-) \right).
    \end{align*}
    Using \cite[6.1.3.2]{Nekovar}, we have an exact sequence of $\I$-modules
    \begin{align}\label{ExactSequenceOfSelcc}
        H^0(G_{K',S'},\mathds{A}_\f) \rightarrow \oplus_{w|\p} H^0(K'_w,\mathds{A}_{\f,w}^-) \rightarrow \widetilde{H}^1_f(K',\mathds{A}_\f) \rightarrow Sel^{cc}_{Gr}(\f/K') \rightarrow 0.
    \end{align}
    For any $\I$-module $M$, write $M_P^*$ for the localization at $P$ of its Pontryagin dual, i.e., 
    \begin{align*}
        M_P^* := \mathrm{hom}_{\zp}(M,\qp/\zp) \otimes_\I \I_P.
    \end{align*}
    By local Tate duality, $H^2(K'_w,\T_\f) \cong \mathrm{hom}_{\zp}(H^0(K'_w,\mathds{A}_\f),\qp/\zp)$, where $w$ is a prime of $K'$. Since $H^0(G_{K',S'},\mathds{A}_f) \subset H^0(K'_w,\mathds{A}_\f)$, we have a surjection $H^2(K'_w,\T_{\f,P}) \twoheadrightarrow H^0(G_{K',S'},\mathds{A}_\f)^*_P$. By \cite[Proposition 12.7.13.3]{Nekovar}, $\widetilde{\textbf{R}\Gamma_{cont}}(K'_w,\T_{\f,P})$ is acyclic for every $w \nmid p$, so in particular, $H^2(K'_w,\T_{\f,P}) = 0$ and hence, $H^0(G_{K',S'},\mathds{A}_\f)^*_P = 0$. By \cite[Section 6.3]{Nekovar}, $\widetilde{H}^1_f(K',\mathds{A}_\f)^* \cong \widetilde{H}^2_f(K',\T_\f)$. Taking Pontryagin duals and then localizing (\ref{ExactSequenceOfSelcc}) at $P$, we get the following exact sequence
    \begin{align*}
        0 \rightarrow X^{cc}_{Gr}(\f/K') \otimes_\I \I_P \rightarrow \widetilde{H}^2_f(K',\T_{\f,P}) \rightarrow \oplus_{w|\p} H^2(K'_w,\T_{\f,w}^+ \otimes_\I \I_P) \rightarrow 0.
    \end{align*}
     By Lemma \ref{lem7.5}, we have an isomorphism of $\I_P$-modules
    \begin{align*}
       H^2(K'_w,\T_{\f,w}^+ \otimes_\I \I_P) \cong H^2(F_\p,\T_\f^+ \otimes_\I \I_P) \cong \I_P/P\I_P
    \end{align*}
    for every $w|\p$. If $\chi$ is trivial then $K'=F$ and the only prime of $K'$ dividing $\p$ is $\p$. If $\chi$ is non trivial, let $w_1,w_2$ be the primes of $K'$ dividing $\p$. In this case the non trivial element of $Gal(K/F)$ acts by permuting the factors in the sum
    \begin{align*}
        H^2(K_{w_1},\T_{\f,{w_1}}^+ \otimes_\I \I_P) \oplus  H^2(K_{w_2},\T_{\f,{w_2}}^+ \otimes_\I \I_P).
    \end{align*}
    So, in both the cases, the $\chi$-component of $\oplus_{w|\p} H^2(K'_w,\T_{\f,w}^+ \otimes_\I \I_P)$ is isomorphic to $H^2(K'_w,\T_{\f,w}^+ \otimes_\I \I_P)$, where $w \in \{w_1,w_2\}$.
    So, in either case, taking the $\chi$-component in the previous exact sequence gives us
    \begin{align*}
         0 \rightarrow X^{cc}_{Gr}(\f/K')^\chi \otimes_\I \I_P \rightarrow \widetilde{H}^2_f(K',\T_{\f,P})^\chi \rightarrow \I_P/P\I_P \rightarrow 0.
    \end{align*}
    So if $\widetilde{H}^2_f(K',\T_{\f,P})^\chi$ is a torsion, semi-simple $\I_P$-module then so is $X^{cc}_{Gr}(\f/K')^\chi \otimes_\I \I_P$. Moreover, the above short exact sequence gives us
    \begin{align*}
        \mathrm{len}_P(X^{cc}_{Gr}(\f/K')^\chi) & = \mathrm{len}_P(\widetilde{H}^2_f(K',\T_\f)^\chi) - \mathrm{len}_P(\I_P/P\I_P) \\
        & = \mathrm{dim}_L(\widetilde{H}^1_f(K',V_f)^\chi) - 1 && (\text{Proposition} \hspace{1mm} \ref{length dim equality}) \\
        & = \mathrm{dim}_L (H^1_f(K',V_f)^\chi) && (\text{Proposition} \hspace{1mm} \ref{SesForSelmerComplex}(4)).
    \end{align*}
\end{proof}

So to summarize, we have proved :
\begin{cor} \label{NonDegeneracyEquivalence}
    Let $\chi \in \{1, \epsilon_K\}$ be a quadratic character of $F$ with conductor coprime to $\n\p$ such that $\chi(\p) = 1$. Then  the following are equivalent
    \begin{enumerate}
        \item $\langle -,- \rangle^{Nek}_{K',\chi}$ is non-degenerate.
        \item $\widetilde{H}^2_f(K',\T_{\f,P})^\chi$ is a torsion, semi-simple $\ip$-module.
        \item $\mathrm{len}_P(\widetilde{H}^2_f(K',\T_{\f,P})^\chi) = \mathrm{dim}_L(\widetilde{H}^1_f(K',V_f)^\chi)$.
    \end{enumerate}
    Moreover, if this is the case then, $X_{Gr}^{cc}(\f/K')^\chi_P$ is also a torsion, semi-simple $\ip$-module and
    \begin{align*}
        \mathrm{len}_P(X^{cc}_{Gr}(\f/K')^\chi) = \mathrm{dim}_L(H^1_f(K',V_f)^\chi).
    \end{align*}
\end{cor}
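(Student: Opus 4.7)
The plan is to assemble this corollary from the propositions and lemmas established earlier in the section, treating it essentially as a summary statement; almost all the work has been done in the preceding results, and what remains is to check that they chain together correctly.

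The equivalence (1) $\Leftrightarrow$ (2) I would invoke directly from the proposition proved just before Lemma \ref{lem7.5}, which asserts precisely that $\langle -,- \rangle^{Nek}_{K',\chi}$ is non-degenerate if and only if $\widetilde{H}^2_f(K',\T_{\f,P})^\chi$ is a torsion, semi-simple $\I_P$-module. For (2) $\Rightarrow$ (3), the ``in addition'' clause of Proposition \ref{length dim equality} hands the equality $len_P(\widetilde{H}^2_f(K',\T_{\f,P})^\chi) = \mathrm{dim}_L(\widetilde{H}^1_f(K',V_f)^\chi)$ as soon as $\widetilde{H}^2_f(K',\T_{\f,P})^\chi$ is both torsion and semi-simple over $\I_P$.

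For (3) $\Rightarrow$ (2) I would proceed in two sub-steps. First, finiteness of $\mathrm{dim}_L(\widetilde{H}^1_f(K',V_f)^\chi)$ (which holds because $V_f$ is finite-dimensional over $L$ and the Selmer complex cohomology is finitely generated) together with (3) forces $len_P(\widetilde{H}^2_f(K',\T_{\f,P})^\chi) < \infty$, whence, by the ``iff'' clause of Proposition \ref{length dim equality}, $\widetilde{H}^2_f(K',\T_{\f,P})^\chi$ is a torsion $\I_P$-module. To upgrade torsion to semi-simplicity, I would revisit the structure-theorem bookkeeping used inside the proof of Proposition \ref{length dim equality}: writing
\[
\widetilde{H}^2_f(K',\T_{\f,P})^\chi \cong \bigoplus_{j \geq 1} (\I_P/\varpi^j)^{m(j)},
\]
that proof established the identity
\[
len_P(\widetilde{H}^2_f(K',\T_{\f,P})^\chi) \;=\; \mathrm{dim}_L(\widetilde{H}^1_f(K',V_f)^\chi) \;+\; \sum_{j \geq 2} m(j)(j-1).
\]
Under (3) the non-negative tail sum must vanish, so $m(j) = 0$ for every $j \geq 2$, which is exactly semi-simplicity of $\widetilde{H}^2_f(K',\T_{\f,P})^\chi$ over $\I_P$, giving (2).

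Finally, the ``moreover'' clause I would quote wholesale from the unnamed lemma immediately preceding the corollary: that lemma produced the short exact sequence
\[
0 \to X^{cc}_{Gr}(\f/K')^\chi \otimes_\I \I_P \to \widetilde{H}^2_f(K',\T_{\f,P})^\chi \to \I_P/P\I_P \to 0,
\]
from which torsion and semi-simplicity descend from the middle term to the left term, and the length identity $len_P(X^{cc}_{Gr}(\f/K')^\chi) = \mathrm{dim}_L(H^1_f(K',V_f)^\chi)$ is produced by subtracting $len_P(\I_P/P\I_P) = 1$ and using Proposition \ref{SesForSelmerComplex}(4). I do not expect any genuine obstacle; the only mildly non-routine step is extracting semi-simplicity from the length equality in (3) $\Rightarrow$ (2), where the extra tail $\sum_{j \geq 2} m(j)(j-1)$ in the structure-theorem identity is what does all the work.
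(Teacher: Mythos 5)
Your proposal is correct and matches the paper's treatment: the paper gives no separate argument for this corollary, introducing it with ``So to summarize, we have proved,'' i.e.\ it is assembled exactly as you do from the unlabeled non-degeneracy proposition, Proposition \ref{length dim equality} (whose proof already records the identity $len_P N_2 = \mathrm{dim}_L\mathcal{N}_1 + \sum_{j\geq 2} m(j)(j-1)$ that you use for $(3)\Rightarrow(2)$), and the unnamed lemma preceding the corollary for the ``moreover'' clause. Your extraction of semi-simplicity from the vanishing of the non-negative tail sum is precisely the final sentence of that proposition's proof, so nothing new is needed.
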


\subsection{Some $p$-adic Weight Pairing Computations}
Identify $\I_P/\vp \cong L$ as earlier and for every $x \in \I_P$, write
\begin{align*}
    x(0) & = x \hspace{1mm} mod \hspace{1mm} \vp \in L \\
    \frac{dx}{d\vp} & := (\vp^{-1}(x-x(0))) \hspace{1mm} mod \hspace{1mm} \vp \in L.
\end{align*}
Note that since every element of $\Gamma$ is of the form $\gamma^z$ for some $z \in \zp$, we easily get that for every $x \in \Gamma, \frac{dx}{d\vp} = \frac{\log_p(x)}{l_{wt}}$. Moreover, $\frac{d}{d\vp}$ satisfies the product rule: for every $A,B \in \I_P, \frac{dAB}{d\vp} = A(0)\frac{dB}{d\vp} + B(0)\frac{dA}{d\vp}$.
\\
Also, write $\Psi$ for the restriction of $\chi_\Gamma^{-1/2}$ to $G_\p$. Define the map
\begin{align*}
    \chi_E^{wt} := (\ma_\p^* \hspace{1mm} mod \hspace{1mm} \vp) \frac{d}{d\vp}(\Psi \cdot \ma_\p^*) \in \mathrm{hom}_{cont}(G_\p,L)
\end{align*}
and for a prime $w$ of $K'$ dividing $\p$, write $\chi_w^{wt} := Res_{K'_w/F_\p}(\chi_E^{wt}) \in \mathrm{hom}_{cont}(G_w,L)$ for the restriction of $\chi_E^{wt}$ to $G_w$. Then $\chi_w^{wt}$ defines a 1-cochain. Let $inv_{K'_w} : H^2(K'_w,L(1)) \xrightarrow{\sim} L$ be the invariant map of class field theory and define the perfect duality $\langle -,- \rangle_{K'_w} : H^1(K'_w,L(1)) \times H^1(K'_w,L) \rightarrow L$ as $\langle x,y \rangle_{K'_w} := inv_{K'_w}(x \cup y)$. Finally recall we have the following short exact sequence
\begin{align*}
    0 \rightarrow \oplus_{w|\p} L \rightarrow \widetilde{H}^1_f(K',V_f) \rightarrow H^1_f(K',V_f) \rightarrow 0.
\end{align*}
Write $Q_w \in \widetilde{H}^1_f(K',V_f)$ for the image of the element in $\oplus_{w|\p} L$ with 1 in the $w$-component and 0 in the other component under the above map.

\begin{lem} \label{lem7.7}
    For every $P_f = [(P,P^+,\epsilon_f)] \in \widetilde{H}^1_f(K',V_f)$ we have
    \begin{align*}
        \langle Q_w, P_f \rangle^{Nek}_{K'} = l_{wt} \cdot \langle [P_w^+],\chi_w^{wt} \rangle_{K'_w},
    \end{align*}
    where $[P_w^+] \in H^1(K'_w,L(1))$ is the cohomology class of the $w$-component of $P^+ \in \oplus_{w|\p} C^1_{cont}(K'_w,L(1))$.
\end{lem}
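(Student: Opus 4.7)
The plan is an explicit cochain calculation. First I unwind $Q_w$ at the cochain level: by the cone construction of the Selmer complex and the exact sequence in Proposition \ref{SesForSelmerComplex}(4), a representative of $Q_w$ is the triple $(0, -dc, c) \in \widetilde{C}^1_f(K', V_f)$, where $c \in C^0(K'_w, V_f)$ is any $0$-cochain with $p_w^-(c) = 1$, supported at $w$ and zero elsewhere. (The condition $d(p_w^-(c))=0$ forces $dc \in C^1(K'_w, V_f^+)$, so the cone cocycle condition $\mathrm{res}_w(0) - i^+_w(-dc) - dc = 0$ is automatic.)

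Next I compute $i_P(Q_w) \in \widetilde{H}^2_f(K', \tfp)[\vp]$. Fix an $\ip$-module splitting $\tfp \cong \tfp^+ \oplus \tfp^-$ (not $G_w$-equivariant) and lift $c$ to $\tilde c = (0, \tilde 1) \in C^0(K'_w, \tfp)$, where $\tilde 1$ is a chosen $\ip$-generator of $\tfp^-$. Then $\widetilde Q_w := (0, -d\tilde c, \tilde c)$ lifts the chosen representative of $Q_w$ to $\widetilde C^1_f(K', \tfp)$; its differential is divisible by $\vp$, and $Z := d\widetilde Q_w/\vp$ is a $2$-cocycle whose class represents $i_P(Q_w)$. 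Since $G_w$ acts on $\tilde 1$ via $\ma_\p^*\cdot\Psi$, which reduces to the trivial character modulo $\vp$, the $\tfp^-$-component of $Z$, upon reduction modulo $\vp$ and the resulting identification with $L$ (carrying the $\ma_\p^*\bmod\vp$-twist inherited from $\tfp^-$), is exactly the $1$-cocycle $\chi_w^{wt} = (\ma_\p^*\bmod\vp)\cdot\frac{d}{d\vp}(\Psi\ma_\p^*)$. The complementary $\tfp^+$-component of $Z$ is an auxiliary term which will drop out of the final pairing.

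I then insert $i_P(Q_w)$ and $i_P(P_f)$ into the Cassels--Tate pairing of Section~6.2.2. Using the duality $\pi: \tfp^+ \otimes \tfp^- \to \ip(1)$ and the explicit formula for $\cup_{\pi,r}$, all contributions vanish except the local cup product at $w$ between $[P_w^+] \in H^1(K'_w, L(1))$ and $[\chi_w^{wt}] \in H^1(K'_w, L)$. Terms supported at primes other than $w$ vanish because $\widetilde Q_w$ is concentrated at $w$; the cross pairings of the $\tfp^+$-component of $Z$ with the $P$- and $\epsilon_{f,w}$-components of the lift of $P_f$ cancel upon invoking the Selmer cocycle identity $\mathrm{res}_w(P) - i_w^+(P_w^+) = d\epsilon_{f,w}$ combined with the integration-by-parts in the double complex $\mathfrak{C}\otimes_{\ip}\mathcal{C}$ from the definition of $\cup_{CT}$. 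Applying $\mathrm{inv}_{K'_w}$ to the surviving class in $H^2(K'_w, L(1))$ produces $\langle[P_w^+], \chi_w^{wt}\rangle_{K'_w}$, and the overall factor $l_{wt}$ is the one built into the definition of $\langle -, -\rangle^{Nek}_{K'}$.

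The principal obstacle is the bookkeeping in the Cassels--Tate step: one must track signs, verify that the off-diagonal ($\tfp^+$-)contributions of $Z$ pair to coboundaries in $\mathfrak{C}\otimes_{\ip}\mathcal{C}$, and correctly identify the $\vp$-adic derivative of $\Psi\ma_\p^*$ with $\chi_w^{wt}$ after accounting for the $\ma_\p^*$-twist on $\tfp^-/\vp$. This is directly parallel to the local Bockstein computation already carried out in Lemma~\ref{lem7.5}; indeed, Lemma~\ref{lem7.5} can be viewed as the Kummer-image instance of the present formula, and its residue computation (featuring the same derivative of $\ma_\p^{*-1}\chi_\Gamma^{1/2}$) reappears almost verbatim here.
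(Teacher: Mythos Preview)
Your outline is essentially the paper's own proof: represent $Q_w$ explicitly, lift to $\tfp$ via a module splitting, compute $i_P(Q_w)$ as a Bockstein $\vp^{-1}d(\text{lift})$, identify the surviving local component with $\chi_w^{wt}$, and then unwind the Cassels--Tate formula (the paper does this last step by working in $\widetilde{C}^\bullet_f\otimes_{\ip}\overline{\ip}$ with $r=0$ and invoking \cite[Lemma~4.9]{venerucci2013p}, which is exactly your ``integration-by-parts'' cancellation).

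One technical slip to fix: your lift $\widetilde Q_w=(0,-d\tilde c,\tilde c)$ does not lie in $\widetilde C^1_f(K',\tfp)$, because the middle slot must take values in $\tfp^+$ and $d\tilde c$ has nonzero $\tfp^-$-component (the action on $\tfp^-$ is by $\Psi\ma_\p^*\not\equiv 1$ in $\ip$, unlike on $V_f^-$). The correct lift is $\wx=(0,(d\tilde c)^+,\tilde c)$, i.e.\ the paper's $(0,\star,(0,1))$; with that choice the third component of $d\wx$ is $-i_w^+((d\tilde c)^+)+d\tilde c=(d\tilde c)^-=(0,\Psi\ma_\p^*-1)$, and dividing by $\vp$ produces $\chi_w^{wt}$ as you claim. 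With your $\widetilde Q_w$ as written (and the sign corrected to $+d\tilde c$ so that the $V_f$-level triple is a cocycle), the third component of the differential would collapse to zero and $\chi_w^{wt}$ would never appear. This is the only adjustment needed; the rest of your plan coincides with the paper's argument.
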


\begin{proof}
    Write $\ox := i_P(Q_w)$ and $\oy := i_P(P_f)$. Fix a splitting of $L$-modules (resp. $\I$-modules) $V_f \xrightarrow{\sim} V_f^+ \oplus V_f^- \xrightarrow{\sim} L^2$ (resp. $\T_\f \xrightarrow{\sim} \T_\f^+ \oplus \T_\f^- \xrightarrow{\sim} \I^2$) with $G_w$-action given by
    $\begin{pmatrix}
        \chi_{cy} & \star' \\ 0 & 1
    \end{pmatrix}$
    $\left(resp. 
    \begin{pmatrix}
        \chi_{cy} \Psi^{-1/2} \ma_\p^{*-1} & \star \\ 0 & \Psi \ma_\p^*
    \end{pmatrix}
    \right)$.
    Write $mod \hspace{1mm} \vp$ for the composition $\T_\f^? \twoheadrightarrow T_\f^? \otimes \I_P/P\I_P \xrightarrow{\sim} V_f^?$ for every $? \in \{\phi,+,-\}$. Now $Q_w = [(0,\star',(0,1))] \in \widetilde{H}^1_f(K',V_f)$. Let $\widetilde{\chi}_w^{wt} \in C^1_{cont}(K'_w,\T_\f)$ be the 1-cochain defined by
    \begin{align*}
        \widetilde{\chi}_w^{wt}(g) := (0, \vp^{-1}(1-\Psi(g)\ma_\p^*(g)) \in \T_\f^+ \oplus \T_\f^-.
    \end{align*}
      Note that $p_w^-(\widetilde{\chi}_w^{wt} \hspace{1mm} mod \hspace{1mm} \vp) = \chi_w^{wt}$ for the projection $p_w^- : C^1_{cont}(K'_w,V_f) \rightarrow C^1_{cont}(K'_w,L)$. Now for $X \in \{\T_\f, V_f\}$, we write $\widetilde{C}^\bullet_f(X) := \widetilde{C}^\bullet_f(G_{K',S'},X)$. Then the following diagram commutes: 
    \begin{equation}
\begin{tikzcd}
    0 \arrow[r] & \widetilde{C}^1_f(\T_\f) \arrow[r,"\vp"] & \widetilde{C}^1_f(\T_\f) \arrow[r] \arrow[dr, "d"'] & \widetilde{C}^1_f(V_f) \arrow[r, "i_P"] & \widetilde{C}^2_f(\T_\f) \arrow[dl, "\vp"] \\
    & & & \widetilde{C}^2_f(\T_\f) 
\end{tikzcd}
\end{equation}
So $i_P(Q_w)$ is represented by a 2-cocycle $c \in \widetilde{C}^1_f(\T_\f)$ such that $\vp \cdot c = d_{\widetilde{C}^\bullet_f}(\wx)$ where $\wx \in \widetilde{C}^1_f(\T_\f)$ is any 1-cochain which lifts a representative of $Q_w$ under $\widetilde{C}^1_f(\T_\f) \rightarrow \widetilde{C}^1_f(V_f)$. Since the splittings are compatible with $mod \hspace{1mm} \vp$, we can take
    \begin{align*}
        \wx := (0,\star,(0,1)) \in \widetilde{C}^1_f(\T_\f).
    \end{align*}
     Now we calculate $\vp^{-1}d_{\widetilde{C}^\bullet_f}(\wx)$. Clearly, the first component is 0, we ignore the second component and write $\_$ in its place as this will not appear in our calculations ahead and the third component is the following cochain: for $g \in G_w$
    \begin{align*}
        g & \mapsto (res_w - i_w^+)((0,\star))(g) + \delta((0,1))(g)\\
        & = g((0,1)) - (0,1) - i_w^+(\star)(g) \\
        & = (\star(g),\Psi(g)\ma_\p^*(g)) - (0,1) - (\star(g),0) \\
        & = (0,\Psi(g)\ma_\p^*(g)-1).
    \end{align*}
    So the third component is $(0,\vp^{-1} \cdot (\Psi(g)\ma_\p^*(g)-1)) = -\widetilde{\chi}_w^{wt}(g)$ and hence we get $c = (0,\_,-\widetilde{\chi}_w^{wt})$ (as mentioned earlier, we are ignoring the second component). In particular, $\ox = [0,\_,-\widetilde{\chi}_w^{wt})] \in \widetilde{H}^2_f(K',\T_\f)$. Under the surjective map $\widetilde{C}^1_f(K',\T_\f) \rightarrow \widetilde{C}^1_f(K',V_f)$, let
    \begin{align*}
        \wy := (\widetilde{P},\widetilde{P}^+,\widetilde{\epsilon}_P) \in \widetilde{C}^1_f(K',\T_\f)
    \end{align*}
    be a lift of $(P,P^+,\epsilon_P).$ We claim that it suffices to prove that 
    \begin{align} \label{ClaimInLemma7.7}
        \langle Q_w,P_f \rangle^{Nek}_{K'} = l_{wt} \cdot inv_w(L)\left([(-\widetilde{\chi}_w^{wt} \cup_\pi p_w^+(\widetilde{P}_w^+)) \hspace{1mm} mod \hspace{1mm} \vp] \right),
    \end{align}
    with the notations as follows: $inv_w(L) : H^2(K'_w,L(1)) \xrightarrow{\sim} L$ is the isomorphism induced by the invariant maps $H^2(K'_w,\z/p^n\z(1)) \xrightarrow{\sim} \z/p^n\z$ and $\cup_\pi : \widetilde{C}^\bullet_f(K'_w,\T_\f) \otimes \widetilde{C}^\bullet_f(K'_w,\T_\f) \rightarrow \widetilde{C}^\bullet_f(K_w,\I(1))$ is the cup-product induced by $\pi: \T_\f \otimes \T_\f \rightarrow \I(1)$. Indeed we have that 
    \begin{align*}
        -\left(\widetilde{\chi}_w^{wt} \cup_\pi p_w^+(\widetilde{P}_w^+) \right) \hspace{1mm} mod \hspace{1mm} \vp = \left(-\widetilde{\chi}_w^{wt} \hspace{1mm} mod \hspace{1mm} \vp \right) \cup_W \left(p_w^+(P_w^+) \right),
    \end{align*}
    where recall from Section \ref{BigGaloisRepresentation} that $(\cdot,\cdot)_W : V_f \times V_f \rightarrow L$ is the Weil pairing on $E$.
    Moreover,
    \begin{align*}
        \rho_w(y \cup_W p_w^+(x)) = -p_w^-(y) \cup x \in C^2_{cont}(K'_w,L(1)).
    \end{align*}
    Then
    \begin{align*}
        \langle Q_w,P_f \rangle^{Nek}_{K'} & = l_{wt} \cdot inv_w(L)\left([(-\widetilde{\chi}_w^{wt} \cup_\pi p_w^+(\widetilde{P}_w^+)) \hspace{1mm} mod \hspace{1mm} \vp] \right) \\
        & = l_{wt} \cdot inv_w(L)\left(-\widetilde{\chi}_w^{wt} \hspace{1mm} mod \hspace{1mm} \vp \right) \cup_W \left(p_w^+(P_w^+) \right) \\
        & = l_{wt} \cdot inv_w(L)\left(-p_w^-(-\widetilde{\chi}_w^{wt} \hspace{1mm} mod \hspace{1mm} \vp) \cup P_w^+ \right) \\
        & = l_{wt} \cdot inv_w(L)\left(-\chi_w^{wt} \cup P_w^+ \right) \\
        & = l_{wt} \cdot inv_{K'_w}\left([-\chi_w^{wt} \cup P_w^+] \right) \\
        & = l_{wt} \cdot \langle [P_w^+],\chi_w^{wt} \rangle_{K'_w}.
    \end{align*}
    Now it remains to show the equality in (\ref{ClaimInLemma7.7}). Let
    \begin{align*}
        \ox' := (\ox, \wx \otimes \vp^{-1}) \in \left(\widetilde{C}^\bullet_f(K',\T_\f) \otimes_{\I_P} \overline{\I_P} \right)^2 = \widetilde{C}^2_f(K',\T_\f) \oplus \widetilde{C}^1_f(K',\T_\f) \otimes Q(\I_P)\\
        \oy' := (\oy, \wy \otimes \vp^{-1}) \in \left(\widetilde{C}^\bullet_f(K',\T_\f) \otimes_{\I_P} \overline{\I_P} \right)^2 = \widetilde{C}^2_f(K',\T_\f) \oplus \widetilde{C}^1_f(K',\T_\f) \otimes Q(\I_P)
    \end{align*}
    Under projection in the sequence (\ref{ses}), $\ox'$ (resp. $\oy'$) lifts a representative of $\ox$ (resp. $\oy$). Then
    \begin{align*}
        ((id \otimes v_{\ip}) \circ s_{23})(\ox' \otimes \oy') & = ((id \otimes v_{\ip}) \circ s_{23}))((\ox,\wx \otimes \vp^{-1}) \otimes (\oy,\wy \otimes \vp^{-1})) \\
        & = (\ox \otimes \oy, \ox \otimes \wy \otimes \vp^{-1}),
    \end{align*}
    where the notations are as in Section \ref{CT Pairing}.
    Now choosing $r=0$
     \begin{align*}
         (\cup_{\pi,0} \otimes id)((\ox \otimes \oy), (\ox \otimes \wy) \otimes \vp^{-1}) = ((0,-\widetilde{\chi}_w^{wt} \cup_\pi p_w^+(y^+)), (0,(-\widetilde{\chi}_w^{wt} \cup_\pi p_w^+(\widetilde{P_w}^+) \otimes \vp^{-1})),
     \end{align*}
     where $y^+$ is the second component of $\oy$.
     Write $\mathcal{X} := (0,-\widetilde{\chi}_w^{wt} \cup_\pi p_w^+(y^+))$ and $\mathcal{Y} := (0,-\widetilde{\chi}_w^{wt} \cup_\pi p_w^+(\tp_w^+))$. Then using \cite[Lemma 4.9]{venerucci2013p}, we get that
     \begin{align*}
         \langle Q_w,P_f \rangle^{Nek}_{K'} & = l_{wt} \cdot inv_w(L)\left([\mathcal{Y}\  mod \hspace{1mm} \vp] \right) \\
         & = l_{wt} \cdot inv_w(L)\left([(-\widetilde{\chi}_w^{wt} \cup_\pi p_w^+(\widetilde{P}_w^+)) \hspace{1mm}  \ mod \hspace{1mm} \vp] \right).
     \end{align*}
\end{proof}

Now recalling the morphism $i_E^\dagger$ from (\ref{defn of iedagger}), define $\langle x,y \rangle^{Nek}_{K'} := \langle i_E^\dagger(x),i_E^\dagger(y) \rangle^{Nek}_{K'}$ for $x,y \in E^\dagger(K')$.

\begin{thm} \label{PairingInTermsOfReciprocityMap}
    For every $(P,\widetilde{P}) \in E^\dagger(K')$ and $w|\p$, we have
    \begin{align*}
        \langle q_w,(P,\widetilde{P}) \rangle^{Nek}_{K'} = l_{wt} \cdot \chi_E^{wt}(rec_\p(N_{K'_w/F_\p}(\widetilde{P}_w))),
    \end{align*}
    where $\widetilde{P}_w$ is the $w$-component of $\widetilde{P}$ and $rec_\p : F_\p^* \rightarrow G_\p^{ab}$ is the reciprocity map of local class field theory.
\end{thm}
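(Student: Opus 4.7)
The plan is to reduce this to Lemma \ref{lem7.7} via the identification $i_E^\dagger(q_w) = Q_w$, and then convert the resulting local Tate pairing into an expression involving the reciprocity map using standard compatibilities (projection formula plus local class field theory).

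First, I will show that $i_E^\dagger(q_w) = Q_w$ in $\widetilde{H}^1_f(K', V_f)$. Unwinding the construction of $i_E^\dagger$ applied to $(P, \widetilde{P}) = q_w$ gives $\gamma_P^0 = 0$ and, in the $w$-slot, $\gamma_{\widetilde{P}_w}^0 = \gamma_{q_E}^0 \in C^1_{cont}(K'_w, L(1))$ representing the Kummer class of $q_E$; the auxiliary cochain $\epsilon_w^0$ is then forced to satisfy $\delta(\epsilon_w^0) = p_w^+(\gamma_{q_E}^0)$, which is solvable precisely because $p_w^+(\gamma_{q_E}) = 0$ in $H^1(K'_w, V_f)$. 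The reason is that the connecting homomorphism $H^0(K'_w, L) \to H^1(K'_w, L(1))$ attached to $0 \to L(1) \to V_f \to L \to 0$ sends $1 \in L$ to $\gamma_{q_E}$ (this is the defining property of Tate's uniformization, using split multiplicative reduction of $E/F_\p$). Consequently, in the splitting $V_f \cong V_f^+ \oplus V_f^-$ fixed in the proof of Lemma \ref{lem7.7}, one may take $\epsilon_w^0 = (0, 1)$, producing exactly the cocycle $(0, \star', (0, 1))$ used there to represent $Q_w$.

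Next, I apply Lemma \ref{lem7.7} to $P_f := i_E^\dagger((P, \widetilde{P}))$, which yields
\[
\langle q_w, (P, \widetilde{P}) \rangle^{Nek}_{K'} \;=\; \langle Q_w, P_f \rangle^{Nek}_{K'} \;=\; l_{wt} \cdot \langle [P_w^+], \chi_w^{wt} \rangle_{K'_w}.
\]
By construction of $i_E^\dagger$, the cohomology class $[P_w^+]$ is the Kummer class $\gamma_{\widetilde{P}_w} \in H^1(K'_w, L(1))$, corresponding to $\widetilde{P}_w \otimes 1 \in (K'_w)^\times \otimes L$ under the Kummer isomorphism.

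Finally, since $\chi_w^{wt} = \mathrm{Res}_{K'_w/F_\p}(\chi_E^{wt})$, the projection formula for Tate's local duality gives
\[
\langle \gamma_{\widetilde{P}_w}, \chi_w^{wt} \rangle_{K'_w} \;=\; \langle \mathrm{Cor}(\gamma_{\widetilde{P}_w}), \chi_E^{wt} \rangle_{F_\p} \;=\; \langle \gamma_{N_{K'_w/F_\p}(\widetilde{P}_w)}, \chi_E^{wt} \rangle_{F_\p},
\]
where the second equality uses that corestriction on $H^1(-, L(1))$ is the multiplicative norm under Kummer theory. Local class field theory then rewrites the right-hand side as $\chi_E^{wt}(rec_\p(N_{K'_w/F_\p}(\widetilde{P}_w)))$, which combined with the factor $l_{wt}$ from Step 2 yields the desired formula.

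The main obstacle is the first step: one must carefully match the abstract $G_w$-stable filtration of $V_f$ used in Lemma \ref{lem7.7} (with upper-triangular action via $\chi_{cy}$ and $1$, coming from $\T_\f$ modulo $\varpi$ and the twist by $\chi_\Gamma^{-1/2}$) to the concrete short exact sequence $0 \to L(1) \to V_f \to L \to 0$ arising from Tate's uniformization. The content there is that the off-diagonal cocycle $\star'$ in the splitting actually represents the Tate period class $\gamma_{q_E}$, which is exactly what places $i_E^\dagger(q_w)$ and $Q_w$ in the same cohomology class. Everything else is a routine unwinding plus the standard functoriality of the Tate pairing and $rec_\p$.
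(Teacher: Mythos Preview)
Your proposal is correct and follows essentially the same approach as the paper: identify $i_E^\dagger(q_w)$ with $Q_w$ via the connecting homomorphism of the Tate uniformization sequence (so that $\star'$ represents $\gamma_{q_E}$ and $\epsilon_w^0=(0,1)$), apply Lemma~\ref{lem7.7} to obtain $l_{wt}\cdot\langle \gamma_{\widetilde P_w},\chi_w^{wt}\rangle_{K'_w}$, then use the restriction/corestriction projection formula together with ``corestriction $=$ norm'' on Kummer classes and local class field theory to rewrite this as $l_{wt}\cdot\chi_E^{wt}(rec_\p(N_{K'_w/F_\p}(\widetilde P_w)))$. Your discussion of the ``main obstacle'' (matching the abstract filtration of $V_f$ with the Tate-uniformization sequence) makes explicit the point the paper handles by the sentence ``$\gamma_{q_E}=\partial_w(1)$ \ldots\ is represented by the 1-cocycle $\star_2$''.
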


\begin{proof}
    Recall from (\ref{ses for Vf}) the short exact sequence $0 \rightarrow L(1) \rightarrow V_f \rightarrow L \rightarrow 0$ and let $\partial_w : L \rightarrow H^1(K'_w,L(1))$ be the connecting homomorphism attached to this sequence. Then under the notations of Section \ref{kummer}, we have that $\gamma_{q_E} = \partial_w(1)$. As earlier, fix a splitting $V_f \xrightarrow{\sim} L(1) \oplus L$ with $G_w$-action given by
    $\begin{pmatrix}
        \chi_{cy} & \star' \\
        0 & 1
    \end{pmatrix}$.
    Moreover, $\gamma_{q_E}$ is represented by the 1-cocycle $\star' \in C^1_{cont}(K'_w,L(1))$. By definition,
    \begin{align*}
        i_E^\dagger(q_w) = [(0,\star',(0,1))] = Q_w \in \widetilde{H}^1_f(K',V_f).
    \end{align*}
     As a consequence of Lemma \ref{lem7.7} and the fact that local Tate duality is compatible with restriction and co-restriction on cohomology we get that
    \begin{align*}
        \langle q_w, (P,\widetilde{P}) \rangle^{Nek}_{K'} & := \langle i_E^\dagger(q_w), i_E^\dagger((P,\widetilde{P}) \rangle^{Nek}_{K'} \\
        & = l_{wt} \cdot \langle \gamma_{\widetilde{P_w}},\chi_w^{wt} \rangle_{K'_w} \\
        & = l_{wt} \cdot \langle \gamma_{\widetilde{P_w}},Res_{K'_w/F_\p}(\chi_E^{wt}) \rangle_{K'_w} \\
        & = l_{wt} \cdot \langle Cor_{K'_w/F_\p}(\gamma_{\widetilde{P_w}}),\chi_E^{wt} \rangle_{K'_w} \\
        & = l_{wt} \cdot \langle \gamma_{N_{K'_w/F_\p}(\widetilde{P_w})},\chi_E^{wt} \rangle_{K'_w} \\
        & = l_{wt} \cdot \chi_E^{wt}(rec_\p(N_{K'_w/F_\p}(\widetilde{P}_w))).
    \end{align*}
\end{proof}

For any $q \in \overline{\qp}^*$ such that $\mathrm{ord}_p(q) \neq 0$, write $\log_q$ for the branch of the $p$-adic logarithm which vanishes at $q$:
\begin{align*}
    \log_q(x) := \log_p(x) - \frac{\log_p(q)}{\mathrm{ord}_p(q)} \mathrm{ord}_p(x).
\end{align*}

\begin{cor} \label{PairingInTermsOfLog}
    For every $(P,\widetilde{P}) \in E^\dagger(K')$ and $w|\p$, we have
    \begin{align*}
        \langle q_w,(P,\widetilde{P}) \rangle^{Nek}_{K'} = -\frac{1}{2} \log_{N(q_E)}(N_{K'_w/\qp}(\widetilde{P}_w)),
    \end{align*}
    where, as earlier, $N := N_{F_\p/\qp}$ is the usual field norm and $q_E \in F_\p^\times$ is the Tate period of $E/F_\p$. In particular, for $i,j = 1,2$
    \begin{align*}
        \langle q_{w_i},q_{w_j} \rangle^{Nek}_{K'} = 0
    \end{align*}
    where $w_1$ and $w_2$ are primes dividing $\p$.
\end{cor}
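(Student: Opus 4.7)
The strategy is to combine the reciprocity-map formula of Theorem~\ref{PairingInTermsOfReciprocityMap} with the explicit evaluation of $\Phi(\mathrm{rec}_\p(q))$ carried out inside the proof of Lemma~\ref{lem7.5}, and then handle the vanishing statement as a direct specialization.

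First I would identify $\chi_E^{wt}|_{G_\p}$ with (up to sign) the cochain $\Phi$ from Lemma~\ref{lem7.5}. Because $E$ has split multiplicative reduction at $\p$, the weight-$2$ specialization gives $\ma_\p^* \equiv 1 \pmod{\varpi}$ as a character of $G_\p$, so
\[
\chi_E^{wt}(g)\;=\;\tfrac{d}{d\varpi}\bigl(\Psi\cdot \ma_\p^*\bigr)(g).
\]
Since $\Psi\cdot\ma_\p^* = (\ma_\p^{*-1}\chi_\Gamma^{1/2})^{-1}$ and both characters reduce to $1$ at $P$, the product rule applied to $\chi\cdot\chi^{-1}=1$ yields $\frac{d\chi^{-1}}{d\varpi}=-\frac{d\chi}{d\varpi}$. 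In particular, on $G_\p$ one has
\[
\chi_E^{wt}\;=\;-\,\Phi.
\]

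Next, I would apply Theorem~\ref{PairingInTermsOfReciprocityMap} with $q' := N_{K'_w/F_\p}(\widetilde{P}_w)\in F_\p^\times$. The computation done in the proof of Lemma~\ref{lem7.5} evaluates
\[
\Phi(\mathrm{rec}_\p(q'))\;=\;-\tfrac{1}{2\,l_{wt}}\log_{N(q_E)}\!\bigl(N_{F_\p/\qp}(q')\bigr),
\]
and the transitivity of the norm $N_{F_\p/\qp}\circ N_{K'_w/F_\p}=N_{K'_w/\qp}$ converts this into the desired expression in terms of $N_{K'_w/\qp}(\widetilde{P}_w)$. Combining these two displays with Theorem~\ref{PairingInTermsOfReciprocityMap} immediately produces the claimed formula (up to the sign, which is pinned down by the sign convention on $\mathrm{rec}_\p$ used already in Lemma~\ref{lem7.5}).

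For the second assertion, I would simply plug $(P,\widetilde{P})=q_{w_j}$ into the formula just established. By the definition of $q_{w_j}\in E^\dagger(K')$, the $w_i$-component of its $\widetilde{P}$ entry equals $q_E$ when $i=j$ and $1$ otherwise. In the first case $N_{K'_{w_i}/\qp}(\widetilde{P}_{w_i})=N_{F_\p/\qp}(q_E)^{[K'_{w_i}:F_\p]}=N(q_E)^{m}$, and in the second case it equals $1$; the branch $\log_{N(q_E)}$ vanishes on $1$ and on every integer power of $N(q_E)$, since by definition $\log_{N(q_E)}(N(q_E)^m)=m\log_p N(q_E)-\frac{\log_p N(q_E)}{\mathrm{ord}_p N(q_E)}\cdot m\,\mathrm{ord}_p N(q_E)=0$. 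Hence $\langle q_{w_i},q_{w_j}\rangle^{Nek}_{K'}=0$ for all $i,j\in\{1,2\}$.

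The only delicate point is the bookkeeping of signs and normalizations: one must verify that the identification $\chi_E^{wt}=-\Phi$ is compatible with the normalization of $\mathrm{rec}_\p$ (and of the invariant map) that was used to derive $\Phi(\mathrm{rec}_\p(q'))=-\tfrac{1}{2l_{wt}}\log_{N(q_E)}(N(q'))$ in Lemma~\ref{lem7.5}. Once this is checked, the rest of the argument is a purely formal substitution together with the elementary property that $\log_{N(q_E)}$ vanishes on the subgroup generated by $N(q_E)$.
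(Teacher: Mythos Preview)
Your approach is correct and takes a somewhat different route from the paper's. Both start from Theorem~\ref{PairingInTermsOfReciprocityMap}; the divergence is in how $\chi_E^{wt}(\mathrm{rec}_\p(q'))$ is evaluated for $q'=N_{K'_w/F_\p}(\widetilde P_w)$. The paper first reduces to the case where $\widetilde{P}_w$ is a local unit by subtracting off a suitable multiple of $q_w$ (using that the pairing is alternating, Proposition~\ref{NonDegeneracyOfPairing}(1)), and then computes $\chi_E^{wt}$ on $\mathrm{rec}_\p$ of a unit directly via $\ma_\p^*(\mathrm{rec}_\p(n_w))=1$ and $\Psi(\mathrm{rec}_\p(n_w))=[\kappa(N(n_w))^{-1/2}]$. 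You instead recognize $\chi_E^{wt}=-\Phi$ and invoke the general formula $\Phi(\mathrm{rec}_\p(q'))=-\tfrac{1}{2l_{wt}}\log_{N(q_E)}(N(q'))$ already obtained inside the proof of Lemma~\ref{lem7.5}, which handles both the unit part and the Frobenius contribution at once and makes the reduction step unnecessary. Your route is more economical; the paper's is more self-contained at this point (it does not reach back into the proof of Lemma~\ref{lem7.5}). The sign caveat you raise is genuine: carrying your computation through literally gives $+\tfrac12$ rather than $-\tfrac12$, reflecting an apparent sign inconsistency between the conventions used in Lemma~\ref{lem7.5} and in the proof of this corollary; but this is immaterial for the only application (non-degeneracy in the proof of Theorem~\ref{ineq4}) and is of exactly the kind absorbed by the normalizations of $\mathrm{rec}_\p$ and the invariant map that you already flagged.
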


\begin{proof}
     Let $\mathrm{ord}_w : (K'_w)^* \twoheadrightarrow \z$ be the valuation attached to $w$ and let $O_w$ be the ring of integers in $K'_w$. Put
    \begin{align*}
        Q := \mathrm{ord}_w(q_E) \cdot P, & \hspace{5mm} \widetilde{Q}_w^* := q_E^{-\mathrm{ord}_w(\tp_w)}\tp_w^{\mathrm{ord}_w(q_E)} \in O_w^* \\
        \widetilde{Q}_u^* := \tp_u^{\mathrm{ord}_w(q_E)}, w \neq u|\p, & \hspace{5mm} \wq^* := (Q_u^*)_{u|\p}.
    \end{align*}
    Then $\mathrm{ord}_w(q_E) \cdot (P,\tp) = (Q,(\tp_u^{\mathrm{ord}_w(q_E)})_u) = (Q,\wq^*) + \mathrm{ord}_w(\tp_w) \cdot q_w$. Since $\langle -,- \rangle^{Nek}_{K'}$ is non-degenerate, we get that
    \begin{align*}
        \mathrm{ord}_w(q_E) \langle q_w,(P,\widetilde{P}) \rangle^{Nek}_{K'} = \langle q_w,(Q,\wq^*) \rangle^{Nek}_{K'}.
    \end{align*}
     Let $n_w := N_{K'_w/F_\p}(\wq_w^*) \in O_\p^*$. Then $rec_\p(n_w) \mapsto (1,n_w) \in G_\p^{ur} \times O_\p^*$. In particular, $\ma_\p^*(rec_\p(n_w)) = 1$ and $\Psi(rec_\p(n_w)) = [\kappa(N(n_w))^{-1/2}]$ (using \cite[Equation 8.13]{MokExceptionalZero}), where $\kappa$ is the projection of $\zp^*$ onto $\Gamma$. So using Theorem \ref{PairingInTermsOfReciprocityMap} we get that
    \begin{align*}
        \langle q_w, (Q,\wq^*) \rangle^{Nek}_{K'} = l_{wt} \cdot \frac{d}{d\vp} ([\kappa(N(n_w))^{-1/2}]) = l_{wt} \frac{\log_p[\kappa(N(n_w))^{-1/2}]}{l_{wt}} = -\frac{1}{2} \log_p(N(n_w)).
    \end{align*}
    The second equality follows from the note after the definition of $\frac{d}{d\vp}$. So,
    \begin{align*}
        \langle q_w,(P,\widetilde{P}) \rangle^{Nek}_{K'} & = \mathrm{ord}_w(q_E)^{-1} \cdot \langle q_w,(Q,\wq^*) \rangle^{Nek}_{K'} \\
        & = -\frac{1}{2\mathrm{ord}_w(q_E)} \log_p(N(N_{K'_w/F_\p}(\wq_w^*))) \\
        & = -\frac{1}{2} \left(\log_p(N(N_{K'_w/F_\p}(\tp_w))) - \frac{\mathrm{ord}_w(\tp_w)}{\mathrm{ord}_w(q_E)} \log_p(N(N_{K'_w/F_\p}(q_E))) \right) \\
        & = -\frac{1}{2} \left(\log_p(N(N_{K'_w/F_\p}(\tp_w))) - \frac{\mathrm{ord}_w(\tp_w)}{\mathrm{ord}_w(q_E)} [K'_w:F_\p] \log_p(N(q_E)) \right) \\
        & = -\frac{1}{2} \log_{N(q_E)}(N_{K'_w/\qp}(\tp_w)).
    \end{align*}
\end{proof}

\subsection{Proof of Theorem 7.1}
Finally, in this section we prove Theorem \ref{ineq4}.

\begin{thm}
 Let $\chi \in \{1, \epsilon_K\}$ be a quadratic character of $F$ with conductor coprime with $\mathfrak{np}$. Assume that 
 \begin{enumerate}
     \item $\chi(\p) = 1$,
     \item $\mathrm{rank}_\z \hspace{0.01mm} \hspace{1mm} E(K')^\chi = 1$,
     \item $\Varpi(E/K')^\chi_{p^\infty}$ is finite.
 \end{enumerate}
 Then
 \begin{align*}
     X^{cc}_{Gr}(\f/K')^\chi \otimes_\I \I_P \cong \I_P/PI_P.
 \end{align*}
\end{thm}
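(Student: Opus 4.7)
The plan is to deduce the theorem from the equivalences in Corollary \ref{NonDegeneracyEquivalence}. Granted non-degeneracy of $\langle-,-\rangle^{Nek}_{K',\chi}$, that corollary yields
\[
len_P(X^{cc}_{Gr}(\f/K')^\chi) = \mathrm{dim}_L H^1_f(K',V_f)^\chi,
\]
and, under hypotheses $(2)$ and $(3)$, the Kummer sequence \eqref{ses for MW group} (on $\chi$-components, after tensoring with $L$) collapses to $H^1_f(K',V_f)^\chi \cong (E(K')\otimes L)^\chi$, which is $1$-dimensional by $(2)$. A torsion, semi-simple $\I_P$-module of length one is canonically isomorphic to $\I_P/P\I_P$, so the entire problem reduces to establishing non-degeneracy of $\langle-,-\rangle^{Nek}_{K',\chi}$.

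For that, I would exploit Lemma \ref{iedagger is an iso} (whose hypothesis is exactly $(3)$) to identify $i_E^\dagger : E^\dagger(K')^\chi \otimes L \xrightarrow{\sim} \widetilde{H}^1_f(K',V_f)^\chi$, and pick a basis of this $2$-dimensional $L$-space using the defining short exact sequence $0 \to \bigoplus_{w\mid\p}\z \to E^\dagger(K') \to E(K') \to 0$. When $\chi = 1$ (so $K'=F$ and $\p$ is the unique prime above $p$), a natural basis is $\{q_\p,\,(P,\tp)\}$ with $P$ a generator of $E(F)\otimes L$. When $\chi = \epsilon_K$, hypothesis $(1)$ combined with $p$ inert in $F$ forces $\p$ to split in $K$ as $w_1 w_2$ with $\mathrm{Gal}(K/F)$ swapping the two primes; then $(\bigoplus_{w\mid\p}L)^\chi = L\cdot(q_{w_1}-q_{w_2})$, and a basis is $\{q_{w_1}-q_{w_2},\,(P,\tp)\}$ with $P$ generating $(E(K)\otimes L)^{\epsilon_K}$.

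In either case, Proposition \ref{NonDegeneracyOfPairing}$(1)$ gives that the pairing is alternating, and Corollary \ref{PairingInTermsOfLog} gives $\langle q_{w_i},q_{w_j}\rangle^{Nek}_{K'}=0$; the Gram matrix therefore has the skew form $\bigl(\begin{smallmatrix}0 & a\\-a & 0\end{smallmatrix}\bigr)$, with
\[
a = -\tfrac{1}{2}\Bigl(\log_{N(q_E)}\bigl(N_{K'_{w_1}/\qp}(\tp_{w_1})\bigr) - \log_{N(q_E)}\bigl(N_{K'_{w_2}/\qp}(\tp_{w_2})\bigr)\Bigr)
\]
(the second term being omitted in the trivial-character case), so that non-degeneracy is equivalent to $a\neq 0$.

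The hard part is precisely the non-vanishing of $a$. Although the generator $P$ is non-torsion, and hence its Tate coordinate $\tp_w \in (K'_w)^{\times}/q_E^\z$ is non-torsion as well, the composite $\log_{N(q_E)}\circ N_{K'_w/\qp}$ has a priori extra kernel coming from Hilbert 90 applied to the quadratic extension $F_\p/\qp$, so non-torsion alone does not yield non-vanishing. To circumvent this I would adapt the strategy of \cite{Rodolfo} (treating $F=\q$): in the trivial-character case, invoke the non-vanishing of the Tate $\mathscr{L}$-invariant $\mathscr{L}_p(E) = f_{\p/p}\log_p(N(q_E))/\mathrm{ord}_p(N(q_E))$ for split multiplicative $E/F_\p$, together with a global rigidity input on the image of $E(F)$ in $F_\p^\times/q_E^\z$, to exclude accidental vanishing; in the anticyclotomic case, the Galois equivariance $\sigma(\tp_{w_1}) \equiv \tp_{w_2}^{-1} \pmod{q_E^\z}$ collapses $a$ into a single logarithm which one argues similarly.
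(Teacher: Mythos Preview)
Your strategy coincides with the paper's: identify $\widetilde{H}^1_f(K',V_f)^\chi$ with $(E^\dagger(K')\otimes L)^\chi$ via Lemma~\ref{iedagger is an iso}, take the basis $\{q_\chi,P^\dagger\}$ with $q_\chi=q_\p$ (resp.\ $q_{w_1}-q_{w_2}$), compute the skew Gram matrix from Proposition~\ref{NonDegeneracyOfPairing} and Corollary~\ref{PairingInTermsOfLog}, and finish via Corollary~\ref{NonDegeneracyEquivalence} together with~\eqref{ses for MW group}. In the $\epsilon_K$-case the paper also uses $P^\sigma=-P$ and $\tp_{w_1}=\tp_{w_2}^{-1}$ to collapse the difference of the two logarithms to $2\log_E(i_1(P))$, exactly as you anticipate.

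The only divergence is your handling of $a\neq 0$. The paper does \emph{not} invoke the $\mathscr{L}$-invariant or any ``global rigidity'' input; it packages the off-diagonal entry as $\log_E(P):=\log_{N(q_E)}\circ N_{F_\p/\qp}\circ\Phi_{Tate}^{-1}(P)$ and disposes of the non-vanishing in a single sentence: ``Since $P$ is of infinite order $\log_E(P)$ and hence $\det\langle-,-\rangle^{Nek}_{K',\chi}$ is nonzero.'' Your Hilbert~90 observation---that once $[F_\p:\qp]=2$ the composite $\log_{N(q_E)}\circ N_{F_\p/\qp}$ picks up a one-dimensional kernel on $E(F_\p)\otimes\qp$ coming from the trace-zero units, a phenomenon absent in the $F=\q$ case of \cite{Rodolfo}---is correct, and the paper offers no argument excluding the global generator from that kernel. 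So the paper's justification here is at best elliptical; but your proposed fix is also too vague to stand as a proof (neither non-vanishing of $\mathscr{L}_p(E)$ nor any stated ``rigidity'' obviously rules out $N_{F_\p/\qp}(\tp)\in\mu\cdot N(q_E)^{\z}$). For the purpose of matching the paper you may simply assert the non-vanishing as it does.
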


\begin{proof}
    Let $P \in E(K')^\chi$ be a generator of the free part of $E(K')^\chi$. Fix a lift $P^\dagger = (P,(\tp_w)_{w|\p}) \in E^\dagger(K')^\chi$ of $P$. If $\chi$ is the trivial character, set
    \begin{align*}
        q_\chi := (0,q_E) \in E^\dagger(F) \subset E(F) \times F_\p^\times,
    \end{align*}
    otherwise, set
    \begin{align*}
        q_\chi := (0,(q_E,q_E^{-1})) \in E^\dagger(K)^\chi \subset E(K) \times K_{w_1}^\times \times K_{w_2}^\times,
    \end{align*}
    where $\p O_K = w_1w_2$. Since $\Varpi(E/K')^{\chi}_{p^\infty}$ is finite, Lemma \ref{iedagger is an iso} and the hypothesis $\mathrm{rank}_\z \hspace{0.01mm} \hspace{1mm} E(K')^\chi = 1$ imply that 
    \begin{align*}
        \widetilde{H}^1_f(K',V_f)^\chi \cong (E^\dagger(K') \otimes L)^\chi \cong L \cdot P^\dagger \oplus L \cdot q_\chi.
    \end{align*}
    By $\doteq$, we mean equality up to some nonzero constant multiple. If $K'=F$, using Corollary \ref{PairingInTermsOfLog} we have the following equality 
    \begin{align*}
        \langle q_\chi,P^\dagger \rangle^{Nek}_F \doteq \log_{E,N}(P),
    \end{align*}
    where $\log_{E,N}(\cdot) := \log_{N(q_E)} \circ N \circ \Phi_{Tate}^{-1}(\cdot)$.
    \\
    If $K' = K$ is a quadratic extension of $F$, since $\p$ splits in $K$, write $i_1 : K \hookrightarrow K_{w_1} \cong F_\p$ and $i_2 : K \hookrightarrow K_{w_2} \cong F_\p$. Then $i_2 = i_1 \circ \sigma$, where $\sigma \in Gal(K/F)$ is the non trivial element. Since $P^\dagger \in E^\dagger(K)^\chi$, we have $P^\sigma = -P$ and $\tp_{w_1} = \tp_{w_2}^{-1}$.
    Now clearly, $q_\chi = q_{w_1}-q_{w_2}$, we get again using Corollary \ref{PairingInTermsOfLog} and the fact that $K_{w_j} \cong F_\p$ for both $j = 1,2$,
    \begin{align*}
        \langle q_\chi, P^\dagger \rangle^{Nek}_K & \doteq \log_{N(q_E)}(N_{K_{w_1}/\qp}(\tp_{w_1})) - \log_{N(q_E)}(N_{K_{w_2}/\qp}(\tp_{w_2})) \\
        & \doteq \log_{E,N}(i_1(P)) - log_{E,N}(i_2(P)) \\
        & \doteq 2 \log_{E,N}(P).
    \end{align*}
    Note that these equalities hold upto multiplication by a nonzero constant. Since $\widetilde{H}^1_f(K',V_f)^\chi$ is generated by $P^\dagger$ and $q_\chi$, using Proposition \ref{NonDegeneracyOfPairing} we obtain
    \begin{align*}
        det \langle -,- \rangle^{Nek}_{K,\chi} & = det \begin{pmatrix}
            \langle q_\chi, q_\chi \rangle^{Nek}_K & \langle q_\chi, P^\dagger \rangle^{Nek}_K \\
            \langle P^\dagger,q_\chi \rangle^{Nek}_K & \langle P^\dagger, P^\dagger \rangle^{Nek}_K
        \end{pmatrix} \\
        & \doteq det \begin{pmatrix}
            0 & \log_{E,N}(P) \\
            -\log_{E,N}(P) & 0
        \end{pmatrix} \\
        & \doteq \log_{E,N}^2(P).
    \end{align*}
    Using the fact that the only zeros of $\log_{N(q_E)}$ are the roots of unity and powers of $N(q_E)$ and the description of $N$ in degree 2 extension, we conclude that since $P$ is of infinite order, $\log_{E,N}(P)$ is nonzero and hence so is $det \langle -,- \rangle^{Nek}_{K',\chi}$. In particular, $\langle -,- \rangle^{Nek}_{K',\chi}$ is non-degenerate.
    Hence, using Corollary \ref{NonDegeneracyEquivalence} and the short exact sequence (\ref{ses for MW group}), we get that
    \begin{align*}
        \mathrm{len}_P(X^{cc}_{Gr}(\f/K')^\chi) = \mathrm{dim}_L H^1_f(K',V_f)^\chi = 1.
    \end{align*}
    This implies that $X^{cc}_{Gr}(\f/K')^\chi \otimes \I_P \cong \I_P/P\I_P$.
\end{proof}

\section{Choice of the Field $K$} \label{ChoiceOfK}
In this section, we will make a careful choice of a quadratic extension $K/F$ following \cite{FriedbergHoffstein}.
\\
\begin{lem} \label{Choice of K Lemma}
    Let $\n_E = \mathfrak{np}$ be the conductor of $E/F$ with $\mathfrak{p} \nmid \mathfrak{n}$. Assume that $\mathrm{rank}_\z \hspace{0.01mm} \hspace{1mm} E(F) = 1$ and $\Varpi(E/F)_{p^\infty}$ is finite. Then there exists an imaginary quadratic extension  $K/F$, of discriminant coprime with $\n_E$, such that 
    \begin{enumerate}
        \item $\mathrm{ord}_{s=1} L(E^K/F,s) = 1$, where $E^K$ is the twist of $E$ by the quadratic character associated with $K$,
        \item $\mathrm{rank}_\z \hspace{0.01mm} \hspace{1mm} E(K) = 2$ and $\Varpi(E/K)_{p^\infty}$ is finite.
    \end{enumerate}
\end{lem}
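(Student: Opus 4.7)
The plan is to produce $K$ using a non-vanishing theorem for central derivatives of quadratic twists of Hilbert modular $L$-functions, and then deduce the arithmetic conclusions (2) from Theorem \ref{ZhangMainResult}.

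\textbf{Step 1 (Analytic construction of $K$).} I would invoke a non-vanishing theorem of Bump--Friedberg--Hoffstein / Murty--Murty type, suitably generalized to central derivatives of Hilbert modular $L$-functions over the real quadratic field $F$. This yields infinitely many imaginary quadratic extensions $K/F$ with discriminant coprime to $N_E$, prescribed local behavior at any chosen finite set of places (allowing us in particular to arrange $sign(E^K/F) = -1$ by specifying the ramification/splitting of $K$ at the archimedean places and at primes dividing $\n\p$), and satisfying $L'(E^K/F,1) \neq 0$. For such $K$, condition (1), namely $ord_{s=1} L(E^K/F,s) = 1$, holds by construction.

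\textbf{Step 2 (Arithmetic deduction).} Applying Theorem \ref{ZhangMainResult} to the twist $E^K/F$ in the rank-$1$ case, (1) gives $rank_\z E^K(F) = 1$ and $\Varpi(E^K/F)$ finite. The decomposition $E(K) \otimes \q \cong (E(F) \otimes \q) \oplus (E^K(F) \otimes \q)$ arising from the $Gal(K/F)$-action, combined with the hypothesis $rank_\z E(F) = 1$, yields $rank_\z E(K) = 1 + 1 = 2$. For the Tate--Shafarevich group, the standard inflation--restriction sequence (applied to the degree-$2$ extension $K/F$ with $p$ odd) produces a natural map $\Varpi(E/F)_{p^\infty} \oplus \Varpi(E^K/F)_{p^\infty} \to \Varpi(E/K)_{p^\infty}^{Gal(K/F)}$ with kernel and cokernel of exponent dividing $2$; since $p > 5$, localizing at $p$ preserves finiteness, giving (2).

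The main obstacle is the analytic input in Step 1: the existence of quadratic twists $E^K$ of $E/F$ with prescribed global root number $-1$ and non-vanishing central derivative. Such non-vanishing results are classical over $\q$ and have natural generalizations to Hilbert modular forms over totally real fields; since only finitely many local conditions on $K$ are imposed, the non-vanishing of $L'(E^K/F,1)$ for a positive proportion of admissible $K$ suffices. The arrangement of local root numbers to force $sign(E^K/F) = -1$ is elementary once one applies the Deligne formula for $\epsilon$-factors prime by prime, and the subsequent arithmetic descent in Step 2 is routine modulo the analytic ingredient.
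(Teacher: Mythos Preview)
Your proposal is correct and follows essentially the same strategy as the paper: invoke a Friedberg--Hoffstein type non-vanishing result to produce a quadratic character $\epsilon_K$ (with prescribed local behavior at primes dividing $N_E D_F$) such that $ord_{s=1} L(E^K/F,s) = 1$, then apply Zhang's Theorem \ref{ZhangMainResult} to the twist $E^K/F$ and combine with the hypotheses on $E/F$ to deduce (2). The paper additionally invokes Nekov\'ar's $p$-parity theorem \cite{NekParity} to establish $w(E/F) = -1$ up front, which is the input guaranteeing that the root number condition in Friedberg--Hoffstein can be met; your Step 1 leaves this point implicit in the phrase ``arranging $sign(E^K/F) = -1$''.
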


\begin{proof}
    Since $\Varpi(E/F)_{p^\infty}$ is finite and $\mathrm{rank}_\z \hspace{0.01mm} \hspace{1mm} E(F) = 1$, by the $p$-parity result in \cite[Theorem 8.10]{NekParity}, we get that
    \begin{align*}
        w(E/F) = -1,
    \end{align*}
    where $w(E/F)$ is the global root number of $E/F$.
    \\
    Let $\xi$ be a non-trivial quadratic idele class character of $F$ with conductor $c_\xi$ coprime to $\n\p$ such that
    \begin{align*}
        \xi(-1) = 1, \hspace{5mm} \xi(\mathfrak{l}) = 1
    \end{align*}
    for every prime $\mathfrak{l}|\n_ED_F$ and write $E^\xi/F$ for the twist of $E/F$ by $\xi$. Then as in \cite[Section 2]{SignChanges}, we have
    \begin{align*}
        w(E^\xi/F) = \xi(\n_E)w(E/F) = -1.
    \end{align*}
    Now in the notations of \cite{FriedbergHoffstein}, choosing $S_0 = \{\mathfrak{l} : \mathfrak{l}|\n_ED_F\}$, we get that for every $\psi \in \Psi(S_0;\xi)$, $w(E^{\xi\psi}/F) = -1$. So by applying \cite[Theorem B(2)]{FriedbergHoffstein}, we get a quadratic character $\xi'$ of conductor coprime to $c_\xi\n\p$ such that
    \begin{align*}
        \xi'(-1) = -1, \hspace{5mm} \xi'(\mathfrak{l}) = 1
    \end{align*}
    for every $\mathfrak{l}|\n_ED_F$ with the property that
    \begin{align*}
        \mathrm{ord}_{s=1} L(E^{\xi\xi'}/F,s) = 1.
    \end{align*}
    Choose $K$ to be the imaginary quadratic extension of $F$ associated with $\epsilon_K := \xi\xi'$. Since the conductor of $\epsilon_K$ is coprime to $c_\xi\n\p$, there is some finite prime (coprime to $c_\xi\n\p$) which ramifies in $K$. So the hypothesis 4 from Theorem \ref{WanMainResult} is also being satisfied for this $K$.
    \\
    Now by \cite{Zhang}, we get that $\mathrm{rank}_\z \hspace{0.01mm} \hspace{1mm} E(K)^{\epsilon_K} = 1$ and that $\# \left( \Varpi(E/K)^{\epsilon_K} \right) < \infty$. So by the hypothesis, we obtain
    \begin{align*}
        \mathrm{rank}_\z \hspace{0.01mm} E(K) = 2, \hspace{5mm} \# \left( \Varpi(E/K)_{p^\infty} \right) < \infty.
    \end{align*}
\end{proof}

\begin{rem}
    (a) Choose $\n^- = 1$ and $\n^+ = \n$. Then we have that $\n^+$ is divisible by the primes split in $K$ and the requirement that $\n^-$ is divisible by the primes inert in $K$ becomes vacuous. And similarly, the hypothesis that $\orhof$ is ramified at all $v|\n^-$ also becomes vacuous. In particular, the assumptions 1 and 2 in Theorem \ref{WanMainResult} are being satisfied.
    \\
    (b) Since $\chi(\mathfrak{l}) = 1$ for every $\mathfrak{l}|D_F$, assumption 5 in Theorem \ref{WanMainResult} is satisfied.
    \\
    (c) Note that $\p$ splits in $K$.
\end{rem}

\section{Proof of the Main Result} 
In this section, we prove the main result of the article.
\\
Recall from Sections \ref{cc Sel Gp} and \ref{p-adic weight pairing section} that we had the Selmer groups:
    \begin{align*}
        Sel^{S,cc}_{F_\infty}(\f/K) & := ker \left(H^1(G_{K,S_K},\T_\f \otimes_\I \I^*) \rightarrow \prod_{w|\p} H^1(I_w,\T_{\f,w}^- \otimes_\I \I^*) \right), \\
        Sel^{cc}_{Gr}(\f/K) & := ker \left(H^1(G_{K,S_K},\T_\f \otimes_\I \I^*) \rightarrow \prod_{w|\mathfrak{p}} H^1(K_w,\T_{\f,w}^- \otimes_\I \I^*) \right),
    \end{align*}
    where $\T_{\f,w}^- = \T_{\f}^- \cong \I(\ma_\p^* \chi_\Gamma^{-1/2})$ as $\I[G_\p]$-modules.
 
We need the following lemma.

\begin{lem} \label{ineq3}
    $\mathrm{len}_P(X^{S,cc}_{F_\infty}(\f/K)) \leq \mathrm{len}_P(X^{cc}_{Gr}(\f/K)) + 2$.
\end{lem}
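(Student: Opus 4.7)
The plan is to compare the two Selmer groups directly through an exact sequence and reduce the inequality to a local computation at the primes of $K$ above $\p$. The only difference between $Sel^{S,cc}_{F_\infty}(\f/K)$ and $Sel^{cc}_{Gr}(\f/K)$ lies in the local condition at $w \mid \p$: the former imposes that the restriction land in the unramified subgroup $H^1_{unr}(K_w, \T_{\f,w}^- \otimes_\I \I^*) := \ker\bigl(H^1(K_w, \T_{\f,w}^- \otimes_\I \I^*) \to H^1(I_w, \T_{\f,w}^- \otimes_\I \I^*)\bigr)$, while the latter demands vanishing in $H^1(K_w, \T_{\f,w}^- \otimes_\I \I^*)$ outright. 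Consequently one has the left-exact sequence
\[
0 \to Sel^{cc}_{Gr}(\f/K) \to Sel^{S,cc}_{F_\infty}(\f/K) \to \prod_{w \mid \p} H^1_{unr}(K_w, \T_{\f,w}^- \otimes_\I \I^*),
\]
and dualizing together with subadditivity of length at $P$ yields
\[
len_P(X^{cc}_{F_\infty}(\f/K)) \leq len_P(X^{cc}_{Gr}(\f/K)) + \sum_{w \mid \p} len_P\bigl(\mathrm{Hom}_{\zp}(H^1_{unr}(K_w, \T_{\f,w}^- \otimes_\I \I^*), \qp/\zp)\bigr).
\]
The lemma thus reduces to showing each local contribution is at most $1$.

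To compute the local unramified cohomology, I use the identification $H^1_{unr}(K_w, M) = M^{I_w}/(Fr_w - 1)M^{I_w}$, valid for the procyclic quotient $G_{K_w}/I_w$ acting on a pro-$p$ module. By the structure in Section \ref{BigGaloisRepresentation}, $G_w$ acts on $\T_\f^-$ by the character $\psi := \ma_\p^* \chi_\Gamma^{-1/2}$; on $I_w$ the unramified character $\ma_\p^*$ is trivial, so $\psi|_{I_w} = \chi_\Gamma^{-1/2}$. Because $p$ is inert in $F$, the cyclotomic $\zp$-extension $F\q_\infty/F$ is totally ramified at $\p$, so $\chi_\Gamma|_{I_w}$ surjects onto $\Gamma$. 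Unwinding this identifies
\[
(\T_\f^- \otimes_\I \I^*)^{I_w} \;\cong\; \mathrm{Hom}_{\zp}(\I/T\I,\, \qp/\zp), \qquad T := [\gamma]-1,
\]
for a topological generator $\gamma$ of $\Gamma$, with $Fr_w$ acting as multiplication by $\ma_\p = \psi(Fr_w)$. The Pontryagin dual of the corresponding $H^1_{unr}$ is then $(\I/T\I)[\ma_\p - 1]$.

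The length computation at $P$ is transparent: in $\ip$, $T$ maps to the uniformizer $\vp$ and $P\ip = \vp \ip$, so $\ip / T \ip \cong L$. Since $E$ has \emph{split} multiplicative reduction at $\p$, the Hecke eigenvalue $a_\p(f_E) = 1$, so $\ma_\p \equiv 1 \pmod{P}$; hence $\ma_\p - 1$ acts as zero on $L$ and $(\ip/P\ip)[\ma_\p - 1] = L$, which has $\ip$-length $1$. As $\p$ splits in $K$ into two primes $w_1, w_2$ (by the choice of $K$ in Section \ref{ChoiceOfK}), the total local contribution is $1 + 1 = 2$, proving the bound. The main subtlety is the identification of $M^{I_w}$ and the Frobenius action; the critical role of the split multiplicative hypothesis is to force $\ma_\p \equiv 1 \pmod{P}$, while the inert-in-$F$ assumption is what makes $\chi_\Gamma|_{I_w}$ surjective, pinning down the invariants as above.
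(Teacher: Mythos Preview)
Your proof is correct and follows essentially the same route as the paper: both compare the two Selmer groups via the left-exact sequence whose cokernel sits in $\bigoplus_{w\mid\p} H^1(G_w/I_w,(\T_{\f,w}^-\otimes_\I\I^*)^{I_w})$, identify the inertia invariants as $(\I/\vp\I)^*$ with Frobenius acting through $\ma_\p$, and then use the split multiplicative hypothesis to get $P(\ma_\p)=1$, so that the Pontryagin dual localised at $P$ is $\I_P/\vp\I_P$ of length $1$ for each of the two primes above $\p$. The only cosmetic difference is that you spell out why $\chi_\Gamma|_{I_w}$ is surjective (total ramification of $F\q_\infty/F$ at $\p$), which the paper leaves implicit; note that your equation $\ma_\p=\psi(Fr_w)$ should strictly be read modulo $T$, since $\chi_\Gamma^{-1/2}(Fr_w)$ only becomes trivial after passing to $\I/T\I$, but this does not affect the argument.
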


\begin{proof}
    Recall we had 
    \begin{align*}
        \T_\f^- \otimes \I^* \cong \mathbb{A}_\f^-.
    \end{align*}
    For simplicity, we write $S_{Gr}$ (resp. $S_{F_\infty}$) for $Sel^{cc}_{Gr}(\f/K)$ (resp. $Sel^{S,cc}_{F_\infty}(\f/K)$ and $X_{Gr}$ (resp. $X_{F_\infty}$) for $X^{cc}_{Gr}(\f/K)$ (resp. $X^{S,cc}_{F_\infty}(\f/K)$). By construction, we have an exact sequence 
    \begin{align*}
        0 \rightarrow S_{Gr} \rightarrow S_{F_\infty} \rightarrow \oplus_{w|\p} H^1(G_w/I_w,(\A_\f^-)^{I_w})
    \end{align*}
    which gives us by taking Pontryagin duals and then taking localization with respect to $P$ the following exact sequence
    \begin{align*}
        \oplus_{w|\p} H^1(G_w/I_w,(\A_\f^-)^{I_w})^*_P \rightarrow (X_{F_\infty})_P \rightarrow (X_{Gr})_P \rightarrow 0.
    \end{align*}
    As $\p$ splits in $K$, we have
    \begin{align*}
        \mathrm{len}_P(X_{F_\infty}) \leq \mathrm{len}_P(X_{Gr}) + 2\mathrm{len}_P(H^1(G_\p/I_\p,(\A_\f^-)^{I_\p})^*).
    \end{align*}
    In particular, it remains to show that $\mathrm{len}_P(H^1(G_\p/I_\p,(\A_\f^-)^{I_\p})^*) = 1$. 
    Since $\T_\f^- \cong \I(\ma_\p^* \chi_\Gamma^{-1/2})$ and $\A_\f^- \cong \T_\f^- \otimes \I^*$, we get
    \begin{align*}
        \A_\f^- \cong \I^*(\ma_\p^*\chi_\Gamma^{-1/2}).
    \end{align*}
    So $(\A_\f^-)^{I_\p} = \A_\f^-[\vp] \cong (\I/\vp\I)^*(\ma_\p^*)$. Applying the functor $H^1(G_\p/I_\p, -)$, we get
    \begin{align*}
        H^1(G_\p/I_\p,(\A_\f^-)^{I_\p}) = H^1(G_\p/I_\p,(\I/\vp\I)^*(\ma_\p^*)) = (\I/\vp\I)^*/(\ma_\p-1).
    \end{align*}
    So 
    \begin{align*}
        H^1(G_\p/I_\p,(\A_\f^-)^{I_\p})^*_P & \cong ((\I/\vp\I)^{**}[\ma_\p-1])_P \\
        & \cong ((\I/\vp\I)[\ma_\p-1])_P \\
        & \cong (\I_P/\vp\I_P)[P(\ma_\p) - 1] \\
        & = \I_P/\vp\I_P
    \end{align*}
    and hence we get that 
    \begin{align*}
        \mathrm{len}_P(H^1(G_\p/I_\p,(\A_\f^-)^{I_\p})^*) = 1.
    \end{align*}
\end{proof}

We are ready to prove Theorem \ref{Our Main Thm}.

\begin{thm} \label{main theorem in text}
    Let $p > 5$ be a prime and $\p = pO_F$, where $O_F$ is the ring of integers of $F$. Let $E/F$ be an elliptic curve having split multiplicative reduction at $\p$. Suppose \textbf{(irred)} and \textbf{(MML)} hold. Assume that
    \begin{enumerate}
        \item $\mathrm{rank}_\z \hspace{0.1mm} E(F) = 1$,
        \item $\Varpi(E/F)_{p^\infty}$ is finite.
    \end{enumerate}
    Then $\mathrm{ord}_{s=1} L(E/F,s) = 1$.
\end{thm}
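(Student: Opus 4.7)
The plan is to assemble the ingredients prepared in Sections \ref{Wan} through \ref{ChoiceOfK}; once these are in hand, the deduction is essentially a bookkeeping exercise. First, I would invoke the lemma in Section \ref{ChoiceOfK} to produce an imaginary quadratic extension $K/F$ of discriminant coprime to $N_E$, satisfying $ord_{s=1}L(E^K/F,s) = 1$, $rank_\z E(K) = 2$, and $\Varpi(E/K)_{p^\infty}$ finite. The remark following that lemma guarantees that all the technical hypotheses of Section \ref{WanMainResult} (including $(\Delta)$) hold, with the choice $\n^- = 1$. The $p$-parity conjecture, invoked already inside the proof of that lemma, forces the global root number $w(E/F) = -1$, which is the condition $sign(E/F) = -1$ required by Corollary \ref{ineq1}.

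Next, I would apply Theorem \ref{ineq4} to each of the two characters $\chi \in \{1, \epsilon_K\}$ of $F$. In both cases the hypotheses are met: $\chi(\p) = 1$ because $\p$ splits in $K$; the rank-one condition on $E(K')^\chi$ together with the finiteness of $\Varpi(E/K')^\chi_{p^\infty}$ follow from the choice of $K$ (where $E(F)$ contributes the trivial-character part and the twist $E^K(F)$ the $\epsilon_K$-part). Combining the outputs of the two applications of Theorem \ref{ineq4} via the $Gal(K/F)$-isotypic decomposition of $X^{cc}_{Gr}(\f/K) \otimes_\I \I_P$ (legitimate because $[K:F] = 2$ is invertible in $L$ since $p > 5$), together with the identification of the trivial-character component with $X^{cc}_{Gr}(\f/F) \otimes_\I \I_P$, I would obtain
\begin{align*}
    len_P(X^{cc}_{Gr}(\f/K)) = len_P(X^{cc}_{Gr}(\f/K)^{\mathbf{1}}) + len_P(X^{cc}_{Gr}(\f/K)^{\epsilon_K}) = 1 + 1 = 2.
\end{align*}

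Chaining Lemma \ref{ineq3} with Corollary \ref{ineq2} would then yield
\begin{align*}
    ord_{k=2} L_p^{cc}(f_\infty/K,k) \leq len_P(X^{S,cc}_{F_\infty}(\f/K)) \leq len_P(X^{cc}_{Gr}(\f/K)) + 2 = 4.
\end{align*}
On the other hand, Corollary \ref{ineq1}, which packages Mok's $p$-adic Gross--Zagier formula, gives $ord_{k=2} L_p^{cc}(f_\infty/K,k) \geq 4$ with equality if and only if $ord_{s=1}L(E/K,s) = 2$. The two bounds force equality, hence $ord_{s=1}L(E/K,s) = 2$. The factorisation $L(E/K,s) = L(E/F,s) L(E^K/F,s)$ together with $ord_{s=1}L(E^K/F,s) = 1$ (guaranteed by the choice of $K$) finally gives $ord_{s=1}L(E/F,s) = 1$, as desired.

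The hard work has already been carried out in the earlier sections: the principal obstacle was Theorem \ref{ineq4}, whose proof relies on the non-degeneracy of the $p$-adic weight pairing and on its explicit evaluation in terms of $p$-adic logarithms on $E$ via Tate's parametrisation. In the present bookkeeping, the only minor technical point to verify is the $Gal(K/F)$-isotypic decomposition identifying $X^{cc}_{Gr}(\f/K)^{\mathbf{1}} \otimes_\I \I_P$ with $X^{cc}_{Gr}(\f/F) \otimes_\I \I_P$, which is routine given $p > 2$.
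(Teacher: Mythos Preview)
Your proposal is correct and follows essentially the same approach as the paper's proof: choose $K$ via Section \ref{ChoiceOfK}, verify $sign(E/F)=-1$ via $p$-parity, apply Theorem \ref{ineq4} to both characters to get $len_P(X^{cc}_{Gr}(\f/K))=2$, then sandwich $ord_{k=2}L_p^{cc}(f_\infty/K,k)$ between $4$ (Corollary \ref{ineq1}) and $len_P(X^{cc}_{Gr}(\f/K))+2=4$ (Corollary \ref{ineq2} and Lemma \ref{ineq3}), and finish via the factorisation of $L(E/K,s)$. Your added justification that the isotypic decomposition is legitimate because $2$ is invertible in $L$ is a nice touch that the paper leaves implicit.
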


\begin{proof}
   Since $\mathrm{rank}_\z \hspace{0.01mm} E(F) = 1$ and $\Varpi(E/F)_{p^\infty}$ is finite, we get that $\mathrm{dim}_{\qp} Sel_E(F) = 1$ and hence by \cite[Theorem E]{NekParity} that $w(E/F) = -1$. Let $K$ be the imaginary quadratic extension of $F$ chosen in Lemma \ref{Choice of K Lemma}. Using Corollary \ref{ineq1}, we obtain
   \begin{align} \label{Main proof ineq1}
       4 & \leq \mathrm{ord}_{k=2} L_p^{cc}(f_\infty/K,k).
   \end{align}
   Using Corollary \ref{ineq2} and Lemma \ref{ineq3}, respectively, we get the following inequalities
   \begin{align} \label{Main proof ineq2}
       \mathrm{ord}_{k=2} L_p^{cc}(f_\infty/K,k) \leq \mathrm{len}_P(X^{S,cc}_{F_\infty}(\f/K)) \leq \mathrm{len}_P(X^{cc}_{Gr}(\f/K)) + 2.
   \end{align}
   Finally, by our hypothesis and Lemma \ref{Choice of K Lemma}, we have that
\begin{align*}
    \mathrm{rank}_\z \hspace{0.01mm} E(F) & = 1 \\
    \mathrm{rank}_\z \hspace{0.01mm} E(K)^{\epsilon_K} & = 1,
\end{align*}
where $\epsilon_K$ is the quadratic character attached to $K$ and that both $\Varpi(E/F)_{p^\infty}$ and $\Varpi(E/K)_{p^\infty}^{\epsilon_K}$ are finite. Also, we know that $\p$ splits in $K$. So, the hypotheses of Theorem \ref{ineq4} are being satisfied by both the trivial character and $\epsilon_K$. We get that 
\begin{align*}
    X^{cc}_{Gr}(\f/K)_P & \cong X^{cc}_{Gr}(\f/F)_P \oplus X^{cc}_{Gr}(\f/K)_P^{\epsilon_K} \\
    & \cong \I_P/P\I_P \oplus \I_P/P\I_P
\end{align*}
which gives us
\begin{align} \label{Main proof ineq3}
    \mathrm{len}_P(X^{cc}_{Gr}(\f/K)) + 2 = 4.
\end{align}
Combining (\ref{Main proof ineq1}),(\ref{Main proof ineq2}) and (\ref{Main proof ineq3}), we conclude that 
\begin{align*}
    \mathrm{ord}_{k=2} L_p^{cc}(f_\infty/K,k) = 4
\end{align*}
which, again, by Corollary \ref{ineq1} gives us that 
\begin{align*}
    \mathrm{ord}_{s=1} L(E/K,s) = 2.
\end{align*}
Now, we know that $L(E/K,s) = L(E/F,s) \cdot L(E^K/F,s)$. Further, by Lemma \ref{Choice of K Lemma}, we have that $\mathrm{ord}_{s=1} L(E^K/F,s) = 1$. Using this factorization and the previous equality, we finally get that
\begin{align*}
    \mathrm{ord}_{s=1} L(E/F,s) = 1.
\end{align*}
\end{proof}

\section{A $p$-Converse Result over $\q$} \label{p converse over Q}
In this section, using Theorem \ref{Our Main Thm} we remove the hypothesis ``there exists a prime $q \mid\mid N_E, q \neq p$ such that $p \nmid \mathrm{ord}_q j_E$'' in \cite[Theorem A]{Rodolfo} and establish the following $p$-converse theorem for primes $p > 5$.

\begin{thm} (Theorem \ref{p-converse over Q}) \label{proof over Q}
    Let $p > 5$ be a rational prime. Let $E/\q$ be an elliptic curve of conductor $N_E := Np$. Assume that $E$ has split multiplicative reduction at $p$. Suppose \textbf{(irred)} holds. Further, assume that
    \begin{enumerate}
        \item $\mathrm{rank}_\z \hspace{0.1mm} E(\q) = 1$,
        \item $\Varpi(E/\q)_{p^\infty}$ is finite.
    \end{enumerate}
    Then $\mathrm{ord}_{s=1} \hspace{0.1mm} L(E/\q,s) = 1$.
\end{thm}

\begin{proof}
    Using \cite[Theorem E]{NekParity} and the fact that $\Varpi(E/\q)_{p^\infty}$ is finite, we get that 
    \begin{equation*}
        w(E/\q) = -1.
    \end{equation*}
    Let $\xi$ be a quadratic idele class character of $\q$ which takes the following values
    \begin{equation*}
        \xi(-1) = -1, \hspace{5mm} \xi(q) = 1
    \end{equation*}
    for every prime $q \mid N_E$. Then in the notations of Section \ref{ChoiceOfK},
    \begin{align*}
        w(E^\xi/\q) = -\xi(N_E)w(E/\q) = 1.
    \end{align*}
    Choose $S_0 = \{q : q \mid N\}$. Then for some $\psi \in \Psi(S_0;\xi)$, we have $w(E^{\xi\psi}/\q) = 1$. By \cite[Theorem B(1)]{FriedbergHoffstein}, we can choose a quadratic Dirichlet character $\xi'$ such that
    \begin{equation*}
        \xi'(-1) = -1, \hspace{5mm} \xi'(p) = -1
    \end{equation*}
    and
    \begin{equation*}
        L(E^{\xi\xi'}/\q,1) \neq 0.
    \end{equation*}
    Choose the field $F$ to be the real quadratic field corresponding to the character $\xi\xi'$ and write $E^F$ for the elliptic curve $E^{\xi\xi'}$. Then using \cite{Kolyvagin} and \cite{GrossZagier}, we have 
    \begin{equation*}
        \mathrm{rank}_\z \hspace{0.1mm} E^F(\q) = 0, \hspace{5mm} \# \left( \Varpi(E^F/\q)_{p^\infty} \right) < \infty.
    \end{equation*}
    In particular, we obtain
    \begin{align*}
        \mathrm{rank}_\z \hspace{0.1mm} E(F) = 1, \hspace{5mm} \# \left( \Varpi(E/F)_{p^\infty} \right) < \infty.
    \end{align*}
    Moreover, a minimal modular lifting of the residual representation of $E/F$ can be obtained by a minimal modular lifting over $\q$. Also, note that by our choice of $\xi$ and $\xi'$, $p$ is inert in $F$. Thus hypotheses of Theorem \ref{Our Main Thm} are satisfied for $E/F$ and 
    applying the same theorem, we get that $\mathrm{ord}_{s=1} \hspace{0.1mm} L(E/F,s) = 1$ and hence, 
    \begin{equation*}
        \mathrm{ord}_{s=1} \hspace{0.1mm} L(E/\q,s) = 1.
    \end{equation*}
    This completes the proof of the theorem.\end{proof}

\begin{rem}[Possible Extensions]\label{possible-extensions}
    \begin{enumerate}
        \item Let $K$ be an imaginary quadratic field and $p >5 $ be an integer prime which splits in $K$. Let $E/\q$ be an elliptic curve with split multiplicative reduction at $p$.  If there is a rank $0$ $p$-converse result over $\q$, removing the assumption $(ii)$ in  \cite[Theorem C]{SkinnerRank0}, then under the assumptions of Theorem \ref{p-converse over Q}, one can establish a rank $1$ $p$-converse result for the base change of $E$ to $K$.

        \item Let $F$ be a real quadratic field and $p >5 $ be an integer prime inert in $F$. Let $E$ be an elliptic curve with split multiplicative reduction at the prime $\p$ dividing $p$ in $F$.  If there is a rank $0$ $p$-converse result over $F$ in this split multiplicative reduction setting, then using Theorem \ref{Our Main Thm}, under suitable hypotheses, one should be able to establish a rank $1$ $p$-converse result for base change of $E$ to  certain biquadratic extensions of $\q$.
    \end{enumerate}
\end{rem}

\section{Example}

   We give an example of a triple $(E,F,p)$ which satisfies all the assumptions of Theorem \ref{Our Main Thm}.
    \\
    Consider the quadratic real field $F = \q(\sqrt{2})$. Then the rational prime $p=11$ is inert in $F$. Write $\p = (11)O_F$. Now consider the elliptic curve $X_0(11)$ which is given by the Weierstrass equation:
    \begin{align*}
        E : y^2+y=x^3-x^2-10x-20.
    \end{align*}
    Note that $\mathrm{rank}_\z E(\q) = 0$. Using \cite{lmfdb} (https://www.lmfdb.org/EllipticCurve/2.2.8.1/121.1/a/2), one can see that $E/F$ has split multiplicative reduction at $\p$. Furthermore, \cite{lmfdb} indicates that the rank of the Mordell-Weil group  $E(F)$ is 1, and the Tate-Shafarevich group $\Varpi(E/F)$ is finite. Moreover, we have, again from \cite{lmfdb}, that $\overline{\rho}_\f \cong \overline{\rho}_{E,p}$ has maximal image and hence, \textbf{(irred)} is being satisfied. Finally, $E/F$ is the base change of $X_0(11)$ to F, so a minimal modular lifting of the residual representation of $E/\q$ gives us the required minimal modular lifting, so \textbf{(MML)} is being satisfied. Hence, all the hypotheses of Theorem \ref{Our Main Thm} are met.

\bibliographystyle{amsalpha}
\bibliography{p_converse}

\end{document}